\DeclareMathOperator*{\wtend}{\rightharpoonup}
\theoremstyle{definition}
\newtheorem{defin}{Definition}[section]
\newtheorem{ex}[defin]{Example}
\newtheorem{rmk}[defin]{Remark}
\theoremstyle{plane}
\newtheorem{thm}[defin]{Theorem}
\newtheorem{prop}[defin]{Proposition}
\newtheorem{cor}[defin]{Corollary}
\newtheorem{lemma}[defin]{Lemma}
\newcommand{\tbf}{\textbf}
\newcommand{\tsl}{\textsl}
\newcommand{\mbb}{\mathbb}
\newcommand{\mc}{\mathcal}
\newcommand{\mf}{\mathfrak}
\newcommand{\mds}{\mathds}
\newcommand{\veps}{\varepsilon}
\newcommand{\s}{\sigma}
\renewcommand{\t}{\tau}
\newcommand{\de}{\delta}
\renewcommand{\o}{\omega}
\newcommand{\R}{\mathbb{R}}
\newcommand{\N}{\mathbb{N}}
\newcommand{\T}{\mathbb{T}}
\renewcommand{\div}{{\rm div}\,}
\newcommand{\curl}{{\rm curl}\,}
\newcommand{\supp}{{\rm supp}\,}
\newcommand{\dx}{ \, {\rm d} x}
\newcommand{\dt}{ \, {\rm d} t}
\newcommand{\loc}{{\rm loc}}
\def\d{\partial}
\def\div{{\rm div}\,}
\newcommand{\dd}{{\rm d}}
\newcommand{\fra}[1]{\textcolor{blue}{#1}}
\begin{document}

\title{\textsc{\Large{\textbf{Yudovich theory under geometric regularity for density-dependent incompressible fluids}}}}

\author{\normalsize \textsl{Francesco Fanelli}$\,^{1,2,3}$ 
\vspace{.5cm} \\
\footnotesize{$\,^{1}\;$ \textsc{BCAM -- Basque Center for Applied Mathematics}} \\ 
{\footnotesize Alameda de Mazarredo 14, E-48009 Bilbao, Basque Country, SPAIN} \vspace{.2cm} \\
\footnotesize{$\,^{2}\;$ \textsc{Ikerbasque -- Basque Foundation for Science}} \\  
{\footnotesize Plaza Euskadi 5, E-48009 Bilbao, Basque Country, SPAIN} \vspace{.2cm} \\
\footnotesize{$\,^{3}\;$ \textsc{Universit\'e Claude Bernard Lyon 1}, {\it Institut Camille Jordan -- UMR 5208}} \\ 
{\footnotesize 43 blvd. du 11 novembre 1918, F-69622 Villeurbanne cedex, FRANCE} \vspace{.3cm} \\
%
%
\footnotesize{Email address: \ttfamily{ffanelli@bcamath.org}}
\vspace{.2cm}
}

\date\today

\maketitle

\subsubsection*{Abstract}
{\footnotesize 

This paper focuses on the study of the density-dependent incompressible Euler equations in space dimension
$d=2$, for low regularity (\textsl{i.e.} non-Lipschitz) initial data satisfying assumptions
in spirit of the celebrated Yudovich theory for the classical homogeneous Euler equations.


We show that, under an \textsl{a priori} control of a non-linear geometric quantity, namely the directional derivative
$\d_Xu$ of the fluid velocity $u$ along the vector field $X:=\nabla^\perp\rho$, where $\rho$ is the fluid density,
low regularity solutions \textsl{\`a la Yudovich} can be constructed also in the non-homogeneous setting.
More precisely, we prove the following facts:
\begin{enumerate}[(i)]
 \item \emph{stability}: given a sequence of smooth approximate solutions enjoying a uniform control on the above mentioned geometric quantity,
then (up to an extraction) that sequence converges to a Yudovich-type solution of the density-dependent incompressible Euler system;
\item \emph{uniqueness}: there exists at most one Yudovich-type solution of the density-dependent incompressible Euler equations
such that $\d_Xu$ remains finite; besides, this statement improves previous uniqueness results for regular solutions,
inasmuch as it requires less smoothness on the initial data.
\end{enumerate}


}

\paragraph*{\small 2020 Mathematics Subject Classification:}{\footnotesize 35Q31 
(primary);
35R05, 
76B03, 
35A02. 
(secondary).}

\paragraph*{\small Keywords: }{\footnotesize incompressible Euler equations; density variations; Yudovich theory; geometric regularity;
stability and convergence; uniqueness.
}

%
%

\section{Introduction} \label{s:intro}

The incompressible Euler equations are one of the most iconic mathematical models arising in fluid mechanics. They describe the evolution
of the velocity field $u$ of an incompressible ideal fluid: in absence of external forces (for simplicity) and after denoting by $\Pi$ the pressure field
of the fluid, they take the form
\begin{equation} \label{eq:hom-E}
\left\{\begin{array}{l}
        \d_tu\,+\,(u\cdot\nabla)u\,+\,\nabla\Pi\,=\,0 \\[1ex]
        \div u\,=\,0\,.
       \end{array}
\right.
\end{equation}
Ideal fluid, in this context, means that the fluid is assumed to be inviscid (no friction forces act
between molecules of the fluid) and homogeneous, \tsl{i.e.} with constant density.

In this paper, we are interested in the generalisation of the above system in presence of variations of the density of the fluid. If we introduce a positive scalar
function $\rho\geq 0$ to describe the evolution of the fluid density, assuming that friction forces are still negligible,
the resulting system, known as the \emph{non-homogeneous incompressible Euler system}, becomes
\begin{equation} \label{eq:dd-E}
\left\{\begin{array}{l}
\d_t\rho\,+\,u\cdot\nabla\rho\,=\,0 \\[1ex]
\rho\,\d_tu\,+\,\rho\,(u\cdot\nabla) u\,+\,\nabla\Pi\,=\,0 \\[1ex]
\div u\,=\,0\,.
       \end{array}
\right.
\end{equation}
All the (scalar and vector) fields $\rho$, $u$ and $\Pi$ are assumed to be functions of time $t$ and space $x$. As we are interested in
the initial value problem associated to \eqref{eq:dd-E}, we set the equations for $(t,x)\in\R_+\times \Omega$, where, for the time being, $\Omega$ is a domain
of $\R^d$, with $d\geq2$.
We supplement the equations with the initial datum 
\begin{equation} \label{eq:in-datum}
 \big(\rho,u\big)_{|t=0}\,=\,\big(\rho_0,u_0\big)\,,\qquad\qquad \mbox{ with }\qquad 
 \div u_0\,=\,0\,.
\end{equation}
In addition, throughout this work we assume \emph{absence of vacuum}. This means that there exist two positive constants $0<\rho_*\leq \rho^*$ such that
\begin{equation} \label{eq:vacuum}
 0<\rho_*\leq\rho_0\leq\rho^*\,.
\end{equation}

System \eqref{eq:dd-E} retains a strong similarity with its homogeneous ancestor \eqref{eq:hom-E}, at least under assumption \eqref{eq:vacuum}.
For instance, both systems can be seen (neglecting non-local effects introduced by the pressure term) as a quasi-linear system
of transport equations.
Despite these premises, from a purely mathematical standpoint, there is a huge gap between the developed theory for the classical
incompressible Euler equations \eqref{eq:hom-E} and the results established so far for their non-homogeneous counterpart \eqref{eq:dd-E}.
Such a gap is particularly evident in the two-dimensional case: when $d=2$, celebrated results like global well-posedness and Yudovich theory of
weak solutions, well-known for equations \eqref{eq:hom-E}, are still largely open in presence of density variations.

The goal of the present paper is precisely to address the question of the validity of a \emph{weak solutions theory}  \tsl{\`a la Yudovich} for the non-homogeneous
incompressible Euler equations \eqref{eq:dd-E}. By this, we mean solutions, weak at the level of the vorticity formulation, such that the vorticity possesses only
$L^p$-type bounds, with no additional regularity; in particular, discontinuous vorticities are allowed and the regularity of the velocity field remains below
the Lipschitz threshold.
Thus, troughout this work we will set $d=2$ and consider equations \eqref{eq:dd-E} on
\[
 \R_+\times\Omega\,,\qquad\qquad \mbox{ with }\qquad \Omega\,=\,\R^2\,,
\]
even though the case of the two-dimensional torus $\Omega\,=\,\T^2$ can be dealt with as well, with minor modifications.

Let us immediately clarify that we are \emph{not} able to prove existence and uniqueness of solutions in a framework \tsl{\`a la Yudovich}. However, our study
highlights the crucial role played in this matter by a non-linear geometric quantity, recently introduced in \cite{F_2025}, which
consists of the directional derivative $\d_Xu$ of the velocity field $u$ along the direction $X:=\nabla^\perp\rho$ tangent to the level curves of the density.
Assuming \tsl{a priori} a control on such quantity, we are able to perform a stability analysis of smooth approximate solutions towards Yudovich-type solutions
to \eqref{eq:dd-E}, as well as to establish uniqueness of those solutions for which, in addition, $\d_Xu$ remains finite.
In passing, we notice that the uniqueness statement improves existing uniqueness results for regular solutions, inasmuch as it holds true under much less stringent
assumptions.

Before going into the core of the matter, let us briefly revisit the classical theory for the constant density case.

\subsection{Yudovich theory for the homogeneous Euler equations} \label{ss:yud}

Owing to the divergence-free condition, the incompressible Euler equations \eqref{eq:hom-E} can be equivalently formulated in terms of the vorticity
of the fluid, which is represented by the quantity $\Omega\,=\,Du-\nabla u$, where $Du$ is the Jacobian matrix of $u$ and $\nabla u$ its transpose.
In the case of space dimension $d=2$, the vorticity matrix can be identified with the scalar function
\begin{equation} \label{def:omega}
 \o\,:=\,\curl u\,=\,\d_1u^2\,-\,\d_2u^1\,.
\end{equation}
When computing the evolution equation for $\o$, a key cancellation (specific of the case $d=2$) occurs in the non-linear term, and the vorticity equation
reduces to a simple transport equation:
\begin{equation} \label{eq:hom-vort}
 \d_t\o\,+\,u\cdot\nabla\o\,=\,0\,.
\end{equation}
Notice that the velocity field $u$ can be recovered in terms of $\o$ by solving the Biot-Savart law
\begin{equation} \label{eq:BS-law}
 u\,=\,-\,\nabla^\perp(-\Delta)^{-1}\o\,=\,\frac{1}{2\pi}\,\int_{\R^2}\frac{1}{|x-y|^2}\,(x-y)^\perp\,\o(y)\,\dd y\,,
\end{equation}
where, given any vector $v\in\R^2$, $v\,=\,\big(v^1,v^2\big)$, we have set $v^\perp\,=\,\big(-v^2,v^1\big)$. 

Equation \eqref{eq:hom-vort} endows the Euler system with an infinite number of conservation laws, as (at least formally) all the $L^p$ norms of $\o$
are preserved by the evolution, for any $p\in[1,+\infty]$:
\begin{equation} \label{est:vort-hom}
\forall\,p\in[1,+\infty]\,,\quad \forall\,t\geq0\,,\qquad\qquad \left\|\o(t)\right\|_{L^p}\,\leq\,\left\|\o_0\right\|_{L^p}\,.
\end{equation}
In passing, we recall that this is the key to get global well-posedness for the $2$-D Euler equations
(see \tsl{e.g.} classical textbooks like \cite{Marc-Pulv}, \cite{Maj-Bert} and \cite{BCD} for a review of the known results).

In 1963, Yudovich \cite{Yud_1963} used the simple form of equation \eqref{eq:hom-vort} to set up a weak solutions theory to the incompressible
Euler equations in $2$-D, guaranteeing both existence and \emph{uniqueness} properties. Yudovich theorem can be stated in the following form
(see \tsl{e.g.} Theorems 8.1 and 8.2 of \cite{Maj-Bert}).

\begin{thm}
Given any  initial vorticity $\o_0\in L^1(\R^2)\cap L^\infty(\R^2)$, there exists a unique weak solution $\o$ to system \eqref{eq:hom-vort}-\eqref{eq:BS-law},
defined globally in time and such that $\o\in L^\infty\big(\R_+;L^1(\R^2)\cap L^\infty(\R^2)\big)$.
\end{thm}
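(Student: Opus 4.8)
The plan is to establish existence and uniqueness separately, both built on the conservation law \eqref{est:vort-hom} and on the elliptic gain encoded in the Biot--Savart law \eqref{eq:BS-law}. For existence, I would first regularise the datum: pick smooth, rapidly decaying $\o_0^n$ with $\o_0^n\to\o_0$ in every $L^p$, $p<+\infty$, and weakly-$*$ in $L^\infty$, keeping $\|\o_0^n\|_{L^1\cap L^\infty}\leq\|\o_0\|_{L^1\cap L^\infty}$. For each $n$, classical theory yields a unique global smooth solution $\o^n$ of \eqref{eq:hom-vort}, with velocity $u^n$ given by \eqref{eq:BS-law}. Since $\o^n$ is transported by the measure-preserving flow of $u^n$, estimate \eqref{est:vort-hom} holds uniformly in $n$ for all $p\in[1,+\infty]$. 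The decisive structural input is then that the Biot--Savart kernel converts these bounds into a uniform $L^\infty$ bound on $u^n$ and a uniform \emph{log-Lipschitz} spatial modulus of continuity, while the equation provides equicontinuity in time.

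These facts, through an Arzel\`a--Ascoli argument on compact subsets of $\R_+\times\R^2$ together with a diagonal extraction, would deliver a subsequence with $u^n\to u$ locally uniformly and $\o^n\rightharpoonup\o$ weakly-$*$ in $L^\infty$. I would then pass to the limit in the weak formulation of \eqref{eq:hom-vort}: the linear terms converge by weak-$*$ convergence, while the nonlinear term $\o^n\,u^n$ converges because the strong local-uniform convergence of $u^n$ compensates the merely weak convergence of $\o^n$; the Biot--Savart relation survives by continuity of the convolution. The resulting pair $(\o,u)$ is then a global weak solution in the required class $L^\infty\big(\R_+;L^1\cap L^\infty\big)$. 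I expect the compactness of the velocities to be the main obstacle of this part: it rests entirely on the log-Lipschitz regularity, the precise gain that $L^\infty$ control of the vorticity grants and that is lost for vorticity with only $L^p$ integrability.

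For uniqueness I would run the Yudovich energy argument. Given two solutions $(\o_1,u_1)$ and $(\o_2,u_2)$ with the same datum, set $v:=u_1-u_2$ and test the equation satisfied by $v$ against $v$ itself; incompressibility annihilates the transport and pressure contributions, leaving
\[
\frac{1}{2}\,\frac{\dd}{\dd t}\,\|v\|_{L^2}^2\;\leq\;\int_{\R^2}|\nabla u_2|\,|v|^2\,\dd x\,.
\]
Here $\nabla u_2\notin L^\infty$, but the Calder\'on--Zygmund inequality yields $\|\nabla u_2\|_{L^p}\leq C\,p\,\|\o_2\|_{L^1\cap L^\infty}$, with constant growing only linearly in $p$. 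Combining H\"older's inequality with the interpolation $\|v\|_{L^{2p'}}^2\leq\|v\|_{L^2}^{2-2/p}\,\|v\|_{L^\infty}^{2/p}$ and the uniform $L^\infty$ bound on $v$, one obtains, for every $p\in[2,+\infty)$, a differential inequality $E'(t)\leq C\,p\,E(t)^{1-1/p}$ for $E(t):=\|v(t)\|_{L^2}^2$. Optimising the free exponent via the choice $p\simeq\log(1/E)$ converts this into the Osgood inequality $E'(t)\leq C\,E(t)\,\log\!\big(e/E(t)\big)$. Since $E(0)=0$ and $\int_0\dd s/\big(s\log(e/s)\big)=+\infty$, Osgood's lemma forces $E\equiv0$, hence $u_1=u_2$ and therefore $\o_1=\o_2$. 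This optimisation-plus-Osgood step is, in my view, the genuine heart of the theorem.
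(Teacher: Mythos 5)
The paper does not actually prove this statement: it is the classical Yudovich theorem, quoted in Subsection \ref{ss:yud} with a pointer to Theorems 8.1 and 8.2 of \cite{Maj-Bert}. Your proposal reproduces, in outline, exactly that classical argument (regularisation, the conservation law \eqref{est:vort-hom}, log-Lipschitz bounds from Biot--Savart, Arzel\`a--Ascoli plus weak-strong convergence in the product $\o^n u^n$ for existence; the $L^2$ energy estimate with $\|\nabla u_2\|_{L^p}\lesssim p$ for uniqueness), so on the whole it is correct and aligned with the route the paper endorses. Two remarks. First, the one step you gloss over that genuinely needs an argument is that $v=u_1-u_2$ lies in $L^2(\R^2)$: for $\o_0\in L^1\cap L^\infty$ with $\int\o_0\neq0$ the Biot--Savart velocity itself is \emph{not} square-integrable, and one must invoke the radial-energy decomposition (or an equivalent device, as in Chapter 8 of \cite{Maj-Bert}) to make sense of $E(t)=\|v(t)\|_{L^2}^2$; this is precisely the complication that the present paper avoids by assuming $u_0\in L^2$ from the start. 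Second, your endgame for uniqueness (optimising $p\simeq\log(1/E)$ to reach an Osgood inequality) is a legitimate variant, but it differs from the one the paper actually runs in Section \ref{s:uniqueness} for its non-homogeneous analogue: there one keeps $p$ fixed, integrates $E'\leq Kp\,E^{1-1/p}$ to get $E(t)\leq(Kt)^p$ on a short time interval, and then lets $p\to+\infty$. The two devices are interchangeable here; the fixed-$p$ version avoids discussing the measurability/choice of a time-dependent exponent, while the Osgood version makes the borderline nature of the estimate more transparent.
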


Let us make a few comments about the previous statement, also in view of a comparison with our approach and main results.

First of all, let us highlight the two main properties of Yudovich theory: global in time existence, and uniqueness of solutions.
Uniqueness strongly relies on the assumption $\o_0\in L^\infty$, and on the property \eqref{est:vort-hom}.
As far as only existence is concerned, however, the integrability assumptions on $\o$ may be somehow relaxed
to the condition $\o_0\in L^{p_1}\cap L^{p_2}$, for some $1\leq p_1 <2<p_2\leq +\infty$, which is enough to define (\tsl{via}
the Biot-Savart law) $u$ as a $L^\infty$ function.

Next, we observe that, from the property $\o\in L^\infty\big(\R_+;L^1(\R^2)\cap L^\infty(\R^2)\big)$ and
Calder\'on-Zygmund theory, one can see that $u$ actually belongs to
$L^\infty\big(\R_+;W^{1,p}(\R^2)\big)$, for any $2<p<+\infty$. At the same time, by potential theory estimates applied to \eqref{eq:BS-law},
one can show that $u$ is log-Lipschitz continuous with respect to the space variable (but fails to be Lipschitz, in general), which is still
a sufficient condition to define a unique flow.

In addition, from the above mentioned properties and from the velocity equation in \eqref{eq:hom-E}, in turn one can recover
suitable information for the quantity $\d_tu$ and, by elliptic estimates, 
for the pressure term $\nabla\Pi$. 
Thus, Yudovich theory is a theory of weak solutions at the level of the vorticity equation, but
the constructed solutions  are actually \emph{strong} solutions\footnote{By \emph{strong solutions},
we mean here solutions for which all the terms in the equations are defined a.e. on $\R_+\times\Omega$, $\Omega$ being the spatial domain where the equations
are set.} of the incompressible Euler equations at the level of the velocity formulation \eqref{eq:hom-E}.
In passing, let us mention an important difference between the classical case \eqref{eq:hom-E} and the case of variable density \eqref{eq:dd-E}.
In the homogeneous framework, by solving
the equations in the vorticity form one can completely neglect the analysis of the pressure term.
This is no more the case for non-homogeneous fluids, for which a pressure term appears also in the equation for the vorticity.
In order to study its regularity, we will need to resort to its elliptic equation and argue precisely as just sketched above for \eqref{eq:hom-E}.

\medbreak
Of course, the literature concerning the study of the classical incompressible Euler equations is very vast, even when focusing
on the case of weak solutions in two space dimensions. Reviewing all those results is out of the scopes of this introduction.

Here, let us only mention that many studies have been devoted to relax the hypothesis $\o_0\in L^\infty(\R^2)$ still getting uniqueness of solutions.
Yudovich himself, in \cite{Yud_1995}, did so in bounded domains of $\R^d$ ($d=2$ or $3$, even though the existence of his class of solutions
in $3$-D remains unknown at present), by requiring a growth $\|\o_0\|_{L^p}\,\lesssim\,\theta(p)$, with $\theta(p)\longrightarrow+\infty$
for $p\to+\infty$ at a certain rate (for instance, the choice $\theta(p)=\log p$ is suitable). Then Vishik \cite{Vish}
extended that result to a class of borderline Besov spaces.
More recently, Bernicot and Keraani \cite{Bern-K} obtained global well-posedness in the logarithmic
space ${\rm LBMO}(\R^2)$, which is a subclass of ${\rm BMO}(\R^2)$ yet strictly larger than $L^\infty(\R^2)$. Further improvements to
this result can be found in \cite{Bern-Hm, C-M-Z, Crippa-Stef}.

If one dismisses the uniqueness requirement and only focuses on existence of solutions, more results are available (see 
\tsl{e.g.} to \cite{DiP-M, Chae, Tan}).
On the ill-posedness side, instead, a recent breakthrough by Vishik \cite{V-1, V-2}
proves the existence of infinitely many solutions for the \emph{forced} $2$-D incompressible Euler equations in vorticity form, for initial vorticities
$\o_0$ belonging only to $L^1(\R^2)\cap L^p(\R^2)$, with $p<+\infty$. About this issue, we refer also to the monograph \cite{A-B-C-DL-G-J-K},
where the authors revisit Vishik's proof and propose a slightly different approach.
We refrain from quoting here a large number of contributions about various forms of ill-posedness in incompressible fluid mechanics,
as far from the scopes of the present work.

Finally, we mention that the validity of the Yudovich theorem (that is, existence and uniqueness for bounded initial vorticities)
in bounded domains with corners was addressed in \cite{L-Miot-W}.
Related to the dynamics in bounded domains, we mention that several studies  \cite{Co-Wu_95, Co-Wu_96, Co-D-E} have addressed the problem of
the inviscid limit in the Yudovich class.

\medbreak
In the next subsection, we are going to draw a parallel between the homogeneous Euler equations and their non-homogeneous version \eqref{eq:dd-E},
and explain why the Yudovich theory cannot be directly applied to the latter. In doing this, we will present an overview of our basic working assumptions;
their precise formulation, together with the statement of the main results of this work, can be found in Section \ref{s:results}.

\subsection{The non-homogeneous case: motivations of the hypotheses} \label{ss:nonhom}
In order to establish a theory \tsl{\`a la Yudovich} for the density-dependent incompressible Euler equations, it is natural to consider
the equation for the vorticity $\o$, defined as in \eqref{def:omega}. We now present the vorticity equation in the non-homogeneous
framework and discuss its main consequences.


\paragraph*{The vorticity equation, and consequences.}
Taking advantage of the non-vacuum condition \eqref{eq:vacuum}, which can be propagated also at later times, 
we can compute an equation for the vorticity $\o$ in the non-homogeneous setting:
\begin{equation} \label{eq:vort}
 \d_t\o\,+\,u\cdot\nabla\o\,-\,\frac{1}{\rho^2}\,\nabla^\perp\rho\cdot\nabla\Pi\,=\,0\,,
\end{equation}
where, according with the notation introduced above, we have set $\nabla^\perp\,=\,\big(-\d_2,\d_1\big)$.
In particular, from equation \eqref{eq:vort} we see that, if we want to propagate some $L^p$ norm of the vorticity directly by its equation,
we cannot avoid to estimate suitable Lebesgue norms of the density gradient $\nabla\rho$ and of the pressure term $\nabla\Pi$.
At the same time, because of the additional density--pressure term, here we have no hope to get simple \tsl{a priori} estimates as in \eqref{est:vort-hom}.

All this marks a severe difference with respect to the homogeneous case, and main difficulties arise as a consequence.
In order to circumvent the complexity introduced by the vorticity equation \eqref{eq:vort},
we adopt a new approach to propagate regularity for $\o$, inspired by our recent work \cite{F_2025}. The idea is to rather consider
the ``vorticity'' of the momentum $m\,:=\,\rho\,u$, namely the function
\[
 \eta\,:=\,\curl m \,=\,\d_1m^2\,-\,\d_2m^1\,.
\]
Observe that, by direct computations, one has
\begin{equation} \label{intro_eq:vort-eta}
 \eta\,=\,\rho\,\o\,+\,u\cdot\nabla^\perp\rho\,,
\end{equation}
so $L^p$ bounds for $\o$ can be recovered from similar bounds for $\eta$, provided suitable controls are available for both $u$ and $\nabla\rho$.
Thus, within this approach a preliminary information on $u$ is needed to treat $\o$: this represents a change of perspective
with respect to the classical Yudovich theory, in which all the estimates available on $u$ are derived from the Biot-Savart law \eqref{eq:BS-law}.
At the same time, we see that $\eta$ satisfies the transport equation
\begin{equation*}
 \d_t\eta\,+\,u\cdot\nabla\eta\,=\,\d_Xu\cdot u\,, 
\end{equation*}
where we have defined the vector field $X\,:=\,\nabla^\perp\rho$ and we have used the notation $\d_Xu\,=\,(X\cdot\nabla)u$.
In view of the previous equation and formula \eqref{intro_eq:vort-eta} for $\o$, we remark that
there is an interplay between the Lebesgue bounds for $\eta$, $u$ and $\nabla\rho$ (or equivalently $X$).
A key point, pushed forward by the analysis of \cite{F_2025}, is that the quantity $\d_Xu$ carries a non-trivial \emph{geometric} information,
hence it has to be considered as a unique term and must not be splitted when performing $L^p$ estimates. 
From this perspective, a natural choice 
is to require a bound for $\d_Xu$ in $L^\infty$ and estimate $\eta$ and $u$ in the same $L^p$ space.

The $L^\infty$ bound on the quantity $\d_Xu$ is one of the crucial points of all our theory, and we want to explain it in detail.
For the sake of clarity, we postpone this discussion at the end of this part and focus now on less delicate issues.
To begin with, let us delve into the analysis of $u$.

Pursuing the previous approach, based on the use of the momentum $m$ and its vorticity $\eta$, we observe that
Lebesgue bounds for $u$ can be derived from corresponding bounds for $m$. Now, thanks to the property $\div u=0$,
the Leray-Helmholtz decomposition of $m$ writes
\begin{equation} \label{intro_eq:m}
 m\,=\,-\,\nabla^\perp(-\Delta)^{-1}\eta\,-\,\nabla(-\Delta)^{-1}\big(u\cdot\nabla\rho\big)\,.
\end{equation}
By Sobolev embeddings and Calder\'on-Zygmund theory, we infer that $L^p$ bounds of $u$, when $1<p<+\infty$, would require a control on the
$L^q$ norm of $u\cdot\nabla\rho$ and $\eta$, for some $q<\min\{2,p\}$.
Now, we anticipate that, in our framework, we have $\nabla\rho\in L^\infty$ (again, this condition will be better explained below).
Thus, complications arise when estimating $\|u\|_{L^p}$ if $p<+\infty$. Instead, this strategy is suitable to bound $u$ in $L^\infty$,
as shown in \cite{F_2025}.
Indeed, we can bound the $L^\infty$ norm of $m\,=\,\rho\,u$, thus of $u$, by its $W^{1,p}$ norm, for some finite $p>2$; an application of Calder\'on-Zygmund theory 
then makes the $L^p$ norm of $u$ appear, which however can be controlled by interpolation:
\[
 \|u\|_{L^p}\,\lesssim\,\|u\|_{L^{k}}^\theta\,\|u\|^{1-\theta}_{L^\infty}\,,\qquad\qquad \mbox{ for a suitable }\quad \theta\in\,]0,1[\,.
\]
This, however, requires to fix a low integrability index $k$ such that $k<p$. The natural candidate is $k=2$, because this choice allows us to
take advantage of the basic kinetic energy balance law associated to equations \eqref{eq:dd-E}, which implies, thanks to \eqref{eq:vacuum},
that $\|u(t)\|_{L^2}\leq \|u_0\|_{L^2}$ for any time $t\geq0$ for which the solution is defined. This choice will have also an impact on the proof of uniqueness,
simplifying the arguments used in the classical theory (see \tsl{e.g.} Chapter 8 of \cite{Maj-Bert}).

\medbreak
Let us summarise the main points of the previous discussion.
First of all, we will assume the initial velocity field $u_0$ to be of finite energy, that is $u_0\in L^2$. This is an additional assumption with respect to the
classical Yudovich theory, which then requires to weaken the conditions on the initial vorticity. It fact,
imposing $\o_0\in L^{p_0}$, for some finite $p_0>2$, is enough for the stability analysis; for ensuring uniqueness, instead, we need to require
moreover $\o_0\in L^\infty$, as in the classical work by Yudovich.
With those assumptions at hand, regularity/integrability of $u$ and $\omega$ will be propagated by using $\eta$ and the vector field
$X=\nabla^\perp\rho$. As mentioned above, the analysis of $\eta$ (and, as we will see, also the one of $X$) will make
the geometric quantity $\d_Xu$ come into play.

Before commenting on the evolution of $X$ and $\d_Xu$, we want to spend a few words on the analysis of the pressure term $\nabla\Pi$.

\paragraph*{The pressure gradient.}
In the non-homogeneous setting, the pressure term $\nabla\Pi$ cannot be neglected in the analysis.
There are a couple of reasons for that.
Firstly, despite our argument will often move from the momentum equation to the vorticity equation and \tsl{viceversa}, our ultimate goal
is to construct weak solutions to the vorticity equation, namely equation \eqref{eq:vort}: this requires to identify the pressure gradient in the analysis.
Secondly, the uniqueness argument will be based (similarly to Yudovich's argument) on $L^2$ stability estimates performed at the level of
the momentum equation. However, because of the presence of the non-homogeneity $\rho$, the stability estimates will invoke suitable bounds
precisely for the pressure term.

This having been pointed out, we observe that, for density-dependent fluids, $\nabla\Pi$ solves the variable coefficients elliptic problem
\begin{equation} \label{intro_eq:pressure}
 -\,\div\left(\frac{1}{\rho}\,\nabla\Pi\right)\,=\,\div\Big((u\cdot\nabla)u\Big)\,.
\end{equation}
A convenient framework to solve this equation independently of the size of the non-homogeneity $\rho$ is the $L^2$ setting, see \tsl{e.g.} \cite{A-T}.
Thus one looks for conditions able to guarantee that $(u\cdot\nabla)u$ belongs to $L^2$.
Notice that the assumption $u_0\in L^2$  is not really useful here, as, in any case, we do not dispose of a $L^\infty$ bound for $\nabla u$. Instead,
the property $(u\cdot\nabla)u\in L^2$ can be derived from different considerations, which are only slightly more involved than in the homogeneous case,
in which one can use the fact that $u\in L^\infty$ and $\o\in L^1\cap L^\infty$, thus $\nabla u\in L^2$.

In passing, we observe that this is also consistent with previous results about well-posedness in the class of strong solutions for equations \eqref{eq:dd-E},
where infinite energy solutions can be considered while still guaranteeing the property $\nabla\Pi\in L^2$ (see \tsl{e.g.} \cite{D:F, F_2012} in this respect).

\paragraph*{The gradient of the density.}
Formulas \eqref{intro_eq:vort-eta} and \eqref{intro_eq:m} demand a bound on suitable Lebesgue norms of the gradient of the density.
As anticipated above, we will propagate integrability of $\nabla\rho$ by looking at the evolution of the vector field
$X\,=\,\nabla^\perp\rho$, following in this way another idea from \cite{F_2025}. The key observation is that $X$ 
is transported (in the sense of vector fields) by the flow associated to $u$.
Namely, $X$ satisfies the transport equation
\begin{equation} \label{intro_eq:X}
 \d_tX\,+\,(u\cdot\nabla)X\,=\,\d_Xu\,, 
\end{equation}
where, as before, we have defined $\d_Xu\,:=\,(X\cdot\nabla)u$.
Notice that the quantity $\d_Xu$ makes sense pointwise a.e. here;
however it could be defined even weakly, as $\d_Xu=\div(X\otimes u)$ owing to the fact that $\div X=0$.
Some facts about equation \eqref{intro_eq:X} must be pointed out.

To begin with, we mention that $L^p$ bounds on $X$ can be propagated only assuming $\d_Xu\,\in\,L^1_T(L^p)$.
The estimate can be closed splitting $\left\|\d_Xu\right\|_{L^p}\,\leq\,\left\|X\right\|_{L^p}\,\left\|\nabla u\right\|_{L^\infty}$, but the Lipschitz norm
of the velocity field is out of control in a theory of solutions \tsl{\`a la Yudovich}. Thus, as already remarked above, and accordingly with the analysis
of \cite{F_2025}, we will consider the geometric quantity $\d_Xu$ as a whole. 

Next, we want to justify our choice of taking $p=+\infty$ in the previous argument. In fact,
in light of \eqref{intro_eq:vort-eta}, we notice that having $X\in L^\infty$ is absolutely fundamental in order to get uniqueness,
for which one needs the property $\o\in L^\infty$ as well.
On the contrary, it is possible to see that the condition $X\in L^{p_1}$, for some $p_1>p_0$,
would be enough for performing the stability analysis. However, this would slightly complicate the computations, without
yielding any gain in the assumptions, as an \tsl{a priori} control on the geometric quantity $\d_Xu$ would be needed anyway
for propagating the $L^{p_1}$ bound for $X$.
This is why we prefer to directly take $p_1=+\infty$.


Motivated by the previous considerations, we will assume $\rho_0$ to belong to $W^{1,\infty}(\R^2)$, together with, of course, the bounds
imposed in \eqref{eq:vacuum}. Thus, in this framework, we are not able to consider irregular densities, for instance densities presenting
a jump across a (say) smooth interface.
However, no additional integrability at infinity for $\rho_0$ or
$\nabla \rho_0$ is required, nor any kind of closedness to some constant value, and we can handle arbitrarily large density variations.


\paragraph*{Geometric regularity.}
As it appears clear from the above discussion, disposing of an \tsl{a priori} bound on the $L^1_T(L^\infty)$ norm of $\d_Xu$ is crucial for the whole theory.

However, when trying to estimate $\d_Xu = (X\cdot\nabla)u$ in $L^1_T(L^\infty)$, one is led to face deep difficulties.
Many natural ways to do that in our context are destined to fail. For instance, one may write
\[
 \left\|\d_Xu\right\|_{L^\infty}\,\leq\,\|X\|_{L^\infty}\,\left\|\nabla u\right\|_{L^\infty}\,,
\]
but the quantity $\|\nabla u\|_{L^\infty}$ is out of control in Yudovich theory, as well as in our framework: the velocity field is \emph{not} Lipschitz,
but only log-Lipschitz continuous.
Similarly, the relation $\d_Xu\,=\,\d_Xm/\rho$ is out of use here: owing to \eqref{intro_eq:m},
it would lead to estimating other singular integral operators in $L^\infty$, which simply suffer of the same problem we have just encountered.
Another strategy one may use consists in computing an equation for $\d_Xu$. This is easy: using the fact that $\d_X\rho=0$ and
equation \eqref{intro_eq:X}, applying the operator $\d_X$ to the momentum equation yields
\[
 \rho\,\d_t\d_Xu\,+\,\rho\,(u\cdot\nabla)\d_Xu\,=\,-\,\d_X\nabla\Pi\,=\,-\,(X\cdot\nabla)\nabla\Pi\,.
\]
Now, computing the derivatives in equation \eqref{intro_eq:pressure}, we see that only an information on $\Delta\Pi$ is available,
implying that one would need to estimate yet another singular integral operator, namely $\nabla^2(-\Delta)^{-1}$, in the $L^\infty$ norm.

Observe that classical tools for getting $L^\infty$ bounds for singular integral operators require to either move to the space
of H\"older continuous functions, or to use the critical Besov space $B^0_{\infty,1}$. Pursuing this strategy, however, naturally leads us
to a Lipschitz bound for $u$, thus falling in the strong solutions setting (see \tsl{e.g.} \cite{D_2010, D:F, F_2012, Brav-F}).
At the same time, 
we see that the quantity $\d_Xu$ contains a geometric information
about only two derivatives of $u$ performed along the  (twisted) direction $X\,=\,\nabla^\perp\rho$.
Strictly speaking, for this quantity to be bounded, it is \emph{not necessary} to
dispose of a global $L^\infty$ bound on $\nabla u$. 
An explicit example of a function which possesses regularity along a fixed direction but which is not globally Lipschitz continuous can be easily produced.

\begin{ex} \label{ex:tang}
Let $f\,=\,\mds{1}_{D}$ be the characteristic function of a (say) smooth bounded domain $D\subset\R^2$.
Then $f$ is clearly not Lipschitz.
Now, let $g:\R^2\longrightarrow\R$ be a global parametrisation of the boundary $\d D$ of $D$, namely a function
such that $\d D\,=\,g^{-1}\big(\{0\}\big)$ and $|\nabla g(x)|>0$ for any $x\in\d D$. Define $Y\,:=\,\nabla^\perp g$. Then
$\d_Yf\equiv0$ and, more in general, $\d_Y^kf\equiv0$ for any $k\geq1$.
\end{ex}

The previous considerations lead us to treat the quantity $\d_Xu$ as a whole.
Our point of view, undertaken also in the recent work \cite{F_2025},
is to some extent related to the notion of \emph{striated regularity}, first introduced
by Chemin in his pioneering works \cite{Ch_1991, Ch_1993} to study the regularity of vortex patches. 

At the same time, we need to impose \tsl{a priori} working assumptions directly on $\d_Xu$, because (as explained above)
we are not able to estimate this term from the equations.

\subsection{Overview of the main results} \label{ss:res-overv}
Our main contribution (formulated precisely in Section \ref{s:results}) is twofold. Firstly, we prove convergence of sequences
of smooth solutions $\big(\rho_n,u_n,\nabla\Pi_n\big)_{n\in\N}$ towards Yudovich-type solutions\footnote{Here and in what follows, the name of
Yudovich-type solutions refer to solutions constructed at the level of regularity described in Subsection \ref{ss:nonhom}.
See Subsection \ref{ss:results} for more details about this.}
to system \eqref{eq:dd-E}  for low regularity initial data (as described above) and under the \tsl{a priori} assumption that
\begin{equation} \label{intro_eq:g-ass}
\exists\,T>0 
\qquad \mbox{ such that }\qquad\qquad \sup_{n\in\N}\left\|\d_{X_n}u_n\right\|_{L^1_T(L^\infty)}\,<\,+\infty\,,
\end{equation}
where $X_n\,:=\,\nabla^\perp\rho_n$.
Notice that this is a geometric regularity requirement on $\big(\rho_n,u_n,\nabla\Pi_n\big)_{n\in\N}$.

It is fair to point out that a characterisation of low regularity initial data giving rise (after approximation) to a sequence  of smooth solutions for which
\eqref{intro_eq:g-ass} holds true, or even the construction of an explicit example of such data and solutions, has been so far elusive, and remains open at present.
Thus, our convergence result can be seen as a sort of \emph{conditional existence} result of Yudovich-type solutions to the density-dependent
incompressible Euler equations. It is worth remarking that the (conditional) existence of Yudovich-type solutions is obtained
precisely up to the time $T>0$ for which the geometric regularity condition \eqref{intro_eq:g-ass} holds.

The second main result of this work is to prove \emph{uniqueness} of Yudovich-type solutions 
to system \eqref{eq:dd-E}, which, besides the integrability and regularity properties mentioned above, satisfy in addition
the condition $\d_Xu\in L^1_T(L^\infty)$, with $X\,=\,\nabla^\perp\rho$.
Besides, our uniqueness statement improves previous uniqueness results stated for regular solutions to system \eqref{eq:dd-E} in a finite-energy framework,
see \tsl{e.g.} \cite{D_2006, D_2010, D:F, Brav-F}.

\subsection*{Organisation of the paper}

After this introduction, we now give an overview of the rest of the paper.

In Section \ref{s:results} we fix our basic assumptions and state our main results. For the sake of better clarity, we
try to separate the hypotheses which are needed for the stability and convergence of smooth approximate solutions
(result stated in Theorems \ref{th:y-exist} and \ref{th:y-exist_2}), from the ones which are strictly needed for
establishing uniqueness (result stated in Theorem \ref{th:yudovich}).

After that, we tackle the proof of our main results.
Section \ref{s:uniform} collects uniform bounds which can be obtained, under our assumptions, on a family of smooth approximate solutions to system \eqref{eq:dd-E}.
In Section \ref{s:exist}, we present the proof of the convergence of smooth approximate solutions to a Yudovich-type solution
to the density-dependent Euler system \eqref{eq:dd-E}, by means of a weak compactness argument.
The proof of the uniqueness result will be carried out in Section \ref{s:uniqueness}.

\subsection*{Notation} 

Let us fix some notation which will be freely used throughout this paper.

Given a Banach space $\mf B$ over $\R^2$, we will adopt the same notation $\mf B(\R^2)$ for scalar, vector-valued and matrix valued functions.
Very often we will simply write $\mf B$, as no confusion can arise in this paper.
Typically, we will resort to the longer notation $\mf B(\R^2)$ when formulating the assumptions and the statements, and in
important centered formulas.

For an interval $I\subset \R$ and $\mf B$ as above,
we denote by $\mc C\big(I;\mf B\big)$ the space of continuous bounded functions on $I$ with values in $\mf B$. For any $p\in[1,+\infty]$,
the symbol $L^p\big(I;\mf B\big)$ stands for the space of measurable functions on $I$ such that the map $t\mapsto \left\|f(t)\right\|_{\mf B}$ belongs to $L^p(I)$.
When $I=[0,T]$, we will often use the shorten notation $L^r_T(\mf B)\,=\,L^r\big([0,T];\mf B\big)$, whereas we will resort to the full notation
in statements and centered formulas.
We will denote the space of test-functions over some set $Q$ equivalently by $\mc C^\infty_0(Q)$ or by $\mc D(Q)$, where $Q$ for us
will typically be either $\R^2$ or $[0,T]\times\R^2$. In the latter case, it is understood that the test function vanishes at time $t=T$.


In our estimates we will often avoid to write the explicit multiplicative constants which allow to pass from one line to the other. Thus, we will write
$A\,\lesssim\, B$ meaning that there exists a universal constant $C>0$, not depending on the solutions nor on the data (in the latter case, this will be pointed out),
such that $A\,\leq\,C\,B$.

Given a sequence of functions $\big(f_n\big)_{n\in\N}$ in a Banach space $\mf B$, we write (as usual) $\big(f_n\big)_{n\in\N}\subset \mf B$ to say that,
for all $n\in\N$, one has $f_n\in\mf B$. If, in addition, there exists a constant $C>0$ such that $\|f_n\|_{\mf B}\leq C$ for all $n\in\N$, we
will write instead $\big(f_n\big)_{n\in\N}\sqsubset \mf B$.


Given a two-dimensional vector field $v$, we define $v^\perp$ to be its rotation of angle $\pi/2$. More precisely, if $v\,=\,\big(v^1,v^2\big)$, then
$v^\perp\,=\,\big(-v^2,v^1\big)$. Similarly, we define the operator $\nabla^\perp$ as $\nabla^\perp\,=\,\big(-\d_2,\d_1\big)$.
Finally, given a two-dimensional vector field $v\,=\,\big(v^1,v^2\big)$, we define its $\curl$ as
$\curl(v)\,=\,\d_1v^2\,-\,\d_2v^1$.


\section*{Acknowledgements}

{\small

This work has been partially supported by the project CRISIS (ANR-20-CE40-0020-01), operated by the French National Research Agency (ANR),
by the Basque Government through the BERC 2022-2025 program and by the Spanish State Research Agency through the BCAM Severo Ochoa excellence accreditation
CEX2021-001142.
Finally, the author also aknowledges the support of the European Union through the COFUND program [HORIZON-MSCA-2022-COFUND-101126600-SmartBRAIN3].

The author is indebted to R. Danchin for sharing insightful comments about a preliminary version of this work.

}

\section{Basic assumptions and main results} \label{s:results}

This section is devoted to the statement of our main results. First of all, let us collect our main assumptions on the initial data.
The needed \tsl{ad hoc} control assumed on the geometric quantity will be directly imposed in the statements.

\subsection{Assumptions on the initial datum} \label{ss:ass-gen}

We present here our main working hypotheses on the initial datum $\big(\rho_0,u_0\big)$. Their motivations, as well as a comparison with
the assumptions of the classical Yudovich theory, have already been discussed in Subsection \ref{ss:nonhom}.

We start by dealing with the density function $\rho_0$. We will avoid the presence of vacuum and, at the same time, require some smoothness on it.

\begin{itemize}
\item[\bf (A1)] The initial density $\rho_0$ verifies, for two suitable positive constants  $0<\rho_*\leq\rho^*$, the property
\[
0\,<\,\rho_*\,\leq\,\rho_0\,\leq\,\rho^*\qquad\qquad \mbox{ and }\qquad\qquad \nabla\rho_0\,\in\,L^\infty(\R^2)\,.
\]
\end{itemize}

Concerning the initial velocity field, we require a finite energy condition, together with (of course) the divergence-free constraint.

\begin{itemize}
 \item[\bf (A2)] The initial velocity field $u_0$ verifies 
\begin{align*}
u_0\,\in\,L^2(\R^2)\,,\qquad\qquad \mbox{ with}\qquad \div u_0\,=\,0\,. 
\end{align*}
\end{itemize}

As mentioned in the introduction, some assumptions on the vorticity of the fluid are needed.
Here we want to distinguish between the conditions implying the existence of a solution (of course, as explained in Subsection \ref{ss:res-overv},
we mean conditional existence, provided the geometric requirements are fulfilled), and those which are needed for the uniqueness
in the considered class.

As far as existence is concerned, we will require the following condition.

\begin{itemize}
\item[\bf (AE3)] Let $u_0$ be the velocity field fixed in assumption \tbf{(A2)}. Define its vorticity $\o_0$ as
$\o_0\,:=\,\curl(u_0)\,=\,\d_1u^2_0\,-\,\d_2u^1_0$. Then $\o_0$ satisfies, for some index $p_0\in\,]2,4]$, the property
\[
\o_0\,\in\,L^{p_0}(\R^2)\,. 
\]
\end{itemize}

In order to guarantee uniqueness, we have to strengthen the previous assumption by requiring also boundedness of $\o_0$. Therefore,
\tbf{(AE3)} will be replaced by the following assumption \tbf{(AU3)}.

\begin{itemize}
\item[\bf (AU3)] Let be $u_0$ the velocity field fixed in assumption \tbf{(A2)}. Define its vorticity $\o_0$ as
$\o_0\,:=\,\curl(u_0)\,=\,\d_1u^2_0\,-\,\d_2u^1_0$. Then $\o_0$ satisfies, for some index $p_0\in\,]2,4]$, the property
\[
\o_0\,\in\,L^{p_0}(\R^2)\,\cap\,L^\infty(\R^2)\,.
\]
\end{itemize}

\begin{rmk} \label{r:u_0-bounded}
Since $p_0>d=2$, under assumption \tbf{(AE3)}, thus also under \tbf{(AU3)}, by Calder\'on-Zygmund theory and Sobolev embeddings one deduces that
\[
 u_0\,\in\,L^\infty(\R^2)\,,\qquad\qquad \mbox{ with }\qquad  \left\|u_0\right\|_{L^\infty}\,\lesssim\,\left\|u_0\right\|_{L^2}\,+\,\left\|\o_0\right\|_{L^{p_0}}\,.
\]
\end{rmk}

Let us point out that the restriction $p_0\leq4$ is purely of technical nature.
This requirement appears only in one precise point of the proof, when dealing with the pressure term: we refer to Proposition \ref{p:press-inf}
for more details.

\subsection{Geometry and approximation} \label{ss:ass-geom}

As explained in the introduction, our study is based on a key notion of \emph{geometric regularity}, already introduced and exploited in \cite{F_2025}. 
To begin with, let us recall its definition.

Assume that a (say) smooth vector field $X$ on $\R^d$ is given, and let $f:\R^d\longrightarrow \R$ be a Lipschitz continuous function.
We are interested in the \emph{directional derivative of $f$ along $X$}, namely in the quantity $\d_Xf$ defined as
\begin{equation*}
\d_Xf\,:=\,X\cdot\nabla f\,=\,\sum_{j=1}^dX^j\,\d_jf\,.
\end{equation*}
Notice that this quantity is well-defined pointwise almost everywhere on $\R^d$ if, for instance,
 $X\in L^\infty$ and $f$ is such that $\nabla f$ belongs to $L^p$, for some $p\in[1,+\infty]$.
This will be enough for our scopes; however notice that, in presence of some conditions on the divergence of $X$
(in our case we will have $\div X=0$, in fact), one could consider even less regular
functions $f$, by defining the quantity $\d_Xf$ in the following weak form:
\[
 \d_Xf\,:=\,\div\big(f\,X\big)\,-\,f\,\div X\,.
\]
Of course, the above definitions applies componentwise in the case when $f$ is replaced
by a vector field $F:\R^d\longrightarrow\R^d$.

As discussed in Subsections \ref{ss:nonhom} and \ref{ss:res-overv}, 
for some of our results we need to impose a sort of \emph{uniform} geometric regularity condition
for families of smooth approximate solutions to the Euler system \eqref{eq:dd-E}.
It goes without saying that those families are constructed by a suitable regularisation of the initial data.
In order to ligthen the presentation and the statement of the main results, let us make this precise and introduce the following definition.

\begin{defin} \label{d:reg-datum}
Let the couple $\big(\rho_0,u_0\big)$ satisfy the set of assumptions \tbf{(A1)} and \tbf{(A2)}. 

A \emph{regularisation of $\big(\rho_0,u_0\big)$} is a family of smooth functions  $\big(\rho_{0,n},u_{0,n}\big)_{n\in\N}$ such that
the following properties are satisfied:
\begin{enumerate}[(i)]
 \item for any $n\in\N$ fixed, $\rho_{0,n}$ belongs to $W^{k,\infty}(\R^2)$ for all $k\geq1$, and verifies the properties
$\rho_*-\veps_n\,\leq\,\rho_{0,n}\,\leq\,\rho^*+\veps_n$, for a suitable positive sequence $\big(\veps_n\big)_{n\in\N}$ such that $\veps_n\longrightarrow0$
when $n\to+\infty$;
 \item for any $n\in\N$ fixed, $u_{0,n}$ belongs to $H^s(\R^2)$ for all $s\geq0$ and verifies $\div u_{0,n}=0$;
 \item one has\footnote{Recall that the symbol $\sqsubset$ means that the sequence is not only included in the respective space, but also bounded therein.}
$\big(\rho_{0,n}\big)_{n\in\N}\,\sqsubset\,W^{1,\infty}(\R^2)$ and $\big(u_{0,n}\big)_{n\in\N}\,\sqsubset\,L^2(\R^2)$,
and there exists a constant $C>0$ such that 
\[
 \sup_{n\in\N}\left\|\rho_{0,n}\right\|_{W^{1,\infty}}\,\leq\,C\,\left\|\rho_{0}\right\|_{W^{1,\infty}}\qquad\qquad \mbox{ and }\qquad\qquad
\sup_{n\in\N}\left\|u_{0,n}\right\|_{L^2}\,\leq\,C\,\left\|u_{0}\right\|_{L^2}\,; 
\]
\item one has (without loss of generality, for the whole sequence) the strong convergence properties
\begin{align*}
&\left\|u_{0,n}\,-\,u_0\right\|_{L^2(\R^2)}\,\longrightarrow\,0\qquad\qquad \mbox{ and }\qquad\qquad
\left\|\rho_{0,n}\,-\,\rho_0\right\|_{L^\infty(K)}\,\longrightarrow\,0
\end{align*}
for any compact set $K\subset\R^2$.
\end{enumerate}

In the case that also assumption \tbf{(AE3)} is satified, one in addition requires the following properties:
\begin{enumerate}
 \item[(v)] the family $\big(u_{0,n}\big)_{n\in\N}$ also verifies $\big(u_{0,n}\big)_{n\in\N}\,\sqsubset\,L^\infty(\R^2)$ 
and, after defining, for all $n\in\N$, $\o_{0,n}\,:=\,\curl(u_{0,n})$, then one has $\big(\o_{0,n}\big)_{n\in\N}\,\sqsubset\,L^{p_0}(\R^2)$,
together with the bound
\[
 \sup_{n\in\N}\left\|\o_{0,n}\right\|_{L^{p_0}}\,\leq\,C\,\left\|\o_{0}\right\|_{L^{p_0}}\,;
\]
\item[(vi)] one has the strong convergence property
\[
\left\|\o_{0,n}\,-\,\o_0\right\|_{L^{p_0}}\,\longrightarrow\,0\,.
\]
\end{enumerate}
Should \tbf{(AE3)} be reinforced as in \tbf{(AU3)}, then one has in addition $\big(\o_{0,n}\big)_{n\in\N}\,\sqsubset\,L^{\infty}(\R^2)$,
with $\sup_{n\in\N}\left\|\o_{0,n}\right\|_{L^\infty}\,\leq\,C\,\left\|\o_{0}\right\|_{L^\infty}$,
and the strong convergence property $\o_{0,n}\,\longrightarrow\,\o_0$ holds true in $L^p$ for any $p\in[p_0,+\infty[\,$.
\end{defin}

Given an initial datum $\big(\rho_0,u_0\big)$ as in the previous definition, the classical regularisation procedures are either a localisation in frequencies \tsl{via}
frequency cut-off operators of a Littlewood-Paley decomposition, or a localisation in space \tsl{via} convolution
with a standard mollifying kernel. In both cases, the properties listed above hold true.
However, we want to keep a general point of view here, as the precise regularisation plays no role in our study and only the regularity and convergence properties
will be really useful. 

\medbreak
Now, let us fix a regularisation of our initial datum $\big(\rho_0,u_0\big)$, as given in Definition \ref{d:reg-datum} above.
Thanks to the results of \cite{D_2010, D:F} (see also Proposition 4.1 in \cite{Brav-F}), for any $n\in\N$ fixed we can solve system \eqref{eq:dd-E} locally in time.
This provides us with a family of smooth solutions $\big(\rho_n,u_n,\nabla\Pi_n\big)_{n\in\N}$, each one defined over $[0,T_n[\,\times\R^2$,
for some positive time $T_n>0$.
Notice that, since the initial datum $\big(\rho_0,u_0\big)$ is not smooth, we cannot guarantee that $\inf_{n\in\N} T_n>0$ without further conditions.

Next, take a family of regular solutions as above. Define $X_n\,:=\,\nabla^\perp\rho_n$.
Observe that both $X_n$ and $u_n$ are smooth, in particular they are defined pointwise over $[0,T_n[\,\times\R^2$.
So, it makes sense to consider the quantity $\d_{X_n}u_n$ and to take its $L^\infty$ norm over $\R^2$.
Let us recall that from the results of \cite{F_2025} it follows that, if $\d_{X_n}u_n$ belongs to $L^1_T(L^\infty)$ for some time $T>0$,
then the solution $\big(\rho_n,u_n,\nabla\Pi_n\big)$ can be continued beyond $T$. 
To some extent, the geometric assumption formulated in \eqref{hyp:geom_unif} below is consistent with this picture.

\subsection{Statement of the main results} \label{ss:results}
We can now state the main results of this paper. 
As already stressed several times, an \tsl{a priori} bound on a specific geometric quantity will be crucial.

Our first result concerns stability of smooth approximate solutions and their convergence to Yudovich-type solutions of the density-dependent
Euler system \eqref{eq:dd-E}. 
By Yudovich-type solutions, we basically mean solutions which are at the level of regularity and integrability specified in the course
of the discussion of Subsection \ref{ss:nonhom}; roughly speaking, this means non-Lipschitz velocity fields $u$ having vorticity in some $L^p$ space
and density functions $\rho\in W^{1,\infty}$.
The precise result is contained in the next statement. We postpone several comments on it after its formulation.

\begin{thm} \label{th:y-exist}
Fix an initial datum $\big(\rho_0,u_0\big)$ such that assumptions {\rm \tbf{(A1)}}, {\rm \tbf{(A2)}} and {\rm \tbf{(AE3)}} are satisfied.
Let $\big(\rho_n,u_n,\nabla\Pi_n\big)_{n\in\N}$ be a sequence of smooth solutions to system \eqref{eq:dd-E}, related to a regularisation
of $\big(\rho_0,u_0\big)$ in the sense of Definition \ref{d:reg-datum} and defined on $[0,T_n[\,\times\R^2$, for some $T_n>0$.
Define the vector field $X_n\,:=\,\nabla\rho_n$.
Assume that there exists a time $T>0$ such that
\begin{equation} \label{hyp:geom_unif}
\inf_{n\in\N}T_n\geq T>0\qquad \mbox{ and }\qquad
\sup_{n\in\N}\int^T_0\left\|\d_{X_n(t)}u_n(t)\right\|_{L^\infty}\,\dt\,<\,+\infty\,.
\end{equation}

Then, up to the extraction of a suitable subsequence, $\big(\rho_n,u_n,\nabla\Pi_n\big)_{n\in\N}$ converges to a triplet $\big(\rho,u,\nabla\Pi\big)$,
defined on $[0,T]\times\R^2$ and such that:
\begin{enumerate}[\rm (i)]
 \item $0<\rho_*\leq\rho(t,x)\leq\rho^*$ and $\nabla \rho\in L^\infty\big([0,T]\times\R^2\big)$;
 \item $u\in L^\infty\big([0,T];L^2(\R^2)\cap L^\infty(\R^2)\big)$; 
 \item the vorticity $\omega\,:=\,\curl u = \d_1u^2-\d_2u^1$ satisfies $\o\in L^\infty\big([0,T];L^{p_0}(\R^2)\big)$; 
 \item $\nabla\Pi\in L^\infty\big([0,T];L^2(\R^2)\big)$. 
\end{enumerate}
In addition, the triplet $\big(\rho,u,\nabla\Pi\big)$ solves \eqref{eq:dd-E}, while
$\big(\rho,\o,\nabla\Pi\big)$ solves the vorticity formulation \eqref{eq:vort} in the weak sense. 
\end{thm}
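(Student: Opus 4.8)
The plan is to run a weak compactness argument on the family $\big(\rho_n,u_n,\nabla\Pi_n\big)_{n\in\N}$, starting from the uniform bounds gathered in Section \ref{s:uniform}. Under the geometric assumption \eqref{hyp:geom_unif}, these should read: $\rho_n$ is bounded in $L^\infty\big([0,T];W^{1,\infty}\big)$ with $\rho_*-\veps_n\le\rho_n\le\rho^*+\veps_n$; $u_n$ is bounded in $L^\infty\big([0,T];L^2\cap L^\infty\big)$ (the $L^2$ part from the energy balance, the $L^\infty$ part from the momentum/$\eta_n$ analysis of \cite{F_2025}); $\o_n=\curl u_n$ is bounded in $L^\infty\big([0,T];L^{p_0}\big)$, whence $\nabla u_n$ is bounded in $L^\infty\big([0,T];L^{p_0}\big)$ by Calder\'on--Zygmund theory; and $\nabla\Pi_n$ is bounded in $L^\infty\big([0,T];L^2\big)$ via the elliptic problem \eqref{intro_eq:pressure} (Proposition \ref{p:press-inf}). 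By Banach--Alaoglu, up to extraction I would obtain weak-$*$ limits $\nabla\rho_n\upra\nabla\rho$, $u_n\upra u$ in $L^\infty\big([0,T];L^2\cap L^\infty\big)$, $\o_n\upra\o$ in $L^\infty\big([0,T];L^{p_0}\big)$ and $\nabla\Pi_n\upra\nabla\Pi$ in $L^\infty\big([0,T];L^2\big)$; since a weak limit of gradients is a gradient, $\nabla\Pi$ is genuinely a pressure gradient, and the regularity claims (i)--(iv) then follow by weak-$*$ lower semicontinuity of the norms.

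The heart of the matter is upgrading these weak convergences to the strong compactness needed to pass to the limit in the quadratic nonlinearities, which I would obtain via the Aubin--Lions--Simon lemma. For the density, the mass equation written as $\d_t\rho_n=-\div(\rho_n u_n)$ shows $\d_t\rho_n$ is bounded in $L^\infty\big([0,T];L^2_\loc\big)$, and since $\rho_n$ is bounded in $L^\infty_T(W^{1,\infty})$ — which embeds compactly into $C^0$ on compact sets — I would deduce $\rho_n\to\rho$ strongly in $\mc C\big([0,T];C^0(K)\big)$ for every compact $K$, consistently with Definition \ref{d:reg-datum}(iv). For the velocity, $u_n$ is bounded in $L^\infty_T(H^1_\loc)$ (using $p_0>2$), while the momentum equation gives $\d_tu_n=-(u_n\cdot\nabla)u_n-\rho_n^{-1}\nabla\Pi_n$, bounded in $L^\infty_T(L^2_\loc)$ thanks to the uniform $L^2$ pressure bound of Proposition \ref{p:press-inf} (whose proof uses $p_0\le4$) and to $(u_n\cdot\nabla)u_n\in L^2$; since $H^1_\loc\hookrightarrow\hookrightarrow L^2_\loc$, the lemma yields $u_n\to u$ strongly in $\mc C\big([0,T];L^2_\loc\big)$, hence in $L^q_\loc$ for every $q<+\infty$ by interpolation with the uniform $L^\infty$ bound. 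In particular $\o_n=\curl u_n\to\curl u$ in the sense of distributions, so the weak limit satisfies $\o=\curl u$.

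With strong convergence of $\rho_n$ and $u_n$ in hand, passing to the limit in the mass and momentum equations is routine: $\rho_n u_n\to\rho u$ and $\rho_n\,u_n\otimes u_n\to\rho\,u\otimes u$ in $L^1_\loc$, while $\nabla\Pi_n\upra\nabla\Pi$ linearly, so the limit triplet solves \eqref{eq:dd-E} in the weak (velocity) formulation, the initial datum being recovered by keeping test functions that do not vanish at $t=0$. The delicate point, and the one I expect to be the main obstacle, is the vorticity formulation \eqref{eq:vort}: its density--pressure coupling $\rho_n^{-2}\,\nabla^\perp\rho_n\cdot\nabla\Pi_n$ is a product of two sequences that converge only weakly, so it cannot be passed to the limit directly. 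I would circumvent this entirely by \emph{not} passing to the limit in \eqref{eq:vort}, but instead deriving it \tsl{a posteriori} from the limiting velocity equation: applying $\curl$ to \eqref{eq:dd-E} for $\big(\rho,u,\nabla\Pi\big)$ produces, distributionally, $\d_t\o+\div(\o\,u)-\rho^{-2}\,\nabla^\perp\rho\cdot\nabla\Pi=0$, where now the coupling term is the product of the \emph{limits}, well defined as an $L^\infty\times L^2$ pairing.

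This last step is legitimate because $\curl$ is a continuous operation on distributions and the algebraic identities $\curl\div(u\otimes u)=\div(\o\,u)$ and $\curl(\rho^{-1}\nabla\Pi)=-\rho^{-2}\nabla^\perp\rho\cdot\nabla\Pi$ hold at the regularity of the limit (with $\div u=0$, $\o\in L^{p_0}$, $\nabla\rho\in L^\infty$ and $\nabla\Pi\in L^2$). As a by-product, it forces the weak limit of $\rho_n^{-2}\nabla^\perp\rho_n\cdot\nabla\Pi_n$ to coincide with $\rho^{-2}\nabla^\perp\rho\cdot\nabla\Pi$ — a compensated-compactness phenomenon that one cannot read off from soft weak-convergence arguments alone, but which is dictated here by the structure of the system once the velocity formulation has been secured at the limit.
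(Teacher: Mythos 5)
Your proposal is correct and follows essentially the same route as the paper: uniform bounds, weak-$*$ extraction, strong compactness in time for $\rho_n$ and $u_n$ (the paper uses Ascoli--Arzel\`a where you invoke Aubin--Lions--Simon, an immaterial difference), limit passage in the mass and momentum equations via the strong$\,\times\,$weak product structure, and an \emph{a posteriori} derivation of the vorticity equation from the limit velocity equation using the cancellation $\curl\big(\rho^{-1}\nabla\Pi\big)=-\rho^{-2}\,\nabla^\perp\rho\cdot\nabla\Pi$ --- which is exactly the paper's strategy of testing the limit momentum equation against $\psi/\rho$ (justified by the transport equation satisfied by $1/\rho$) and then against $\nabla^\perp\phi$. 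One small correction: the uniform bound $\big(\nabla\Pi_n\big)_{n\in\N}\sqsubset L^\infty\big([0,T];L^2(\R^2)\big)$ comes from Lemma 2 of \cite{D_2010} applied to the elliptic equation for $\nabla\Pi_n$ and does \emph{not} use $p_0\leq4$; Proposition \ref{p:press-inf}, where that restriction enters, concerns the $L^\infty$ bound on the limit pressure under the stronger assumption \tbf{(AU3)} and plays no role in Theorem \ref{th:y-exist}.
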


We can now formulate several remarks concerning Theorem \ref{th:y-exist}.

\begin{rmk} \label{r:result}
Theorem \ref{th:y-exist} can be interpreted as a \emph{conditional} existence result of Yudovich-type solutions to the
incompressible Euler equations with variable density. Indeed, it claims that, provided the \tsl{a priori} geometric assumption \eqref{hyp:geom_unif}
is satisfied, one can construct Yudovich-type solutions to system \eqref{eq:dd-E}.

We remark that the convergence (hence, the conditional existence of Yudovich-type solutions) holds up to the time $T>0$ for which assumption \eqref{hyp:geom_unif}
is verified. Should this assumption hold true with $T=+\infty$, our theorem would yield a global in time result.
\end{rmk}

\begin{rmk} \label{r:convergence}
 The precise topology in which the convergence of $\big(\rho_n,u_n,\nabla\Pi_n\big)_{n\in\N}$ (actually, of a subsequence of it)
towards $\big(\rho,u,\nabla\Pi\big)$ holds is not specified in Theorem \ref{th:y-exist}.
We have chosen to do this, in order to keep the statement lighter and mainly focus on the properties verified by the target triplet.

Roughly speaking, convergence holds true in the weak-$*$ topology of the spaces appearing in items (i) to (iv) of the statement. However, this will be made
precise in the course of the proof, see Sections \ref{s:uniform} and \ref{s:exist}.
\end{rmk}

\begin{rmk} \label{r:furt_prop}
Further properties of the target solutions $\big(\rho,u,\nabla\Pi\big)$ will be derived in the course of the proof.
In particular, we will see that equations \eqref{eq:dd-E} are satisfied almost everywhere on $[0,T]\times\R^2$
(whereas the vorticity formulation is verified only in the weak sense).
\end{rmk}

\begin{rmk} \label{r:geometry}
The geometric condition \eqref{hyp:geom_unif} does not imply the family of velocity fields $\big(u_n\big)_{n\in\N}$ to be uniformly Lipschitz
continuous (which would immediately imply that the constructed solutions are too regular to be compared with Yudovich's ones).
In this sense, it looks as a suitable condition in order to construct a Yudovich theory for equations \eqref{eq:dd-E}.

However, it must be noted that the explicit construction of families of smooth solutions  $\big(\rho_n,u_n,\nabla\Pi_n\big)_{n\in\N}$
satisfying \eqref{hyp:geom_unif}, as well as the characterisation of initial data (below the Lipschitz framework) giving rise to such families of solutions,
has revealed to be, so far, elusive.
\end{rmk}

Let us now focus on the problem of uniqueness of Yudovich-type solutions to equations \eqref{eq:dd-E}.
For this, we need two main ingredients. The first one, already crucial in Yudovich theory, is the property $\o\in L^\infty\big([0,T]\times\R^2\big)$.
The second one, which is proper of the non-homogeneous setting, is to establish that also the limit directional derivative $\d_Xu$,
where $X\,:=\,\nabla^\perp\rho$, remains $L^1_T(L^\infty)$. Unfortunately, under assumption \eqref{hyp:geom_unif},
this property does not follow from weak compactness arguments, therefore this condition must be required directly on the triplet under consideration.

First of all, let us present the following improved version of Theorem \ref{th:y-exist}, which is obtained when replacing
assumption \tbf{(AE3)} with \tbf{(AU3)}. We recall that this simply means that we additionally require the initial vorticity $\o_0$
to be bounded.
The Zygmund space $\mc Z$ and log-Lipschitz space $LL$ appearing in the statement are somehow classical; their definitions will be recalled
in Subsection \ref{ss:further} below.

\begin{thm} \label{th:y-exist_2}
Fix an initial datum $\big(\rho_0,u_0\big)$ such that assumptions {\rm \tbf{(A1)}}, {\rm \tbf{(A2)}} and {\rm \tbf{(AU3)}} are satisfied.
Let $\big(\rho_n,u_n,\nabla\Pi_n\big)_{n\in\N}$ be a sequence of smooth solutions to system \eqref{eq:dd-E}, related to a regularisation
of $\big(\rho_0,u_0\big)$ in the sense of Definition \ref{d:reg-datum} and defined on $[0,T_n[\,\times\R^2$, for some $T_n>0$.
Define the vector field $X_n\,:=\,\nabla\rho_n$.
Assume that there exists a time $T>0$ such that the geometric assumption \eqref{hyp:geom_unif} is satisfied.

Then, up to the extraction of a suitable subsequence, $\big(\rho_n,u_n,\nabla\Pi_n\big)_{n\in\N}$ converges to a triplet $\big(\rho,u,\nabla\Pi\big)$,
defined on $[0,T]\times\R^2$ and such that:
\begin{enumerate}[\rm (a)]
 \item $0<\rho_*\leq\rho(t,x)\leq\rho^*$ and $\nabla \rho\in L^\infty\big([0,T]\times\R^2\big)$;
 \item $u\in L^\infty\big([0,T];L^2(\R^2)\cap \mc Z(\R^2)\big)$; 
in particular $u\in L^\infty\big([0,T];LL(\R^2)\big)$ and, as such, possesses a unique flow; 
 \item the vorticity $\omega$ satisfies $\o\in L^\infty\big([0,T];L^{p_0}(\R^2)\cap L^\infty(\R^2)\big)$;
 \item $\nabla\Pi\in L^\infty\big([0,T];L^2(\R^2)\cap L^\infty(\R^2)\big)$.
\end{enumerate}
In addition, the triplet $\big(\rho,u,\nabla\Pi\big)$ solves \eqref{eq:dd-E}, while
$\big(\rho,\o,\nabla\Pi\big)$ solves the vorticity formulation \eqref{eq:vort} in the weak sense. 
\end{thm}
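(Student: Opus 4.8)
The plan is to follow the very same scheme as in the proof of Theorem \ref{th:y-exist}: first establish uniform-in-$n$ bounds on the family $\big(\rho_n,u_n,\nabla\Pi_n\big)$ (the content of Section \ref{s:uniform}), then extract weak limits and pass to the limit in the equations by a compactness argument (Section \ref{s:exist}). Since under \textbf{(AU3)} the regularisation of Definition \ref{d:reg-datum} additionally controls $\o_{0,n}$ in $L^\infty$, the only genuinely new task, compared with Theorem \ref{th:y-exist}, is to propagate uniformly the $L^\infty$ bounds on the vorticity and on the pressure gradient; everything else is inherited verbatim. First I would recall the bounds already available: transporting the non-vacuum condition keeps $\rho_*-\veps_n\le\rho_n\le\rho^*+\veps_n$; the transport equation \eqref{intro_eq:X} for $X_n=\nabla^\perp\rho_n$ combined with \eqref{hyp:geom_unif} gives $\sup_n\|\nabla\rho_n\|_{L^\infty_T(L^\infty)}<+\infty$, hence (a); the energy balance yields $\sup_n\|u_n\|_{L^\infty_T(L^2)}<+\infty$; and the coupled estimate for $\big(\|\eta_n\|_{L^{p_0}},\|u_n\|_{L^\infty}\big)$ — closed through the $\eta_n$ transport equation (with forcing $\d_{X_n}u_n\cdot u_n$), the decomposition \eqref{intro_eq:m}, Calder\'on--Zygmund theory, interpolation between $L^2$ and $L^\infty$, and Gr\"onwall with the integrable weight $\|\d_{X_n}u_n\|_{L^\infty}$ — yields $\sup_n\|u_n\|_{L^\infty_T(L^\infty)}<+\infty$ and $\sup_n\|\eta_n\|_{L^\infty_T(L^{p_0})}<+\infty$, whence, via \eqref{intro_eq:vort-eta}, the $L^{p_0}$ bound on $\o_n$.

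For the new bounds I would treat vorticity and pressure together, through a coupled $L^\infty$ estimate. On the one hand, Proposition \ref{p:press-inf}, applied to the elliptic problem \eqref{intro_eq:pressure}, controls $\|\nabla\Pi_n\|_{L^\infty}$ in terms of the quantities already bounded (the $L^\infty$ norm of $u_n$, the $L^{p_0}$ norm of $\o_n$, and $\|X_n\|_{L^\infty}$) and of $\|\o_n\|_{L^\infty}$ itself — it is exactly here that the restriction $p_0\le4$ is used. On the other hand, rewriting \eqref{eq:vort} as the transport equation
\[
 \d_t\o_n\,+\,u_n\cdot\nabla\o_n\,=\,\frac{1}{\rho_n^2}\,X_n\cdot\nabla\Pi_n
\]
and using that $u_n$ is divergence-free (so its flow is measure-preserving), I would get $\|\o_n(t)\|_{L^\infty}\le\|\o_{0,n}\|_{L^\infty}+\int_0^t\|\rho_n^{-2}X_n\cdot\nabla\Pi_n\|_{L^\infty}\dt$, where $1/\rho_n^2\le\rho_*^{-2}$ (up to $\veps_n$) and $\|X_n\|_{L^\infty}$ are under control. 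Inserting the pressure bound turns this into a closed integral inequality for $\|\o_n\|_{L^\infty}$, to be resolved by Gr\"onwall (or, if the pressure estimate is only logarithmic in $\|\o_n\|_{L^\infty}$, by Osgood), yielding $\sup_n\|\o_n\|_{L^\infty_T(L^\infty)}<+\infty$ and therefore $\sup_n\|\nabla\Pi_n\|_{L^\infty_T(L^\infty)}<+\infty$, i.e. (c)--(d). Finally, \eqref{intro_eq:vort-eta} promotes $\eta_n$ to $L^\infty_T(L^\infty)$, and feeding $\eta_n$ and $u_n\cdot\nabla\rho_n$ — both uniformly bounded in $L^{p_0}\cap L^\infty$ — into \eqref{intro_eq:m}, together with the Zygmund regularity of the operators $\nabla^\perp(-\Delta)^{-1}$ and $\nabla(-\Delta)^{-1}$ on such sources and the algebra property of $\mc Z=B^1_{\infty,\infty}$ (with $1/\rho_n\in W^{1,\infty}$), gives $\sup_n\|u_n\|_{L^\infty_T(\mc Z)}<+\infty$, hence the log-Lipschitz bound of (b).

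With the uniform bounds in hand, the compactness step is identical to the one for Theorem \ref{th:y-exist}. I would extract weak-$*$ limits $\rho_n\rightharpoonup^*\rho$, $u_n\rightharpoonup^*u$, $\o_n\rightharpoonup^*\o$ and $\nabla\Pi_n\rightharpoonup^*\nabla\Pi$ in the respective spaces; properties (a)--(d) then follow from weak-$*$ lower semicontinuity of the norms. To pass to the limit in the nonlinear terms I would upgrade this to strong local compactness: the equations provide uniform bounds on $\d_t\rho_n$, $\d_t(\rho_nu_n)$ and $\d_tX_n$ in suitable negative-regularity spaces, so an Aubin--Lions--Simon argument gives strong convergence of $\rho_n$, $u_n$ and $X_n$ in $\mc C\big([0,T];L^r_{\loc}\big)$ for an appropriate $r$. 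This suffices to identify the weak limits of the products $\rho_nu_n$, $u_n\otimes u_n$ and $u_n\cdot\nabla\rho_n$, and thus to conclude that $\big(\rho,u,\nabla\Pi\big)$ solves \eqref{eq:dd-E} a.e. and that $\big(\rho,\o,\nabla\Pi\big)$ solves \eqref{eq:vort} in the weak sense.

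The hard part, beyond the bookkeeping, is twofold. First, the coupled closure of the $L^\infty$ estimates for $\o_n$ and $\nabla\Pi_n$: unlike the homogeneous case, where $\|\o\|_{L^\infty}$ is simply conserved, here it is driven by the pressure, and one must ensure that Proposition \ref{p:press-inf} delivers a pressure bound with at most the admissible (linear, or logarithmic) dependence on $\|\o_n\|_{L^\infty}$, so that the Gr\"onwall/Osgood loop actually closes — this is precisely what forces $p_0\le4$. Second, in the compactness step, passing to the limit in the density--pressure term $\rho_n^{-2}X_n\cdot\nabla\Pi_n$ of \eqref{eq:vort} is delicate, since it pairs the only weakly converging factor $\nabla\Pi_n$ against $X_n$; the argument hinges on having genuine strong (local) convergence of $X_n=\nabla^\perp\rho_n$, which is exactly why the time-regularity of $X_n$ coming from \eqref{intro_eq:X} and the Aubin--Lions argument are needed.
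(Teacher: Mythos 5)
There is a genuine gap in the step where you propagate the uniform $L^\infty$ bound on the vorticity. You propose to use the vorticity equation \eqref{eq:vort} itself, writing $\d_t\o_n+u_n\cdot\nabla\o_n=\rho_n^{-2}X_n\cdot\nabla\Pi_n$ and closing a coupled Gr\"onwall loop between $\|\o_n\|_{L^\infty}$ and $\|\nabla\Pi_n\|_{L^\infty}$. But the elliptic estimate behind Proposition \ref{p:press-inf} does not deliver a bound on $\|\nabla\Pi_n\|_{L^\infty}$ that is linear (or logarithmic) in $\|\o_n\|_{L^\infty}$: the bootstrap requires $\Delta\Pi_n\in L^p$ for some $p>2$, hence $\nabla u_n:\nabla u_n\in L^p$, hence $\nabla u_n\in L^{2p}$ with $2p>4$, and by Calder\'on--Zygmund and interpolation this costs $\|\o_n\|_{L^{p_0}}^{2\theta}\|\o_n\|_{L^\infty}^{2(1-\theta)}$ with $1-\theta>0$. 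The resulting differential inequality is of the type $y'\lesssim 1+y^{1+\alpha}$ with $\alpha>0$, which only yields a bound on a time interval that may be strictly shorter than the $[0,T]$ on which \eqref{hyp:geom_unif} holds, and which degenerates as $n\to+\infty$ unless you control it independently. This is exactly the obstruction the paper's strategy is built to avoid: the $L^\infty$ bound on $\o_n$ is \emph{not} obtained from \eqref{eq:vort}, but from the transport equation \eqref{eq:eta} for $\eta_n=\curl(\rho_nu_n)$, whose forcing term is $\d_{X_n}u_n\cdot u_n$ and is therefore in $L^1_T(L^\infty)$ directly by the geometric assumption \eqref{hyp:geom_unif} together with the already-established uniform $L^\infty$ bound on $u_n$; the bound on $\o_n$ then follows algebraically from \eqref{eq:eta-vort}, with no pressure and no loop at all.

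Two further remarks. First, once $\o\in L^\infty$ is secured, the paper derives items (b) and (d) as \emph{static} consequences on the limit solution: Proposition \ref{p:u-Zyg} gives $u\in\mc Z$ by potential estimates on the Biot--Savart kernel from $u\in L^\infty$ and $\o\in L^\infty$ alone (your route through \eqref{intro_eq:m} and the algebra $\mc Z\cap L^\infty$ is plausible but unnecessary), and Proposition \ref{p:press-inf} gives $\nabla\Pi\in L^\infty$ from \eqref{eq:Delta-Pi} by a two-step elliptic bootstrap; this is where $p_0\le4$ enters, not in any Gr\"onwall closure. Second, your concern about passing to the limit in the density--pressure term of \eqref{eq:vort} is resolved in the paper not by strong compactness of $X_n$, but by the cancellation $\nabla^\perp(1/\rho_n)\cdot\nabla\Pi_n=\curl\big(\rho_n^{-1}\nabla\Pi_n\big)$ and an integration by parts, which only requires strong local convergence of $1/\rho_n$; alternatively, the paper derives the weak vorticity equation by testing the momentum equation against $\nabla^\perp\phi/\rho$. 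The remainder of your compactness argument matches the paper's.
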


Finally, we can present the result stating uniqueness of the Yudovich-type solutions to system \eqref{eq:dd-E} ``constructed'' (under assumption
\eqref{hyp:geom_unif}, of course) above.
\begin{thm} \label{th:yudovich}
Fix initial datum $\big(\rho_0,u_0\big)$ such that assumptions {\rm \tbf{(A1)}}, {\rm \tbf{(A2)}} and {\rm \tbf{(AU3)}} are satisfied.
Let the time $T>0$ be given.

Then, there exists at most one Yudovich-type solution $\big(\rho,u,\nabla\Pi\big)$ to system \eqref{eq:dd-E} on $[0,T]\times\R^2$,
namely a solution which satisfies the properties from {\rm (a)} to {\rm (d)} listed in Theorem \ref{th:y-exist_2}, which in addition verifies
\begin{equation} \label{hyp:geom_lim}
 \int^T_0\left\|\d_{X(t)}u(t)\right\|_{L^\infty}\,\dt\,<\,+\infty\,,
\end{equation}
where we have defined $X\,:=\,\nabla^\perp\rho$.
\end{thm}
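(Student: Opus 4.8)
The plan is to prove uniqueness by a stability estimate in the energy space, comparing two hypothetical solutions $\big(\rho_1,u_1,\nabla\Pi_1\big)$ and $\big(\rho_2,u_2,\nabla\Pi_2\big)$ sharing the same initial datum, following the classical Yudovich scheme but performed at the level of the momentum equation rather than the velocity equation, as announced in the introduction. I would set $\delta\rho:=\rho_1-\rho_2$, $\delta u:=u_1-u_2$, $\delta\Pi:=\Pi_1-\Pi_2$ and write down the equations satisfied by these differences. The density difference solves a transport equation $\d_t\delta\rho+u_1\cdot\nabla\delta\rho=-\delta u\cdot\nabla\rho_2$, and the momentum difference satisfies an equation obtained by subtracting the two copies of the second line of \eqref{eq:dd-E}. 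The natural quantity to control is then the combined energy-type functional
\[
E(t)\,:=\,\frac12\int_{\R^2}\rho_1(t)\,|\delta u(t)|^2\,\dx\,+\,\frac12\,\|\delta\rho(t)\|_{L^2}^2\,,
\]
where weighting $|\delta u|^2$ by $\rho_1$ (rather than by a constant) lets the awkward term $\rho_1\,\d_t\delta u+\rho_1\,(u_1\cdot\nabla)\delta u$ be handled cleanly via the transport structure of $\rho_1$ and the divergence-free condition on $u_1$.

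Next I would differentiate $E(t)$ in time and estimate each resulting term. Using $\div u_1=0$ and the density equation for $\rho_1$, the transport part contributes nothing, so the worst terms are: the coupling term $\int (\delta\rho)\,(\cdots)\,\dx$ coming from the density difference equation, the forcing term $-\int \delta\rho\,\big((u_2\cdot\nabla)u_2\big)\cdot\delta u\,\dx$ (or whichever form results from carefully re-expressing the momentum equation differences in terms of $\delta\rho$ and $\delta u$), and the pressure contribution $\int \delta u\cdot\nabla\delta\Pi\,\dx$. The pressure term is precisely where properties (d) of Theorem \ref{th:y-exist_2} and assumption \eqref{eq:vacuum} enter: since $\nabla\Pi_i\in L^\infty\big([0,T];L^2\cap L^\infty\big)$, and $\delta\Pi$ solves the difference of the elliptic problems \eqref{intro_eq:pressure} with variable coefficient $1/\rho_i$, I would bound $\|\nabla\delta\Pi\|_{L^2}$ in terms of $\|\delta u\|_{L^2}$ and $\|\delta\rho\|_{L^2}$ by elliptic estimates for the variable-coefficient operator $-\div\big(\rho_1^{-1}\nabla\,\cdot\,\big)$, absorbing the discrepancy between the coefficients $\rho_1^{-1}$ and $\rho_2^{-1}$ (which is itself $O(\delta\rho)$) using the $L^\infty$ bounds on the data and on $\nabla\Pi_2$.

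The log-Lipschitz structure is what forces the Yudovich-type argument rather than a naive Grönwall estimate. The term requiring the full strength of the hypotheses is the one involving $\nabla u_2$ (or $\nabla\rho_2$) tested against the energy: because $u$ is only log-Lipschitz (property (b)) and not Lipschitz, a term like $\int \rho_1\,(\delta u\cdot\nabla)u_2\cdot\delta u\,\dx$ cannot be closed by $\|\nabla u_2\|_{L^\infty}$. Instead I would use the Yudovich interpolation trick: estimate $\|\nabla u_2\|_{L^p}\lesssim p\,\|\o_2\|_{L^\infty}$ for large $p$ via Calderón-Zygmund theory (here property (c), $\o\in L^\infty$, is indispensable, exactly as in the classical theory), bound the critical term by Hölder's inequality as $\|\delta u\|_{L^{2p'}}^2\,\|\nabla u_2\|_{L^p}$, interpolate $\|\delta u\|_{L^{2p'}}$ between $L^2$ and $L^\infty$, and use the uniform $L^\infty$ bound on $\delta u$ (from property (b), since $u_i\in L^\infty$). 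This produces a differential inequality of the form $E'(t)\lesssim \phi(E(t))$ with $\phi(s)\sim s\,\log(1/s)$ (an Osgood modulus of continuity), which, together with $E(0)=0$, forces $E\equiv0$ on $[0,T]$ by the Osgood lemma.

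The main obstacle I expect is the treatment of the density difference $\delta\rho$ and its coupling to $\delta u$: unlike in the homogeneous case, $\delta\rho$ does not vanish and its transport equation has a right-hand side $-\delta u\cdot\nabla\rho_2$, so I must control $\|\delta\rho\|_{L^2}$ simultaneously inside the same functional $E(t)$, using $\nabla\rho_2\in L^\infty$ (property (a)) to bound $\|\delta u\cdot\nabla\rho_2\|_{L^2}\lesssim\|\delta u\|_{L^2}$. The delicate point is ensuring that \emph{all} the newly appearing coupling terms combine into the same Osgood-type modulus without producing a genuinely super-linear (non-integrable) term; in particular, the role of the geometric hypothesis \eqref{hyp:geom_lim}, $\d_Xu\in L^1_T(L^\infty)$ with $X=\nabla^\perp\rho$, is presumably to provide the missing control on precisely those terms where a derivative falls on $\rho$ in a direction that the raw $L^\infty$ bound on $\nabla\rho$ cannot handle quantitatively enough, so I would expect to invoke \eqref{hyp:geom_lim} when estimating the contribution of $\d_X u$ appearing through the relation \eqref{intro_eq:X} or through the momentum-difference equation, and this interplay between the geometric quantity and the Osgood closure is where the argument is most likely to be subtle.
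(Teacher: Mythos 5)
Your proposal follows essentially the same route as the paper: the same energy functional $E(t)=\|\sqrt{\rho_1}\,\delta u\|_{L^2}^2+\|\delta\rho\|_{L^2}^2$, the same difference system for $(\delta\rho,\delta u,\nabla\delta\Pi)$, the Yudovich interpolation trick based on $\|\nabla u_2\|_{L^p}\lesssim \frac{p^2}{p-1}\,\|\o_2\|_{L^p}$ together with interpolation of $\delta u$ between $L^2$ and $L^\infty$, and an Osgood-type closure (the paper keeps $p$ fixed, obtains $E'\leq C_1E+C_2\,p\,E^{1-1/p}$, compares with the maximal solution $(Kt)^p$ and sends $p\to+\infty$, which is equivalent to your $s\log(1/s)$ modulus). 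Two points deserve correction. First, no elliptic estimate for $\nabla\delta\Pi$ is needed: since $\div\delta u=0$, the term $\int\delta u\cdot\nabla\delta\Pi\,\dx$ vanishes identically after integration by parts, and the only surviving pressure contribution is $\int\frac{\delta\rho}{\rho_2}\,\nabla\Pi_2\cdot\delta u\,\dx$ (obtained by substituting the equation for $u_2$ into the source term), which the paper handles by the same interpolation trick via the bound $\|\nabla\Pi_2\|_{L^p}\lesssim\sqrt{p}$ coming from $\nabla\Pi_2\in H^1(\R^2)$. Your elliptic route for $\nabla\delta\Pi$ could be made to close by writing the convective right-hand side in divergence form, but it is an unnecessary detour. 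Second, your guess about the role of \eqref{hyp:geom_lim} is off target: no term involving $\d_Xu$ appears in the stability estimate, and the hypothesis is invoked only in the preliminary \tsl{a priori} estimates of Subsection \ref{est:a-priori} to express the bounds on $\nabla\rho$, $\eta$, $u$, $\o$ and $\nabla\Pi$ quantitatively in terms of the data and of $M_0=\int_0^T\|\d_{X}u\|_{L^\infty}\dt$; the memberships actually used to close the stability estimate are those already contained in properties (a)--(d) of Theorem \ref{th:y-exist_2}.
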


Before concluding this part, let us formulate an additional comment.

\begin{rmk} \label{r:geom-limit}
The geometric property \eqref{hyp:geom_lim} is an extra condition to be imposed on the target triplet $\big(\rho,u,\nabla\Pi\big)$. Indeed, as already observed,
this property does not follow from \eqref{hyp:geom_unif} by weak compactness arguments: we refer to Remark \ref{r:geom_unif_to-lim} for more comments
in this respect.

However, it is easy to see that, in the case in which \eqref{hyp:geom_unif} is replaced by the stronger condition
\begin{equation} \label{cond:strong_geom}
 \sup_{n\in\N}\left\|\d_{X_n}u_n\right\|_{L^{p_*}_T(L^\infty)}\,<\,+\,\infty\,,
\end{equation}
for some $p_*\in\,]1,+\infty]$, then one can guarantee that the constructed triplet  $\big(\rho,u,\nabla\Pi\big)$ also verifies
the property $\d_Xu\,\in\,L^1\big([0,T];L^\infty(\R^2)\big)$. We refer to Subsection \ref{ss:geom-limit} for more details about this.
\end{rmk}

%

%
%
%

The rest of this work is devoted to the proof of the previous statements.

\section{Uniform bounds} \label{s:uniform}

The assumptions of Theorems \ref{th:y-exist} and \ref{th:y-exist_2} already provide us with a sequence of regularised initial data
$\big(\rho_{0,n},u_{0,n}\big)_{n\in\N}$, satisfying the properties listed in Definition \ref{d:reg-datum}, and with the related sequence of
smooth solutions $\big(\rho_n,u_n,\nabla\Pi_n\big)_{n\in\N}$ to system \eqref{eq:dd-E}, defined on some time interval $[0,T_n[\,$.
Thus, those solutions satisfy, for any $n\in\N$ fixed, the system of equations
\begin{equation} \label{eq:dd-E_n}
\left\{\begin{array}{l}
\d_t\rho_n\,+\,u_n\cdot\nabla\rho_n\,=\,0 \\[1ex]
\rho_n\,\d_tu_n\,+\,\rho_n\,(u_n\cdot\nabla) u_n\,+\,\nabla\Pi_n\,=\,0 \\[1ex]
\div u_n\,=\,0\,.
       \end{array}
\right.
\end{equation}
In addition, according to \eqref{hyp:geom_unif}, we have that $T_n\geq T>0$ for all $n\in\N$ and,
after setting $X_n\,:=\,\nabla^\perp\rho_n$, with $\nabla^\perp\,=\,\big(-\d_2,\d_1\big)$, that
\begin{equation} \label{assump:weaker}
M\,:=\,\sup_{n\in\N}\;\int^T_0\left\|\d_{X_n}u_n(t)\right\|_{L^\infty}\,\dt\,<\,+\,\infty\,.
\end{equation}

The goal of this section is to exhibit \emph{uniform bounds}, in suitable norms, for $\big(\rho_n,u_n,\nabla\Pi_n\big)_{n\in\N}$.
This is the very first step in order to prove Theorems \ref{th:y-exist} and \ref{th:y-exist_2}.
In the first part of this section we will work with assumptions \tbf{(A1)}, \tbf{(A2)} and \tbf{(AE3)} only.
In the last part, namely Subsection \ref{ss:further-bounds} below, instead, we will replace assumptions \tbf{(AE3)} with \tbf{(AU3)}, and we will derive
further uniform bounds for the sequence of smooth approximate solutions.

Since most of the bounds will depend on the quantity $M$ defined in \eqref{assump:weaker}, from now on we will directly refer to that relation, instead of quoting
the original assumption \eqref{hyp:geom_unif} appearing in the statement of the theorems.

\subsection{The case of unbounded vorticities} \label{ss:unif-unbounded}

As announced above, 
we are now going to work under hypotheses \tbf{(A1)}, \tbf{(A2)} and \tbf{(AE3)}, plus of course \eqref{assump:weaker}.
In particular, in this part we do \emph{not} assume to dispose of a $L^\infty$ information for $\o_0$.
The properties of the approximate initial data listed in Definition \ref{d:reg-datum} will be freely used here.

\subsubsection{Transport and energy bounds}
First of all, thanks to assumption \tbf{(A1)} and the divergence-free condition over $u_n$,
from the transport equation satisfied by $\rho_n$ one immediately obtains that
\begin{equation} \label{est:rho-inf}
\forall\,n\in\N\,,\quad\forall\,(t,x)\in[0,T]\times\R^2\,,\qquad\qquad \rho_*-\veps_n\,\leq\,\rho_n(t,x)\,\leq\,\rho^*+\veps_n\,,
\end{equation}
for a suitable sequence $\big(\veps_n\big)_{n\in\N}$ of positive real numbers such that $\veps_n\longrightarrow0$ for $n\to+\infty$
(recall the first item in Definition \ref{d:reg-datum}).

Next, we perform an energy estimate on the momentum equation, namely the second equation appearing in system \eqref{eq:dd-E_n}.
Since the velocity field $u_n$ is divergence-free, the pressure term identically vanishes when multiplied in $L^2$ by $u_n$. Standard computations thus yield
\[
\frac{\dd}{\dd t}\int_{\R^2}\rho_n\,\left|u_n\right|^2\,\dd x\,=\,0\,,
\]
which implies, after an integration in time, the following equality, for all $t\in[0,T]$:
\[ 
\left\|\sqrt{\rho_n(t)}\,u_n(t)\right\|_{L^2}^2\,=\,\left\|\sqrt{\rho_{0,n}}\,u_{0,n}\right\|_{L^2}^2\,.
\] 
Using \eqref{est:rho-inf} and the properties on the initial datum, we immediately deduce that
\begin{equation} \label{est:u-2}
\big(u_n\big)_{n\in\N}\,\sqsubset\,L^\infty\big([0,T];L^2(\R^2)\big)\,,
\end{equation}
where we recall that the symbol $\sqsubset$ means that the sequence is not only included, but also \emph{bounded} in the respective space.

\subsubsection{The gradient of the densities}

After setting $X_n\,:=\,\nabla^\perp\rho_n$ as usual in this work, straightforward computations show that
the vector field $X_n$ is transported (in the sense of $1$-forms) by the flow of $u_n$, namely it solves
\begin{equation} \label{eq:X}
 \d_tX_n\,+\,(u_n\cdot\nabla) X_n\,=\,\d_{X_n}u_n\,.
\end{equation}

From classical transport estimates in Lebesgue spaces applied to equation \eqref{eq:X}, we deduce a bound for $\|X_n\|_{L^\infty}$,
whence for $\|\nabla\rho_n\|_{L^\infty}$: for any $t\in[0,T]$, one has
\[ 
\left\|\nabla\rho_n(t)\right\|_{L^\infty}\,\leq\,\left\|\nabla\rho_{0,n}\right\|_{L^\infty}\,+\,\int^t_0\left\|\d_{X_n}u_n(\t)\right\|_{L^\infty}\,\dd\tau\,.
\] 
Therefore, in view of assumption \eqref{assump:weaker}, we infer that
\begin{equation} \label{est:unif_D-rho}
\big(\nabla \rho_n\big)_{n\in\N}\,\sqsubset\,L^\infty\big([0,T]\times\R^2\big)\,.
\end{equation}

\subsubsection{Use of the momentum and its ``vorticity''}

Following \cite{F_2025}, 
we now consider the momentum $m_n\,:=\,\rho_n\,u_n$. Observe that the Helmholtz decomposition of $m_n$ writes as
\begin{equation} \label{eq:Helm-mom}
m_n\,=\,\rho_n\,u_n\,=\,-\,\nabla^\perp(-\Delta)^{-1}\eta_n\,-\,\nabla(-\Delta)^{-1}\big(u_n\cdot\nabla\rho_n\big)\,,
\end{equation}
where we have used the property $\div u_n\,=\,0$, so that $\div m_n\,=\,u_n\cdot\nabla\rho_n$,
and where we have defined the scalar field $\eta_n$ as the ``vorticity'' of the momentum, that is
\[
 \eta_n\,:=\,\curl m_n\,=\,\d_1\big(m_n^2\big)\,-\,\d_2\big(m_n^1\big)\,.
\]

We remark that we can recover the true vorticity of the fluid $\o_n$ from $\eta_n$ \tsl{via} the relation
\begin{equation} \label{eq:eta-vort}
 \eta_n\,=\,\rho_n\,\omega_n\,+\,u_n\cdot\nabla^\perp\rho_n\,.
\end{equation}
Thus, thanks to \eqref{est:rho-inf}, for any $n\in\N$ and any $p\in[1,+\infty]$, we can formally bound
\begin{align} \label{est:o_n-eta_n}
&\left\|\eta_n\right\|_{L^p}\,\lesssim\,\left\|\o_n\right\|_{L^p}\,+\,\left\|u_n\right\|_{L^p}\,\left\|\nabla\rho_n\right\|_{L^\infty} \\[1ex]
\nonumber
&\qquad\qquad \mbox{ and }\qquad\qquad
\left\|\o_n\right\|_{L^p}\,\lesssim\,\left\|\eta_n\right\|_{L^p}\,+\,\left\|u_n\right\|_{L^p}\,\left\|\nabla\rho_n\right\|_{L^\infty}\,,
\end{align}
for suitable implicit multipicative constants depending only on the positive numbers $\rho_*$ and $\rho^*$ appearing in assumption \tbf{(A1)}.

In addition, we see that, for any $n\in\N$, the vorticity $\eta_n$ of the momentum $m_n$ solves the transport equation
\begin{equation} \label{eq:eta}
 \d_t\eta_n\,+\,u_n\cdot\nabla\eta_n\,=\,\d_{X_n}u_n\cdot u_n\,.
\end{equation}
We now use the previous equation to establish suitable $L^p$ bounds for $\eta_n$.
More precisely, fixed $p_0>2$ as in assumption \tbf{(AE3)},
by transport estimates we infer that, for any $q\in[p_0,+\infty]$ and any $t\in[0,T]$, one has
\begin{equation} \label{est:eta-L_first}
\left\|\eta_n(t)\right\|_{L^{q}}\,\leq\,\left\|\eta_{0,n}\right\|_{L^{q}}\,+\,\int^t_0\left\|\d_{X_n} u_n(\t)\right\|_{L^\infty}\,
\left\|u_n(\t)\right\|_{L^{q}}\,\dt\,,
\end{equation}
where we have defined $\eta_{0,n}\,:=\,\curl\big(\rho_{0,n}\,u_{0,n}\big)$.
Owing to \eqref{est:o_n-eta_n}, from assumptions \tbf{(A1)} and \tbf{(A2)} and the condition $p_0>2$ we deduce that there exists a constant $C>0$ such that,
for any $q\in[p_0,+\infty]$, one formally has
\begin{equation} \label{est:eta-o_0}
\left\|\eta_{0,n}\right\|_{L^q}\,\leq\,\rho^*\,\left\|\o_{0,n}\right\|_{L^q}\,+\,
\left\|u_{0,n}\right\|_{L^2\cap L^\infty}\,\left\|\nabla\rho_{0,n}\right\|_{L^\infty}\,\leq\,C\,.
\end{equation}
Since we are working under assumption \tbf{(AE3)}, which ensures $\o_0\in L^{p_0}$ only, from now on we will use the previous estimate \eqref{est:eta-o_0}
and estimate \eqref{est:eta-L_first} with $q=p_0$.
However, their general forms with varying $q\in[p_0,+\infty]$ will be
useful later, when replacing \tbf{(AE3)} with \tbf{(AU3)}, see Subsection \ref{ss:further-bounds} below: there, we will assume also
$\o_0\in L^\infty$, hence we will be able to apply \eqref{est:eta-L_first} also for $q=+\infty$.

\subsubsection{Bounds for the velocities and the vorticities}
In view of \eqref{est:eta-L_first}, we need to find a bound for the velocity field $u_n$ in the space $L^\infty_T(L^{p_0})$.
Owing to \eqref{est:u-2}, it is enough to bound its $L^\infty$ norm (both in time and space).
For this, we will use the momentum $m_n$ and its vorticity $\eta_n$.

As a matter of fact, equality \eqref{eq:Helm-mom}, combined with the condition $p_0>2=d$, yields that,
for any $n\in\N$ and any $t\in[0,T]$ fixed (which we omit from the notation), we can write
\begin{align*}
 \left\|m_n\right\|_{L^\infty}\,&\lesssim\,\left\|m_n\right\|_{W^{1,p_0}}\,=\,\left\|\rho_n\,u_n\right\|_{W^{1,p_0}} \\
 &\lesssim\,\left\|\rho_n\,u_n\right\|_{L^{p_0}}\,+\,\left\|\eta_n\right\|_{L^{p_0}}\,+\,\left\|u_n\cdot\nabla\rho_n\right\|_{L^{p_0}}\,,
\end{align*}
where we have also used the Calder\'on-Zygmund theory to bound the $L^{p_0}$ norm of $\nabla m_n$ by the sum of the $L^{p_0}$ norms of $\eta_n$ and $u_n\cdot\nabla \rho_n$.
At this point, the H\"older inequality and an interpolation argument yield
\begin{align*}
\left\|m_n\right\|_{L^\infty}\,&\lesssim\,\rho^*\,\left\|u_n\right\|_{L^{p_0}}\,+\,
\left\|\eta_n\right\|_{L^{p_0}}\,+\,\left\|u_n\right\|_{L^{p_0}}\,\left\|\nabla\rho_n\right\|_{L^\infty} \\
&\lesssim\,\left\|u_n\right\|^\theta_{L^2}\,\left\|u_n\right\|^{1-\theta}_{L^\infty}\,+\,\left\|\eta_n\right\|_{L^{p_0}}\,,
\end{align*}
where the exponent $\theta$ is defined as $\theta\,=\,2/p_0\,\in\;]0,1[\,$. Notice that, in passing from the first inequality to the second one,
we have made use of the uniform bound established in \eqref{est:unif_D-rho}.

Remark that, owing to \eqref{est:rho-inf}, we have $\left\|u_n\right\|_{L^p}\,\approx\, \left\|m_n\right\|_{L^p}$ for any $p\in[2,+\infty]$.
This fact, together with the previous estimate for $m_n$ in $L^\infty$ and inequality \eqref{est:u-2}, yields, after an application of the Young inequality,
the bound
\begin{equation} \label{est:u-inf_prelim}
\left\|u_n\right\|_{L^\infty}\,\lesssim\,1\,+\,\left\|\eta_n\right\|_{L^{p_0}}\,,
\end{equation}
where the implicit multiplicative constant now depends not only on $\rho_*$ and $\rho^*$, but also on the norms $\|u_0\|_{L^2}$ and $\|\nabla\rho_0\|_{L^\infty}$,
and on the quantity $M$ defined in \eqref{assump:weaker}.
In particular, the previous estimate implies that
\[
 \left\|u_n\right\|_{L^{p_0}}\,\lesssim\,\left\|u_n\right\|_{L^2\cap L^\infty}\,\lesssim\,1\,+\,\left\|\eta_n\right\|_{L^{p_0}}\,.
\]

At this point, we can insert this last inequality into \eqref{est:eta-L_first}, where we take $q=p_0$. 
From an application of the Gr\"onwall lemma and assumption \eqref{assump:weaker}, we deduce that
\begin{equation} \label{est:eta_Leb}
\big(\eta_n\big)_{n\in\N}\,\sqsubset\,L^\infty\big([0,T];L^{p_0}(\R^2)\big)\,.
\end{equation}
Going back to \eqref{est:u-inf_prelim}, in turn we infer that
\begin{equation} \label{est:u-inf}
\big(u_n\big)_{n\in\N}\,\sqsubset\,L^\infty\big([0,T]\times \R^2\big)\,.
\end{equation}

Now, gathering estimates \eqref{est:unif_D-rho}, \eqref{est:o_n-eta_n}, \eqref{est:eta_Leb} and \eqref{est:u-inf} all together, we find
\begin{equation} \label{est:vort_Leb}
 \big(\o_n\big)_{n\in\N}\,\sqsubset\,L^\infty\big([0,T];L^{p_0}(\R^2)\big)\,.
\end{equation}
This uniform bound, together with \eqref{est:u-2} and \eqref{est:u-inf} again, in turn implies that
\begin{equation} \label{est:u_W^1}
 \big(u_n\big)_{n\in\N}\,\sqsubset\,L^\infty\big([0,T];W^{1,p_0}(\R^2)\big)\,.
\end{equation}

\subsubsection{Uniform bounds for the pressure gradients}
Observe that a pressure term appears in the vorticity equation \eqref{eq:vort}, hence
finding Yudovich-type solutions for the density-dependent system \eqref{eq:dd-E} requires to deal with the term $\nabla\Pi$.
Therefore, let us establish some uniform bounds for the sequence of $\nabla\Pi_n$'s.
To begin with, we recall that
\begin{align*}
&\big(u_n\big)_{n\in\N}\,\sqsubset\,L^\infty\left([0,T];L^2(\R^2)\cap L^\infty(\R^2)\right) 
\qquad \mbox{ and }\qquad
\big(\nabla u_n\big)_{n\in\N}\,\sqsubset\,L^\infty\big([0,T];L^{p_0}(\R^2)\big)\,.
\end{align*}
Hence, one has that the sequence of the products $(u_n\cdot\nabla)u_n$ is uniformly bounded in any space
$L^\infty_T(L^q)$, for any $q\in[q_0,p_0]$, where we have defined
\[
\frac{1}{q_0}\,:=\,\frac{1}{2}\,+\,\frac{1}{p_0}\,.
\]
Notice that, as $2<p_0<+\infty$, one has $1<q_0<2$. Therefore, in turn we deduce that
\begin{equation} \label{est:bilin-2}
 \big((u_n\cdot\nabla) u_n\big)_{n\in\N}\,\sqsubset\,L^\infty\big([0,T];L^2(\R^2)\big)\,.
\end{equation}
At this point, we can divide the momentum equation in \eqref{eq:dd-E_n} by $\rho_n$ and apply the divergence operator. We find that
$\nabla\Pi_n$ satisfies the following elliptic equation:
\[
 -\,\div\left(\frac{1}{\rho_n}\,\nabla\Pi_n\right)\,=\,\div\Big(u_n\cdot\nabla u_n\Big)\,.
\]
Thanks to Lemma 2 of \cite{D_2010} (based on an application of the Lax-Milgram theorem) and the uniform bounds established in \eqref{est:bilin-2},
we infer that
\begin{equation} \label{est:unif-press}
  \big(\nabla\Pi_n\big)_{n\in\N}\,\sqsubset\,L^\infty\big([0,T];L^2(\R^2)\big)\,.
\end{equation}

\subsection{Further estimates for bounded initial vorticities} \label{ss:further-bounds}

In this part, we replace assumption \tbf{(AE3)} with assumption \tbf{(AU3)}. With respect to the previous subsection, this simply amounts
to suppose furthermore that $\o_0$ belongs to $L^\infty$. This enables us to derive
additional uniform bounds for $\big(\rho_n,u_n,\nabla\Pi_n\big)_{n\in\N}$.

First of all, we see that, similarly to \eqref{est:eta_Leb}, combining \eqref{est:eta-L_first}, \eqref{est:eta-o_0} and \eqref{est:u-inf} all together yields
\begin{equation} \label{est:eta_inf}
 \big(\eta_n\big)_{n\in\N}\,\sqsubset\,L^\infty\big([0,T]\times\R^2\big)\,.
\end{equation}
Using this information and taking advantage of \eqref{est:o_n-eta_n}, \eqref{est:u-inf} and \eqref{est:unif_D-rho}, we deduce a similar property for the vorticity
fields $\o_n$: we have
\begin{equation} \label{est:o_inf}
 \big(\o_n\big)_{n\in\N}\,\sqsubset\,L^\infty\big([0,T]\times\R^2\big)\,.
\end{equation}

Now, by Calder\'on-Zygmund theory, the bounds in \eqref{est:vort_Leb} and \eqref{est:o_inf} imply that
\[
 \big(\nabla u_n\big)_{n\in\N}\,\sqsubset\,\bigcap_{p_0\leq q<+\infty\,} L^\infty\big([0,T];L^q(\R^2)\big)\,.
\]
Thanks to this and using \eqref{est:unif_D-rho} again, we obtain the following property:
\begin{equation} \label{est:geom-unif}
 \big(\d_{X_n}u_n\big)_{n\in\N}\,\sqsubset\,\bigcap_{p_0\leq q<+\infty\,} L^\infty\big([0,T];L^q(\R^2)\big)\,\cap\,L^1\big([0,T];L^\infty(\R^2)\big)\,,
\end{equation}
where the last property obviously follows from assumption \eqref{assump:weaker}.

\begin{rmk} \label{r:geom_unif_to-lim}
The last information $\big(\d_{X_n}u_n\big)_{n\in\N}\,\sqsubset\,L^1_T(L^\infty)$ appearing in \eqref{est:geom-unif} has not good properties when
passing to the limit for $n\to+\infty$. However, as it appears clear from the computations of Subsection \ref{ss:unif-unbounded}, the property
$\d_Xu\in L^1_T(L^\infty)$ plays a major role for establishing uniform bounds; analogously, it will be a key ingredient also for proving uniqueness.
This justifies the extra condition \eqref{hyp:geom_lim} in Theorem \ref{th:yudovich}.
\end{rmk}

On the other hand, let us also formulate the next observation, related to Remark \ref{r:geom-limit}.

\begin{rmk} \label{r:geom_unif_stronger}
If assumption \eqref{assump:weaker}, that is \eqref{hyp:geom_unif}, was replaced by the stronger
assumption \eqref{cond:strong_geom} which appears in that remark, then property \eqref{est:geom-unif}
would become
\begin{equation} \label{est:geom_unif-str}
 \big(\d_{X_n}u_n\big)_{n\in\N}\,\sqsubset\,\bigcap_{p_0\leq q<+\infty\,} L^\infty\big([0,T];L^q(\R^2)\big)\,\cap\,L^{p_*}\big([0,T];L^\infty(\R^2)\big)\,.
\end{equation}
In this case, we would be able to \emph{deduce} property \eqref{hyp:geom_lim} for the solution we are going to construct. We refer to
Proposition \ref{p:geom-lim_tot} below for more details.
\end{rmk}

\section{Proof of Theorem \ref{th:y-exist}: convergence} \label{s:exist}

In this section, we pass to the limit $n\to+\infty$ in the weak formulation of equations \eqref{eq:dd-E_n} and prove Theorem \ref{th:y-exist}.
First of all, in Subsection \ref{ss:limit} we identify the limit profiles $\big(\rho,u,\nabla\Pi\big)$. Then, in Subsection \ref{ss:limit-eq}
we show convergence in the mass and momentum equations, thus proving that the triplet $\big(\rho,u,\nabla\Pi\big)$ is indeed a solution of \eqref{eq:dd-E}
verifying the properties listed in Theorem \ref{th:y-exist}. In Subsection \ref{ss:vort-eq}, instead, we discuss the validity of the vorticity equation
\eqref{eq:vort}.
Notice that, in all these arguments, we will use only the uniform boundedness properties established
in Subsection \ref{ss:unif-unbounded}; in particular, we do not assume $\o_0\in L^\infty$ in this part.

Finally, Subsection \ref{ss:geom-limit} discusses the properties which can be inferred on the geometric quantity $\d_Xu$ in the limit. As strictly related
to this discussion, at some point we will show upgrades of those properties which can be obtained assuming also the property $\o_0\in L^\infty$,
and we will justify the claim of Remark \ref{r:geom-limit}.

\subsection{Identification of the limit profiles} \label{ss:limit}

As a preliminary step, we need to establish weak and strong convergence properties for the family of smooth approximate solutions
$\big(\rho_n,u_n,\nabla\Pi_n\big)_{n\in\N}$, towards some limit profile $\big(\rho,u,\nabla\Pi\big)$.
This is the goal of the present subsection.

\subsubsection{Convergence of the density functions}
Let us focus on the density functions first. From the uniform properties \eqref{est:rho-inf} and \eqref{est:unif_D-rho} and an inspection
of the mass equation, also based on \eqref{est:u-inf}, we deduce that there exists
a function
\begin{align*}
& \rho\,\in\,W^{1,\infty}\big([0,T]\times\R^2\big)\,, 
\qquad\qquad \mbox{ with } \qquad 0<\rho_*\leq\rho\leq\rho^*\,,
\end{align*}
such that, up to the extraction of a suitable subsequence, one has
\begin{equation} \label{conv:dens-weak}
 \rho_n\,\stackrel{*}{\wtend}\,\rho \qquad \mbox{ and }\qquad \nabla\rho_n\,\stackrel{*}{\wtend}\,\nabla\rho \qquad\qquad \mbox{ in }
\qquad L^\infty\big([0,T]\times\R^2\big)\,.
\end{equation}

Moreover, from the equation
\[ 
 \d_t\rho_n\,=\,-\,\div\big(\rho_n\,u_n\big)
\] 
and properties \eqref{est:rho-inf} and \eqref{est:u-2}, we deduce that $\big(\d_t\rho_n\big)_{n\in\N}$ is bounded in $L^\infty\big([0,T];H^{-1}(\R^2)\big)$.
As, for any compact set $K\subset \R^2$, one has $W^{1,\infty}(K)\subset H^1(K)$ and the compact embedding $H^1(K)\hookrightarrow\hookrightarrow L^2(K)$,
from Ascoli-Arzel\`a theorem one deduces the strong convergence
\begin{equation} \label{conv:dens-strong}
 \rho_n\,\longrightarrow\,\rho\qquad\qquad \mbox{ in }\qquad \mc C\big([0,T];L^2_{\rm loc}(\R^2)\big)\,.
\end{equation}
Notice that, owing to the uniform bound of the $\rho_n$'s in $L^\infty\big([0,T]\times\R^2\big)$, by interpolation one also infers the strong convergence
\begin{equation} \label{conv:dens-more}
\forall\,p\in[2,+\infty[\;,\qquad\qquad\qquad \rho_n\,\longrightarrow\,\rho\qquad\qquad \mbox{ in }\qquad \mc C\big([0,T];L^p_{\rm loc}(\R^2)\big)\,.
\end{equation}

For later use, we conclude this part by mentioning another strong convergence result concerning the density functions.
For any $n\in\N$, define
\[
 a_n\,:=\,\frac{1}{\rho_n}\,.
\]
Using that $\nabla a_n\,=\,-\,\rho_n^{-2}\,\nabla\rho_n$ and
repeating \tsl{mutatis mutandis} the same arguments as above, one easily deduces also that
\begin{equation} \label{conv:inv-dens-str}
\frac{1}{\rho_n}\,=\,a_n\,\longrightarrow\,a\,=\,\frac{1}{\rho}\qquad\qquad \mbox{ in }\qquad \mc C\big([0,T];L^2_{\rm loc}(\R^2)\big)\,.
\end{equation}

\subsubsection{Weak convergence of the velocities and pressure gradients}

Next, focus on the velocity fields $u_n$. From \eqref{est:u-2} we know that there exists a target profile $u\in L^\infty\big([0,T];L^2(\R^2)\big)$
such that, up to the extraction of a suitable subsequence, one has
\begin{equation} \label{conv:u-2}
 u_n\,\stackrel{*}{\wtend}\,u\qquad\qquad \mbox{ in }\qquad L^\infty\big([0,T];L^2(\R^2)\big)\,.
\end{equation}
At the same time, by \eqref{est:vort_Leb}, one also knows that
\begin{equation} \label{conv:o}
 \o_n\,\stackrel{*}{\wtend}\,\o\qquad\qquad \mbox{ in }\qquad L^\infty\big([0,T];L^{p_0}(\R^2)\big)\,,
\end{equation}
for a suitable scalar function $\o$ belonging to that space.
Of course, owing to the linearity of the Biot-Savart law, we deduce that
\[
 \o\,=\,\curl u\,.
\]

\begin{rmk} \label{r:o-limit}
Should one have assumed $\o_0\in L^\infty$, then, in addition, one would have got
$\o\in L^\infty\big([0,T]\times\R^2\big)$, owing to \eqref{est:o_inf} and the uniqueness of the weak limit.
\end{rmk}

Finally, by \eqref{est:u-inf} and \eqref{est:u_W^1} and the uniqueness of the weak limit, we infer that
$u$ belongs also to $L^\infty\big([0,T]\times\R^2\big)$ and $L^\infty\big([0,T];W^{1,p_0}(\R^2)\big)$
and that the weak-$*$ convergence holds true also in those spaces:
\begin{equation} \label{conv:u-more}
 u_n\,\stackrel{*}{\wtend}\,u\qquad\qquad \mbox{ in }\qquad L^\infty\big([0,T]\times\R^2\big)\qquad \mbox{ and }\qquad L^\infty\big([0,T];W^{1,p_0}(\R^2)\big)\,.
\end{equation}

Before going on, let us focus on \eqref{est:unif-press}: from it, we deduce the existence of a gradient $\nabla\Pi$
in the space $L^\infty\big([0,T];L^2(\R^2)\big)$ such that (omitting another extraction) one has
\begin{equation} \label{conv:press-L^2}
 \nabla\Pi_n\,\stackrel{*}{\wtend}\,\nabla\Pi\qquad\qquad \mbox{ in }\qquad L^\infty\big([0,T];L^2(\R^2)\big)\,.
\end{equation}
The (important) consequence of this latter property is twofold. On the one had, it allows us to pass to the limit in the weak formulation of
equations \eqref{eq:dd-E_n} without requiring the divergence-free condition on the test-function used for the momentum equation.
On the other hand, it permits to pass to the limit also in the vorticity equation related to \eqref{eq:dd-E_n}, which we will do in Subsection \ref{ss:vort-eq} below.

\subsubsection{Strong convergence of the velocity fields}

Let us go back to \eqref{est:unif-press} for a while. 
Another important consequence of it is that it allows to establish strong convergence properties for the sequence of velocity fields.
To this scope, we rewrite the momentum equation in \eqref{eq:dd-E_n} as
\begin{equation} \label{eq:d_tu_n}
 \d_tu_n\,=\,-\,(u_n\cdot\nabla) u_n\,-\,\frac{1}{\rho_n}\,\nabla\Pi_n\,.
\end{equation}
Owing to \eqref{est:rho-inf}, \eqref{est:bilin-2} and \eqref{est:unif-press}, from it we deduce that $\big(\d_tu_n\big)_{n\in\N}$ is a uniformly bounded sequence in
the space $L^\infty\big([0,T];L^2(\R^2)\big)$. This implies that
\[
 \big(u_n\big)_n\,\sqsubset\,W^{1,\infty}\big([0,T];L^2(\R^2)\big)\,\cap\,L^\infty\big([0,T];W^{1,p_0}(\R^2)\big)\,,
\]
where we have also used \eqref{est:u_W^1}. Since $p_0>2$, we have that $W^{1,p_0}_{\rm loc}\hookrightarrow H^1_{\loc}$. Hence, the compact
embedding $H^1_\loc\hookrightarrow\hookrightarrow L^2_\loc$ and the Ascoli-Arzel\`a theorem imply
(up to an omitted extraction) the strong convergence property
\begin{equation} \label{conv:strong-u}
u_n\,\longrightarrow\,u\qquad\qquad \mbox{ in }\qquad \mc C\big([0,T];L^2_\loc(\R^2)\big)\,.
\end{equation}

\subsection{Passing to the limit in the equations} \label{ss:limit-eq}

In this subsection, we use the previous convergence properties to pass to the limit in the weak formulation of equations \eqref{eq:dd-E_n}, and
in turn prove that the identified target profile $\big(\rho,u,\nabla\Pi\big)$ is a Yudovich-type solution of system \eqref{eq:dd-E}.
In order to get closer to Yudovich theory, in Subsection \ref{ss:vort-eq} we discuss the validity of the vorticity formulation of
the momentum equation, see \eqref{eq:vort}.

\subsubsection{The limit of the mass equation} \label{sss:dens-eq}

We start by dealing with the mass conservation equation, namely the first equation appearing in \eqref{eq:dd-E_n}.
Let us fix a scalar test-function $\phi\in \mc D\big(\R_+\times\R^2\big)$ such that $\supp\phi\subset[0,T]\times K$, for
a comptact set $K\subset\R^2$. Notice that, in particular, one has $\phi(T)\equiv0$.
We want to pass to the limit in the relation
\begin{align}
\label{eq:weak-form-dens}
&-\int^T_0\int_{\R^2}\rho_n\,\d_t\phi\,\dx\,\dt\,-\,\int^T_0\int_{\R^2}\rho_n\,u_n\cdot\nabla\phi\,\dx\,\dt\,=\,
\int_{\R^2}\rho_{0,n}\,\phi(0,\cdot)\,\dx\,.
\end{align}

The convergence of the initial datum is straightforward, thanks to the properties listed in Definition \ref{d:reg-datum} and implicitly assumed
in Theorem \ref{th:y-exist}.
Also the convergence of the the first term on the left-hand side is easy, because this term is linear: for it,
one can use \eqref{conv:dens-weak}, for instance.
For the non-linear term $\rho_n\,u_n$ appearing in \eqref{eq:weak-form-dens},
instead, we can use the weak-$*$ convergences \eqref{conv:u-2} and \eqref{conv:u-more} for the velocities,
together with the strong convergence \eqref{conv:dens-more} for the densities, to get that
\begin{equation} \label{conv:momentum}
 \rho_n\,u_n\,\stackrel{*}{\wtend}\,\rho\,u\qquad\qquad \mbox{ in }\qquad L^\infty\big([0,T];L^2(K)\big)\,,
\end{equation}
where we recall that $K$ is the (compact) spatial support of the test-function $\phi$ fixed above.

In the end, we have proven that the target profiles $\rho$ and $u$ satisfy the equation
\begin{align*}
&-\int^T_0\int_{\R^2}\rho\,\d_t\phi\,\dx\,\dt\,-\,\int^T_0\int_{\R^2}\rho\,u\cdot\nabla\phi\,\dx\,\dt\,=\,
\int_{\R^2}\rho_{0}\,\phi(0,\cdot)\,\dx
\end{align*}
for any test-function $\phi\in \mc D\big(\R_+\times\R^2\big)$ with compact support in $[0,T]\,\times\R^2$. In other words, they solve the mass equation in \eqref{eq:dd-E}
in the weak form.

\subsubsection{The limit of the momentum equation} \label{sss:vel-eq}

Next, we pass to the limit in the weak formulation of the momentum equation in \eqref{eq:dd-E_n}. For this, differently from what is classically done
in incompressible fluid dynamics,
we use test-functions which are not necessarily divergence-free. This is possible thanks to the regularity properties established for the pressure gradients and,
at the same time, it is important in order to recover also the vorticity formulation of the momentum equation.

So, let us take a smooth vector field $\psi\in \mc D\big(\R_+\times\R^2\big)$, such that $\supp\psi\subset[0,T]\times K$, for some compact set $K\subset\R^2$.
Observe that, as before, one has $\psi(T)\equiv0$. Testing the second equation in \eqref{eq:dd-E_n} against such a $\psi$ yields
\begin{align}
\label{eq:weak-mom}
&-\int^T_0\int_{\R^2}\rho_n\,u_n\cdot\d_t\psi\,\dx\,\dt\,-\,\int^T_0\int_{\R^2}\rho_n\,u_n\otimes u_n:\nabla\psi\,\dx\,\dt \\
\nonumber
&\qquad\qquad\qquad\qquad\qquad +\int^T_0\int_{\R^2}\nabla\Pi_n\cdot\psi\,\dx\,\dt\,=\,\int_{\R^2}\rho_{0,n}\,u_{0,n}\cdot\psi(0,\cdot)\,\dx\,.
\end{align}

Once again, the convergence of the term depending on the initial datum is easy, owing to the approximation properties listed in Definition \ref{d:reg-datum}.
The convergence of the pressure term, instead, simply relies on \eqref{conv:press-L^2}, as this term is linear. In addition, the term presenting the
time derivative $\d_t\psi$
can be dealt with by invoking the convergence property established in \eqref{conv:momentum}.

So, we only have to deal with the limit for $n\to+\infty$ of the convective term $\rho_n u_n\otimes u_n$. We claim that it converges
to the expected limit $\rho u\otimes u$. For showing this, we decompose the difference of the two terms as follows:
\begin{align*}
\rho_n\,u_n\otimes u_n\,-\,\rho\,u\otimes u\,&=\,\Big(\rho_n\,-\,\rho\Big)\,u_n\otimes u_n\,+\,\rho\,\Big(u_n\,-\,u\Big)\otimes u_n\,+\,
\rho\,u\otimes\Big(u_n\,-\,u\Big) \\
&=\,I_1\,+\,I_2\,+\,I_3\,.
\end{align*}
We now treat each term separately. First of all, thanks to the weak convergence property \eqref{conv:u-2} and the fact that
both $\rho$ and $u$ are bounded on $[0,T]\times\R^2$ (keep in mind \eqref{conv:dens-weak} and \eqref{conv:u-more} above), we see that
\[
 I_3\,\stackrel{*}{\wtend}\,0\qquad\qquad \mbox{ in }\qquad L^\infty\big([0,T];L^2(\R^2)\big)\,.
\]
Next, we deal with $I_1$. Recall that $K$ denotes the compact set of $\R^2$ such that $\supp\psi\subset[0,T]\times K$. Thanks to the boundedness property
\eqref{est:u-inf} and the convergence \eqref{conv:dens-strong}, we find
\[
 \left\|I_1\right\|_{L^\infty_T(L^2(K))}\,\leq\,\left\|\rho_n-\rho\right\|_{L^\infty_T(L^2(K))}\,\left\|u_n\right\|_{L^\infty_T(L^\infty)}^2\,\lesssim\,
\left\|\rho_n-\rho\right\|_{L^\infty_T(L^2(K))}\,, 
\]
thus $I_1$ converges to $0$ in $L^\infty_T(L^2_\loc)$, when $n\to+\infty$.
Similarly, for $I_2$ we can write
\[
 \left\|I_2\right\|_{L^\infty_T(L^2(K))}\,\leq\,\rho^*\,\left\|u_n-u\right\|_{L^\infty_T(L^2(K))}\,\left\|u_n\right\|_{L^\infty_T(L^\infty)}\,\lesssim\,
\left\|u_n-u\right\|_{L^\infty_T(L^2(K))}\,\longrightarrow\,0\,,
\]
where the norm convergence follows from \eqref{conv:strong-u}.
In the end, we have proven that
\[
 \rho_n\,u_n\otimes u_n\,\longrightarrow\,\rho\,u\otimes u\qquad\qquad \mbox{ in }\qquad \mc D'\big([0,T]\times\R^2\big)\,.
\]
In turn, this property implies that $\big(\rho,u,\nabla\Pi\big)$ solves the weak formulation of the momentum equation in \eqref{eq:dd-E}, as claimed.

\medbreak
All in all, we have shown that the triplet $\big(\rho,u,\nabla\Pi\big)$ identified in Subsection \ref{ss:limit} is a solution of system \eqref{eq:dd-E}.
In order to complete the proof of Theorem \ref{th:y-exist}, we still have to prove that the vorticity equation \eqref{eq:vort}
holds true in the weak sense: we will deal with this issue in the next subsection.

\subsection{The vorticity equation} \label{ss:vort-eq}

We want to discuss here the validity of the vorticty equation, in order to make more explicit the link of the solutions
we have constructed with the classical Yudovich theory for the homogeneous Euler equations \eqref{eq:hom-E}.

First of all, we have shown above that the triplet $\big(\rho,u,\nabla\Pi\big)$ solves the momentum equation in the weak sense.
Next, we observe that
\[
 \d_t\rho\,=\,-\,\div\big(\rho\,u\big)\,=\,-\,u\cdot\nabla\rho\;\in\,L^\infty\big([0,T];L^{p_0}(\R^2)\big)\,,
\]
owing to the fact that $u$ belongs to $L^\infty_T(W^{1,p_0})$ and $\nabla\rho$ belongs to $L^\infty_T(L^\infty)$.
Therefore, for $\psi\in \mc D\big(\R_+\times\R^2\big)$ as in Paragraph \ref{sss:vel-eq} (so, in particular,
such that $\psi(T)\equiv0$), it makes sense to use $\psi/\rho$ as a test function in the momentum equation.

At the same time, because $u\in L^\infty_T(W^{1,p_0})$ and $\rho\in L^\infty_T(L^\infty)$,
we can apply \tsl{e.g.} Theorem 10.29 of \cite{F-N} and see that $\rho$ is a renormalised solution of the transport equation
$\d_t\rho+u\cdot\nabla\rho=0$. Therefore $a\,:=\,1/\rho$ satisfies the same transport equation, namely
\[
 \d_t\left(\frac{1}{\rho}\right)\,+\,u\cdot\nabla\left(\frac{1}{\rho}\right)\,=\,0\,.
\]
Notice that this could have even been proved directly, by passing to the limit (as done in Paragraph \ref{sss:dens-eq})
in the equation for the $a_n$'s and using the strong convergence property \eqref{conv:inv-dens-str}.

Hence, using the weak form \eqref{eq:weak-mom} of the momentum equation with $\psi/\rho$ as a test-function, and making use of the above transport equation for $1/\rho$,
one immediately sees that the triplet $\big(\rho,u,\nabla\Pi\big)$ also solves, in the weak sense, the equation
\begin{equation} \label{eq:u_second}
 \d_tu\,+\,(u\cdot\nabla) u\,+\,\frac{1}{\rho}\,\nabla\Pi\,=\,0\,.
\end{equation}
At this point, using $\nabla^\perp\phi$ as a test-function in this equation, for $\phi\in \mc D\big(\R_+\times\R^2\big)$, with
$\supp\phi\subset[0,T]\times\R^2$ as in Paragraph \ref{sss:dens-eq}, we see that the vorticity
$\o\,=\,\curl u$ is indeed a weak solution of equation \eqref{eq:vort}.

\medbreak
Before concluding this part, it is worth noticing that the same property could have been proved even more directly, more in the spirit
of the Yudovich proof. Namely, we could have considered the vorticity formulation of the equation for $\big(\rho_n,u_n,\nabla\Pi_n\big)$, \tsl{i.e.} the relation
\begin{equation} \label{eq:vort_n}
 \d_t\o_n\,+\,u_n\cdot\nabla\o_n\,+\,\nabla^\perp\left(\frac{1}{\rho_n}\right)\cdot\nabla\Pi_n\,=\,0\,,
\end{equation}
where $\o_n\,:=\,\curl u_n$ is the vorticity of the velocity field $u_n$, and we could have proved the convergence, for $n\to+\infty$,
of this equation to the weak form of \eqref{eq:vort}.

To this scope, we remark that, with respect to the arguments used in Subsection \ref{ss:limit-eq},
the only problematic term looks to be the density-pressure term, which is non-linear and for which we have established no compactness properties
(nor $\nabla\rho_n$ nor $\nabla\Pi_n$ are compact) so far.
Instead, one has to use a key cancellation property, which implies that
\[
 \nabla^\perp\left(\frac{1}{\rho_n}\right)\cdot\nabla\Pi_n\,=\,\curl\left(\frac{1}{\rho_n}\,\nabla\Pi_n\right)
\]
and integrate by parts: thanks to \eqref{conv:inv-dens-str}, this term in fact converges to the expected limit.

\subsection{Geometric regularity} \label{ss:geom-limit}

In this part, we establish important properties on the geometric quantity $\d_Xu$ which was involved (at the level of approximate solutions) in
assumption \eqref{hyp:geom_unif}.
We start by stating a simple lemma. 

\begin{lemma} \label{l:geom-lim_1}
Under assumptions {\rm \tbf{(A1)}}, {\rm \tbf{(A2)}}, {\rm \tbf{(AE3)}} and \eqref{hyp:geom_unif},
let $\big(\rho,u,\nabla\Pi\big)$ the solution to system \eqref{eq:dd-E} constructed above. Set $X\,:=\,\nabla^\perp\rho$.

Then $\d_Xu$ belongs to $\mc M\big([0,T];L^\infty(\R^2)\big)\,\cap\,L^\infty\big([0,T];L^{p_0}(\R^2\big)$, where $\mc M(I)$ denotes the set of
Radon measures over some interval $I\subset\R$. Moreover, one has the weak-$*$ convergence
\[
 \d_{X_n}u_n\,\stackrel{*}{\wtend}\,\d_Xu\qquad\qquad \mbox{ in }\qquad L^\infty\big([0,T];L^{p_0}(\R^2\big)\,. 
\]
If we replace assumption {\rm \tbf{(AE3)}} with {\rm \tbf{(AU3)}}, namely if in addition one has $\o_0\,\in\,L^\infty(\R^2)$,
then $\d_Xu$ belongs to $L^\infty\big([0,T];L^q(\R^2)\big)$ for any $q\in[p_0,+\infty[\,$ and the convergence holds true also
in the weak-$*$ topology of those spaces. 
\end{lemma}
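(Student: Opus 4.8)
The plan is to combine the uniform bounds from Section \ref{s:uniform} with the strong convergence of the densities and the weak-$*$ convergence of the velocity gradients, identifying the limit of the nonlinear product $\d_{X_n}u_n = (X_n\cdot\nabla)u_n$ by a standard ``compact times weakly convergent'' argument. First I would record the two ingredients already available: from \eqref{est:u_W^1} one has $\nabla u_n \stackrel{*}{\wtend}\nabla u$ in $L^\infty_T(L^{p_0})$, while $X_n=\nabla^\perp\rho_n$ converges \emph{strongly} in $\mc C\big([0,T];L^2_\loc\big)$ (this follows from \eqref{conv:dens-strong} applied to the spatial derivatives of $\rho_n$, or directly from the transport equation \eqref{eq:X} together with the uniform bound \eqref{est:unif_D-rho} on $\d_t X_n$ in a negative Sobolev space, giving Aubin--Lions compactness exactly as for the densities). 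The product of a strongly $L^2_\loc$-convergent factor with a weakly $L^{p_0}$-convergent factor converges in $\mc D'$ to the product of the limits; since $\frac1{p_0}+\frac12<1$, the Hölder exponents match and the limit $(X\cdot\nabla)u=\d_Xu$ is well defined a.e.

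Having identified the distributional limit, I would upgrade it. The uniform bound \eqref{est:geom-unif} (valid already under \tbf{(AE3)} in the weaker form $\big(\d_{X_n}u_n\big)_{n}\sqsubset L^\infty_T(L^{p_0})$, obtained from $\|X_n\|_{L^\infty_T(L^\infty)}$ via \eqref{est:unif_D-rho} and $\|\nabla u_n\|_{L^\infty_T(L^{p_0})}$ via \eqref{est:u_W^1}) shows the sequence is bounded in $L^\infty_T(L^{p_0})$; by Banach--Alaoglu it has a weak-$*$ limit in that space, which by uniqueness of distributional limits must coincide with $\d_Xu$. This simultaneously proves $\d_Xu\in L^\infty\big([0,T];L^{p_0}(\R^2)\big)$ and the claimed weak-$*$ convergence. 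For the measure-valued time regularity, I would use the uniform bound on $\int_0^T\|\d_{X_n}u_n(t)\|_{L^\infty}\dt$ from \eqref{assump:weaker}: viewing $t\mapsto\d_{X_n}u_n(t)$ as elements of the dual of $L^1\big([0,T];L^1\cap L^{p_0'}\big)$-type space, the uniform $L^1_T(L^\infty)$ bound gives boundedness in $\mc M\big([0,T];L^\infty(\R^2)\big)$ (the dual of $\mc C_0\big([0,T];L^1\big)$), so a further extraction yields a weak-$*$ limit in that space of Radon measures, again necessarily equal to $\d_Xu$.

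For the improvement under \tbf{(AU3)}, the extra input is \eqref{est:o_inf}, giving $\big(\o_n\big)_n\sqsubset L^\infty_T(L^\infty)$, hence by Calderón--Zygmund interpolation $\big(\nabla u_n\big)_n\sqsubset L^\infty_T(L^q)$ for every $q\in[p_0,+\infty[$; multiplying by the $L^\infty_T(L^\infty)$ bound on $X_n$ yields the uniform bound in $L^\infty_T(L^q)$ recorded in \eqref{est:geom-unif}. Repeating the Banach--Alaoglu plus uniqueness-of-limit argument in each such space gives $\d_Xu\in L^\infty\big([0,T];L^q(\R^2)\big)$ with weak-$*$ convergence, for all finite $q\ge p_0$.

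The main obstacle is the justification that the product $X_n\cdot\nabla u_n$ really passes to the limit as $X\cdot\nabla u$: this is precisely where strong compactness of $X_n$ is indispensable, since $\nabla u_n$ converges only weakly and no compactness on $\nabla u_n$ is (nor can be) available in the Yudovich setting. I would therefore take care to prove the strong $L^2_\loc$ convergence of $X_n$ cleanly — either by differentiating \eqref{conv:dens-strong} is \emph{not} legitimate, so I would instead run Aubin--Lions directly on $X_n$ using \eqref{eq:X}, the uniform bound \eqref{est:unif_D-rho} in space, and the fact that $\d_t X_n=\d_{X_n}u_n-(u_n\cdot\nabla)X_n$ is uniformly bounded in, say, $L^1_T(L^{p_0})+L^\infty_T(W^{-1,p_0})$, which supplies the needed equicontinuity in time. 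Everything else is bookkeeping with weak-$*$ limits and uniqueness of distributional limits.
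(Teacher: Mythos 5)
There is a genuine gap at the heart of your argument: the claimed strong convergence $X_n\to X$ in $\mc C\big([0,T];L^2_\loc(\R^2)\big)$ cannot be established with the available bounds, and in fact the paper explicitly points out that neither the $X_n$'s nor the $\nabla u_n$'s are compact in any sense. Your proposed Aubin--Lions argument needs a \emph{compact} spatial embedding $B_0\hookrightarrow\hookrightarrow L^2(K)$ for a space $B_0$ in which $X_n$ is uniformly bounded; but the only uniform spatial information on $X_n=\nabla^\perp\rho_n$ is the $L^\infty$ bound \eqref{est:unif_D-rho}, and $L^\infty(K)$ does not embed compactly into $L^2(K)$ (think of $\sin(nx_1)$ times a cutoff: bounded in $L^\infty$, time-independent, no strongly convergent subsequence in $L^2_\loc$). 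Spatial compactness of $X_n$ would require uniform control of $\nabla X_n$, i.e.\ of second derivatives of $\rho_n$, which is precisely what the Yudovich-type framework forbids. Time equicontinuity in a negative Sobolev space only upgrades weak-$*$ convergence to strong convergence in spaces like $\mc C_T(H^{-s}_\loc)$, which is useless for multiplying against $\nabla u_n\in L^{p_0}$. So the ``compact times weak'' step, which you correctly identify as the crux, does not go through as written.

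The paper's way around this is a cancellation you did not use: since $\div X_n=0$, one writes $\d_{X_n}u_n=\div\big(X_n\otimes u_n\big)$, thereby moving the derivative off $u_n$ and outside the product as a distributional divergence. The product $X_n\otimes u_n$ then pairs the weak-$*$ convergence \eqref{conv:dens-weak} of $X_n$ in $L^\infty$ with the \emph{strong} convergence \eqref{conv:strong-u} of $u_n$ in $\mc C_T(L^2_\loc)$ --- a strong convergence that, unlike the one you postulate for $X_n$, is actually available (it comes from the uniform bound on $\d_tu_n$ via the momentum equation and the compact embedding $W^{1,p_0}_\loc\hookrightarrow\hookrightarrow L^2_\loc$). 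This identifies the limit in $\mc D'$, after which your remaining steps (Banach--Alaoglu in $L^\infty_T(L^{p_0})$ plus uniqueness of the distributional limit, the $L^1_T(L^\infty)$ bound giving the $\mc M\big([0,T];L^\infty\big)$ membership, and the upgrade to $L^\infty_T(L^q)$ for all finite $q\geq p_0$ under \tbf{(AU3)} via \eqref{est:geom-unif}) coincide with the paper's and are fine.
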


\begin{proof}
We start the proof by observing that the target quantity $\d_Xu\,=\,(X\cdot\nabla)u$ belongs to $L^\infty_T(L^{p_0})$, owing to the fact that
$X\,=\,\nabla^\perp\rho$ is bounded over $[0,T]\times\R^2$ and $\nabla u$ belongs to $L^\infty_T(L^{p_0})$: recall \eqref{conv:dens-weak}
and \eqref{conv:u-more}, respectively.

Next, we want to show that $\big(\d_{X_n}u_n\big)_{n\in\N}$ converges to $\d_Xu$ in the weak-$*$ topology of $L^\infty_T(L^{p_0})$. Here, we face a similar
difficulty as the one discussed after \eqref{eq:vort_n}, as nor the $X_n$'s, nor the $\nabla u_n$'s are compact in any sense. However, also in this case
we can take advantage of a fundamental cancellation: owing to the fact that $\div X_n=0$ for any $n\in\N$, we can write
\[
 \d_{X_n}u_n\,=\,\div\big(X_n\otimes u_n\big)\,=\,\d_1\big(X_n^1\,u_n\big)\,+\,\d_2\big(X_n^2\,u_n\big)\,.
\]
Thus, we can use the weak convergence \eqref{conv:dens-weak} and the strong convergence \eqref{conv:strong-u} to deduce that
\[
 \d_{X_n}u_n\,\longrightarrow\,\d_Xu\qquad\qquad \mbox{ in }\qquad \mc D'\big([0,T]\times\R^2\big)\,.
\]
On the other hand, it follows from \eqref{est:unif_D-rho} and \eqref{est:u_W^1} that $\big(\d_{X_n}u_n\big)_{n\in\N}\,\sqsubset\,L^\infty_T(L^{p_0})$,
so this sequence must converge (up to a suitable extraction) to some vector field $\xi\,\in\,L^\infty_T(L^{p_0})$ in the weak-$*$ topology of that space.
Now, the uniqueness of the weak limit implies that $\xi\,\equiv\,\d_Xu$ almost everywhere on $[0,T]\times\R^2$, which in turn also yields the convergence
$\d_{X_n}u_n\,\stackrel{*}{\wtend}\,\d_Xu$ in $L^\infty_T(L^{p_0})$.

The assertion $\d_Xu\,\in\,\mc M\big([0,T];L^\infty(\R^2)\big)$ follows from the uniform boundedness property
$\big(\d_{X_n}u_n\big)_{n\in\N}\,\sqsubset\,L^1_T(L^\infty)$, instrinsic of assumption \eqref{hyp:geom_unif}.

Finally, the last sentence of the statement is a straightforward consequence of \eqref{est:geom-unif},
combined with the uniqueness of the weak limit. This completes the proof of the lemma.
\end{proof}

It is important to remark that, in the previous lemma, we fail to get the property $\d_Xu\,\in\,L^1_T(L^\infty)$, which is expected to play a key role in the proof
of uniqueness though. See also the comments in Remark \ref{r:geom_unif_to-lim} above.

However, we now show that, replacing assumption \eqref{hyp:geom_unif} with \eqref{cond:strong_geom},
namely further assuming (with respect to the statement of Lemma \ref{l:geom-lim_1})
that $\big(\d_{X_n}u_n\big)_{n\in\N}\,\sqsubset\,L^{p_*}_T(L^\infty)$, for some $p_*>1$,
we can establish the sought property $\d_Xu\,\in\,L^1_T(L^\infty)$.
This will justify the claim contained in Remark \ref{r:geom-limit}.

\begin{prop} \label{p:geom-lim_tot}
Assume {\rm \tbf{(A1)}}, {\rm \tbf{(A2)}}, {\rm \tbf{(AE3)}} and  \eqref{cond:strong_geom}.
Let $\big(\rho,u,\nabla\Pi\big)$ the solution to system \eqref{eq:dd-E} constructed above, and define $X\,:=\,\nabla^\perp\rho$.

Then, the geometric quantity $\d_Xu$ belongs to $L^{p_*}\big([0,T];L^{\infty}(\R^2)\big)$, with the same $p_*>1$ appearing in condition \eqref{cond:strong_geom}.
In particular, one has the property $\d_Xu\,\in\,L^1\big([0,T];L^\infty(\R^2)\big)$.
\end{prop}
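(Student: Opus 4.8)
The plan is to upgrade the weak-$*$ convergence $\d_{X_n}u_n\,\stackrel{*}{\wtend}\,\d_Xu$ in $L^\infty_T(L^{p_0})$ established in Lemma \ref{l:geom-lim_1} to a statement that transfers the uniform $L^{p_*}_T(L^\infty)$ bound of condition \eqref{cond:strong_geom} onto the limit $\d_Xu$. The heart of the matter is a lower semicontinuity argument: the $L^{p_*}\big([0,T];L^\infty(\R^2)\big)$ norm is lower semicontinuous with respect to the weak-$*$ convergence we have at our disposal, so the uniform bound $\sup_n\|\d_{X_n}u_n\|_{L^{p_*}_T(L^\infty)}<+\infty$ will pass to the limit and yield $\|\d_Xu\|_{L^{p_*}_T(L^\infty)}<+\infty$. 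The final assertion $\d_Xu\in L^1_T(L^\infty)$ is then immediate from the continuous embedding $L^{p_*}\big([0,T];L^\infty\big)\hookrightarrow L^1\big([0,T];L^\infty\big)$ on the bounded interval $[0,T]$, via H\"older's inequality in time since $p_*>1$.

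To make the lower semicontinuity precise, the cleanest route I would take is duality. Since the convergence we genuinely control is only in $\mc D'\big([0,T]\times\R^2\big)$ together with the uniform bound in $\bigcap_{p_0\le q<+\infty}L^\infty_T(L^q)$ coming from \eqref{est:geom_unif-str} of Remark \ref{r:geom_unif_stronger}, I would argue through the predual of $L^{p_*}_T(L^\infty)$. Concretely, I would test $\d_{X_n}u_n$ against functions $g\in L^{p_*'}\big([0,T];L^1(\R^2)\big)$ (with $1/p_*+1/p_*'=1$), write the pairing $\int_0^T\int_{\R^2}\d_{X_n}u_n\cdot g\,\dx\,\dt$, and pass to the limit using the distributional convergence $\d_{X_n}u_n\,\longrightarrow\,\d_Xu$ from Lemma \ref{l:geom-lim_1}. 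For each such fixed, sufficiently smooth $g$ with compact support, this pairing converges to $\int_0^T\int_{\R^2}\d_Xu\cdot g\,\dx\,\dt$, and each approximate pairing is bounded by $\|g\|_{L^{p_*'}_T(L^1)}\,\sup_n\|\d_{X_n}u_n\|_{L^{p_*}_T(L^\infty)}$. Taking the supremum over $g$ in the unit ball of (a dense subset of) $L^{p_*'}\big([0,T];L^1(\R^2)\big)$ identifies $\d_Xu$ as an element of the dual $L^{p_*}\big([0,T];L^\infty(\R^2)\big)$ with norm controlled by the same uniform constant.

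The main delicate point, and the step I expect to require the most care, is the duality identification itself: the space $L^{p_*}\big([0,T];L^\infty(\R^2)\big)$ is \emph{not} reflexive, and $L^\infty(\R^2)$ is not the dual of a separable space in a way that makes weak-$*$ compactness entirely automatic. One must check that the bilinear pairing defined by the limit object on the predual $L^{p_*'}\big([0,T];L^1(\R^2)\big)$ is genuinely realised by an $L^{p_*}_T(L^\infty)$ function, rather than merely by a measure in time as in the weaker conclusion of Lemma \ref{l:geom-lim_1}. The gain over the $L^1_T(L^\infty)$-failure in that lemma is precisely the strict inequality $p_*>1$: it is this that guarantees $L^{p_*'}_T(L^1)$ has $L^{p_*}_T(L^\infty)$ as dual (up to the usual subtleties of the $L^\infty$ factor, which are handled because we already know $\d_Xu\in L^\infty_T(L^{p_0})$ is a bona fide function, not just a measure). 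Once the pairing is shown to be continuous on this predual and the a priori bound is transferred, the representation theorem pins down $\d_Xu$ in $L^{p_*}_T(L^\infty)$ with $\|\d_Xu\|_{L^{p_*}_T(L^\infty)}\le\sup_n\|\d_{X_n}u_n\|_{L^{p_*}_T(L^\infty)}<+\infty$, and the embedding on the finite time interval finishes the proof.
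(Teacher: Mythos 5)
Your proof is correct and follows essentially the same route as the paper: the paper's own argument is a two-line appeal to the uniform bound \eqref{est:geom_unif-str} in $L^{p_*}_T(L^\infty)$ combined with uniqueness of the weak limit, and your duality/lower-semicontinuity argument (testing against the predual $L^{p_*'}_T(L^1)$ and using the distributional convergence from Lemma \ref{l:geom-lim_1}) is precisely the detailed justification of that step, including the correct observation that $p_*>1$ is what restores the dual-space structure lost in the $L^1_T(L^\infty)$ endpoint.
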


\begin{proof}
The claimed property is a direct consequence of \eqref{est:geom_unif-str}, which implies that
$\big(\d_{X_n}u_n\big)_{n\in\N}$ is uniformly bounded in the space $L^{p_*}_T(L^\infty)$,
combined with the uniqueness of the weak limit.
\end{proof}

\section{Uniqueness of Yudovich-type solutions} \label{s:uniqueness}

In this section, we tackle the question of uniqueness of solutions to system \eqref{eq:dd-E} at the given level of regularity.
More precisely, we first prove Theorem \ref{th:y-exist_2} (which is an improved version of Theorem \ref{th:y-exist}),
together with further regularity properties of the constructed solutions
under the additional assumption that the initial vorticity is also bounded.
This will be the matter of Subsection \ref{ss:further}.
Then, in Subsection \ref{ss:unique} we move to the proof of the uniqueness statement, that is Theorem \ref{th:yudovich},
which storngly relies on the property $\o_0\in L^\infty$ and on the geometric assumption \eqref{hyp:geom_lim}.

Throughout this section, we replace assumption \tbf{(AE3)} with the stronger \tbf{(AU3)}, which postulates in addition that
\[
 \o_0\,\in\,L^\infty(\R^2)\,.
\]
As already observed in Remark \ref{r:o-limit}, under this assumption, 
the construction given in Section \ref{s:exist} allows us to deduce the additional property
\begin{equation} \label{eq:o-bounded}
 \o\,\in\,L^\infty\big([0,T];L^\infty(\R^2)\big)\,.
\end{equation}
As it was already the case for the classical Yudovich theory, this is a key information
for establishing suitable regularity of the velocity field, in particular for obtaining the existence of a unique flow associated to it,
and for proving uniqueness.

%

\subsection{Further properties of the solutions} \label{ss:further}

In the present subsection, we establish further properties on the constructed solution $\big(\rho,u,\nabla\Pi\big)$.
These are key pieces of information in order to prove (in Subsection \ref{ss:unique}) the uniqueness of Yudovich-type solutions.
Differently from what done in Subsections \ref{ss:further-bounds} and \ref{ss:geom-limit},
these properties are not inferred from the convergence of the family of approximate solutions, but rather derived directly
from the equations, by using the additional bound \eqref{eq:o-bounded}.

In doing so, however, we will also complete the proof of Theorem \ref{th:y-exist_2}.
Notice that the boundedness of the vorticity $\o$ has just been established in \eqref{eq:o-bounded}.
Hence, thanks to the arguments developed in Sections \ref{s:uniform} and \ref{s:exist}, in order to complete the proof of Theorem \ref{th:y-exist_2}
we only need to show the following facts: 
\begin{itemize}
 \item the boundedness of $\nabla\Pi$, claimed in item (d) of the statement;
 \item the property $u\in L^\infty\big([0,T];\mc Z(\R^2)\big)$, claimed in item (b) of the statement.
\end{itemize}


Let us start the discussion by recalling a couple of basic definitions. First of all, we give the precise definition of the Zygmund class $\mc Z$
mentioned in the statement of Theorem \ref{th:yudovich}.

\begin{defin} \label{def:Z}
A function $f\in L^\infty(\R^d)$ is said to be \emph{Zygmund} continuous, and we write $f\in \mc Z(\R^d)$, if the quantity
\[
|f|_{\mc Z}\,:=\,\sup_{z,y\in\R^d,\,0<|y|<1}
\left(\frac{\left|f(z+y)\,+\,f(z-y)\,-\,2\,f(z)\right|}{|y|}\right)\,<\,+\infty\,.
\]
We define the Zygmund norm as $\|f\|_{\mc Z}\,:=\,\|f\|_{L^\infty}\,+\,|f|_{\mc Z}$.
\end{defin}

Observe that $\mc Z$ coincides with the Besov space $B^1_{\infty,\infty}$ (see Chapter 2 of \cite{BCD} for a proof).
By Proposition 2.107 of \cite{BCD}, one has that $\mc Z(\R^d)$ is embedded in the space $LL(\R^d)$ of
log-Lipschitz functions, whose definition is recalled below.

\begin{defin} \label{def:LL}
A function $f\in L^\infty(\R^d)$ is said to be \emph{log-Lipschitz} continuous, and we write $f\in LL(\R^d)$, if the quantity
\[
|f|_{LL}\,:=\,\sup_{z,y\in\R^d,\,0<|y|<1}
\left(\frac{\left|f(z+y)\,-\,f(z)\right|}{|y|\,\log\left(1\,+\,\frac{1}{|y|}\right)}\right)\,<\,+\infty\,.
\]
We define the log-Lipschitz norm as $\|f\|_{LL}\,:=\,\|f\|_{L^\infty}\,+\,|f|_{LL}$.
\end{defin}

We want to recall that vector fields which are $L^1_T(LL)$ admit a uniqueley defined flow, see \tsl{e.g.} Chapter 3 of \cite{BCD} for details.

\medbreak
After these preliminaries, we can state the following static result, which in particular implies the validity of the property claimed
in item (b) of Theorem \ref{th:y-exist_2}.
In this result, we will use the fact $u\in L^\infty(\R^2)$; besides, this will dispense us from imposing
a low index integrability condition on the vorticity field  (in our
statement, we only take $p_0>2$), which is instead required in the classical theory, see \tsl{e.g.} Lemma 8.1 of \cite{Maj-Bert}.
Notice that, with respect to the previous reference, we slightly improve the regularity
of the velocity field, by showing that $u$ is Zygmund continuous.

\begin{prop} \label{p:u-Zyg}
Let $u$ be a vector field in $L^\infty(\R^2)$ such that $\div u=0$. Let $\o\,=\,\curl u\,=\,\d_1u^1-\d_2u^1$ be its vorticity. Assume that
$\o\in L^\infty(\R^2)$.

Then $u\in \mc Z(\R^2)$.
\end{prop}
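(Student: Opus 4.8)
The plan is to exploit the identification $\mc Z(\R^2)=B^1_{\infty,\infty}(\R^2)$ recalled right after Definition \ref{def:Z}, and to estimate directly the dyadic blocks of $u$ in a nonhomogeneous Littlewood-Paley decomposition. Writing $\big(\Delta_j\big)_{j\geq-1}$ for the associated projectors, as in Chapter 2 of \cite{BCD} (here the subscript distinguishes the blocks from the Laplacian $\Delta$), the Zygmund norm is equivalent to
\[
\|u\|_{\mc Z}\,\approx\,\left\|\Delta_{-1}u\right\|_{L^\infty}\,+\,\sup_{j\geq0}2^{j}\left\|\Delta_ju\right\|_{L^\infty}\,,
\]
so it suffices to control each of these two contributions by $\|u\|_{L^\infty}+\|\o\|_{L^\infty}$. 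The low-frequency term is immediate, since $\Delta_{-1}$ is convolution against a fixed $L^1$ kernel, whence $\left\|\Delta_{-1}u\right\|_{L^\infty}\lesssim\|u\|_{L^\infty}$.

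For the high frequencies, the key ingredient is the pointwise algebraic identity
\[
\Delta u\,=\,\nabla^\perp\o\,,
\]
valid in $\mc S'(\R^2)$ and obtained by combining the two scalar relations $\div u=0$ and $\o=\curl u$: indeed $\Delta u^1=\d_1^2u^1+\d_2^2u^1=-\d_1\d_2u^2+\big(\d_1\d_2u^2-\d_2\o\big)=-\d_2\o$, and symmetrically $\Delta u^2=\d_1\o$. Fixing $j\geq0$ and using that the symbol of $\Delta_j$ is supported away from the origin, one may invert the Laplacian blockwise and write $\Delta_ju=-\,\Delta_j(-\Delta)^{-1}\nabla^\perp\o$. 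The multiplier $\Delta_j(-\Delta)^{-1}\nabla^\perp$ has a smooth symbol supported in the annulus $|\xi|\sim2^j$, homogeneous of degree $-1$ there; by a scaling argument its convolution kernel $K_j$ satisfies $\|K_j\|_{L^1}\lesssim2^{-j}$, so Young's inequality yields
\[
\left\|\Delta_ju\right\|_{L^\infty}\,=\,\left\|K_j*\o\right\|_{L^\infty}\,\leq\,\|K_j\|_{L^1}\,\|\o\|_{L^\infty}\,\lesssim\,2^{-j}\,\|\o\|_{L^\infty}\,.
\]
Multiplying by $2^j$ and taking the supremum over $j\geq0$ gives the remaining contribution, hence $\|u\|_{\mc Z}\lesssim\|u\|_{L^\infty}+\|\o\|_{L^\infty}<+\infty$, which is the desired conclusion.

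The only delicate point is that $u$ is merely bounded and need \emph{not} decay at infinity, so $(-\Delta)^{-1}$ is not globally defined and one cannot simply write $u=-\nabla^\perp(-\Delta)^{-1}\o$ as in the Biot--Savart law \eqref{eq:BS-law}. This difficulty is, however, entirely bypassed by working blockwise: for each fixed $j\geq0$ the operator $\Delta_j(-\Delta)^{-1}$ is an honest Fourier multiplier with smooth symbol supported away from the origin, so it acts on the tempered distribution $\nabla^\perp\o$ without any decay or integrability requirement, and the identity $\Delta u=\nabla^\perp\o$ is invoked only after this localisation. This completes the proof.
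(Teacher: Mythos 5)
Your proof is correct, but it takes precisely the route that the paper deliberately sidesteps. The paper's own proof opens by acknowledging that the result ``would be easy by working with the Besov characterisation of the Zygmund class, that is $\mc Z(\R^d)\equiv B^1_{\infty,\infty}(\R^d)$'', and then explicitly chooses to avoid any Littlewood--Paley theory: it works directly on the Biot--Savart representation \eqref{eq:BS-law}, estimating the second difference $K(x+he-y)+K(x-he-y)-2K(x-y)$ of the kernel $K(z)=z^\perp/|z|^2$ by splitting $\R^2$ into a far region $\{|x-y|\geq 2r\}$ (Taylor expansion of the smooth kernel, giving $h^2/r+\dots$) and a near region $\{|x-y|<2r\}$ (local integrability of $K$, giving $r+h$), and optimising with $r=h$. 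Your argument instead derives the identity $\Delta u=\nabla^\perp\o$ from $\div u=0$ and $\o=\curl u$, inverts the Laplacian blockwise, and reads off the $B^1_{\infty,\infty}$ norm from the dyadic decay $\|\Delta_ju\|_{L^\infty}\lesssim 2^{-j}\|\o\|_{L^\infty}$; all the steps (the scaling bound $\|K_j\|_{L^1}\lesssim 2^{-j}$, Young's inequality, the low-frequency block) are standard and sound. What each approach buys: yours is shorter, works verbatim in any dimension, and handles cleanly the fact that $u$ is merely bounded with no decay --- the blockwise inversion makes no use of a global representation formula, whereas the paper's proof nominally writes $u$ as the Biot--Savart integral of $\o$, which for $\o$ merely in $L^\infty$ is not an absolutely convergent integral (only the second differences are controlled). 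The paper's approach, on the other hand, is entirely elementary and self-contained in the spirit of \cite{Maj-Bert}, requiring no Fourier-analytic machinery beyond potential estimates. Both proofs are valid; yours is simply the ``easy'' one the author alludes to and then declines to write.
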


\begin{proof}
The proof of the previous property, which by the way holds true in any space dimension $d\geq2$, would be easy by working with the Besov characterisation of the Zygmund class, that is $\mc Z(\R^d)\equiv B^1_{\infty,\infty}(\R^d)$. However, in order to avoid any use of the Littlewood-Paley theory here,
we prove the result by directly working on the Biot-Savart law \eqref{eq:BS-law} and performing potential estimates, in the spirit of \cite{Maj-Bert}.
For this reason, we stick to the two-dimensional case in our argument.

So, given a unitary vector $e\in \mbb{S}^1$ and $x\in\R^2$, we look for an estimate of the type
\begin{equation} \label{est:Z-to-do}
 \Big|u(x+he)\,+\,u(x-he)\,-\,2\,u(x)\Big|\,\leq\,C\,|h|\,,
\end{equation}
for $0<h<1$ and for a suitable constant $C>0$ independent of $x$, $e$ and $h$.
We now use the Biot-Savart law to express the previous quantity in a different way.

According to \eqref{eq:BS-law}, the Biot-Savart law writes
\[
 u\,=\,\frac{1}{2\pi}\,\int_{\R^2} K(x-y)\,\o(y)\,\dd y\,,\qquad\qquad \mbox{ with }\qquad  K(z)\,:=\,\frac{1}{|z|^2}\,z^\perp\,.
\]
With this notation, and using that $\o\in L^\infty(\R^2)$, the left-hand side of \eqref{est:Z-to-do} can be bounded in the following way:
\begin{align*}
 &\Big|u(x+he)\,+\,u(x-he)\,-\,2\,u(x)\Big| \\
&\qquad\qquad\qquad\leq\,\left\|\o\right\|_{L^\infty}\,\int_{\R^2}\Big|K(x+he-y)\,+\,K(x-he-y)\,-\,2\,K(x-y)\Big|\,\dd y\,.
\end{align*}
Observing that the kernel $K$ associated to the Biot-Savart law is smooth far from the origin, we split the integral on the right-hand side of the previous equality
in the two regions 
\[
 D_1\,:=\,\big\{|x-y|\geq 2r\big\}\qquad\qquad \mbox{ and }\qquad\qquad
 D_2\,:=\,\big\{|x-y|< 2r\big\}\,,
\]
with $r>0$ to be chosen later on (in fact, we will see that $r=h$ is the optimal choice).

For $r$ sufficiently large, namely $r\geq h$, 
we have that $x\pm he-y\geq r$ on the set $D_1$.
At the same time, using the smoothness of $K$ far from the origin, by a Taylor expansion up to the fourth order it is a routine matter to get
\[
 \Big|K(x+he-y)\,+\,K(x-he-y)\,-\,2\,K(x-y)\Big|\,\leq\,h^2\,\Big|\nabla^2K(x-y)\Big|\,+\,C\,h^4\,\Big|\nabla^4K(\xi)\Big|\,,
\]
for a suitable $\xi$ belonging to the segment joining the points $x-he-y$ and $x+he-y$.
Therefore, using that $\big|\nabla^nK(z)\big|\,\lesssim\,|z|^{-(n+1)}$ and that $|\xi|\,\geq\,|x-y|-h\,\geq\,r$, passing in polar coordinates one easily see that
\begin{align}
\label{est:D_1}
&\int_{D_1}\Big|K(x+he-y)\,+\,K(x-he-y)\,-\,2\,K(x-y)\Big|\,\dd y \\
\nonumber
&\qquad\qquad\qquad\qquad\qquad\qquad\qquad \lesssim\,h^2\,\int^{+\infty}_{2r}\frac{1}{\s^2}\,\dd\s\,+\,
h^4\,\int^{+\infty}_{2r}\frac{1}{(\s-h)^5}\,\s\,\dd\s \\
\nonumber
&\qquad\qquad\qquad\qquad\qquad\qquad\qquad \lesssim\,\frac{h^2}{r}\,+\,\frac{h^4}{r^3}\,+\,\frac{h^5}{r^4}\,.
\end{align}

Let us now focus on the integral over $D_2$. In that region, we can use that the kernel $K$ is integrable close to the origin.
Observe that, on $D_2$, we have $|x\pm he -y|\leq|x-y|+h\leq 2r+h$ by triangular inequality. Thus, by translation invariance of the integral,
we are led to the following estimate:
\begin{align}
\label{est:D_2}
&\int_{D_2}\Big|K(x+he-y)\,+\,K(x-he-y)\,-\,2\,K(x-y)\Big|\,\dd y \\
\nonumber
&\qquad\qquad\qquad\qquad\qquad
\lesssim\,\int_{\big\{|z|\leq 2r+h\big\}}\frac{1}{|z|}\,\dd z\,+\,\int_{\big\{|z|\leq 2r\big\}}\frac{1}{|z|}\,\dd z\;\lesssim\;r\,+\,h\,.
\end{align}

Putting estimates \eqref{est:D_1} and \eqref{est:D_2} together and (as anticipated above) choosing $r=h$, we finally obtain the sought inequality
\eqref{est:Z-to-do}, which in turn implies that $u$ belongs to the Zygmund class. This completes the proof of the proposition.
\end{proof}

As already pointed out before, from Proposition \ref{p:u-Zyg} we deduce the following result, which yields, in particular,
item (b) appearing in the statement of Theorem \ref{th:yudovich}.

\begin{cor} \label{c:flow}
 Let $u$ be a divergence-free velocity field satisfying the property of item {\rm (ii)} in Theorem \ref{th:y-exist} and such that
its vorticity $\o\,:=\,\curl u$ verifies \eqref{eq:o-bounded}.

Then $u$ belongs to the space
$L^\infty\big([0,T];\mc Z(\R^2)\big)$. In particular, $u\,\in\,L^\infty\big([0,T];LL(\R^2)\big)$,
hence it admits a unique flow map.
\end{cor}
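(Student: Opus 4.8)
The plan is to reduce the statement, for each fixed time, to the static result established in Proposition \ref{p:u-Zyg}, and then to upgrade it to a uniform-in-time bound so as to land in $L^\infty\big([0,T];\mc Z(\R^2)\big)$. First I would fix a time $t\in[0,T]$ and check that $u(t)$ meets the three hypotheses of Proposition \ref{p:u-Zyg}: it is divergence-free by assumption, it lies in $L^\infty(\R^2)$ thanks to item (ii) of Theorem \ref{th:y-exist}, and its vorticity $\o(t)=\curl u(t)$ lies in $L^\infty(\R^2)$ thanks to \eqref{eq:o-bounded}. Proposition \ref{p:u-Zyg} then yields $u(t)\in\mc Z(\R^2)$ for (almost) every $t$.

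The second step is to make this uniform in time. Inspecting the proof of Proposition \ref{p:u-Zyg}, the constant $C$ appearing in \eqref{est:Z-to-do} is a universal multiple of $\|\o(t)\|_{L^\infty}$; hence the Zygmund seminorm obeys $|u(t)|_{\mc Z}\,\lesssim\,\|\o(t)\|_{L^\infty}$. Combining this with the $L^\infty$ part of the norm, one obtains the pointwise-in-time estimate $\|u(t)\|_{\mc Z}\,=\,\|u(t)\|_{L^\infty}+|u(t)|_{\mc Z}\,\lesssim\,\|u(t)\|_{L^\infty}+\|\o(t)\|_{L^\infty}$. Taking the essential supremum over $t\in[0,T]$ and invoking item (ii) together with \eqref{eq:o-bounded} gives $u\in L^\infty\big([0,T];\mc Z(\R^2)\big)$, which is precisely item (b) of Theorem \ref{th:y-exist_2}.

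Finally, I would conclude by recalling the two facts already recorded in the text: the embedding $\mc Z(\R^2)\hookrightarrow LL(\R^2)$ (Proposition 2.107 of \cite{BCD}), which immediately upgrades the previous display to $u\in L^\infty\big([0,T];LL(\R^2)\big)$, and the Cauchy--Lipschitz theory for log-Lipschitz vector fields (Chapter 3 of \cite{BCD}), which guarantees that an $L^1\big([0,T];LL(\R^2)\big)$ field---\textsl{a fortiori} an $L^\infty\big([0,T];LL(\R^2)\big)$ one on the finite interval $[0,T]$---admits a unique flow map. This closes the argument.

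There is essentially no obstacle of substance here: the entire analytic content is carried by Proposition \ref{p:u-Zyg}, and the only mild care required is to ensure that the constant in that proposition depends on $u(t)$ only through $\|\o(t)\|_{L^\infty}$, so that the resulting bound is genuinely uniform in $t$ rather than merely pointwise. Since the proof of Proposition \ref{p:u-Zyg} produces exactly such a constant, the uniformity is automatic, and the role of the corollary is mainly to package the static estimate into the time-dependent functional framework and to record the consequence on the flow.
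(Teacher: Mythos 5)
Your proposal is correct and follows exactly the route the paper takes: the corollary is stated there as an immediate consequence of Proposition \ref{p:u-Zyg} applied at each fixed time (with the constant depending only on $\left\|\o(t)\right\|_{L^\infty}$, hence uniform in $t$), followed by the embedding $\mc Z(\R^2)\hookrightarrow LL(\R^2)$ and the classical existence of a unique flow for log-Lipschitz vector fields. Nothing is missing.
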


Next, we deal with the regularity of the pressure gradient: the following result establishes the property $\nabla\Pi\in L^\infty$.
\begin{prop} \label{p:press-inf}
Let $\big(\rho,u,\nabla\Pi)$ be a solution to system \eqref{eq:dd-E} enjoying the properties listed in
items {\rm(i)} to {\rm(iv)} of Theorem \ref{th:y-exist}, plus \eqref{eq:o-bounded}.

Then one has $\nabla\Pi\,\in\,L^\infty\big([0,T]\times \R^2\big)$.
\end{prop}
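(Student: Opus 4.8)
The plan is to reduce the variable-coefficient elliptic problem for the pressure to a constant-coefficient Poisson equation and then read off the $L^\infty$ bound from the explicit two-dimensional Newtonian potential. Writing $a:=1/\rho$ and $F:=(u\cdot\nabla)u$, the pressure solves $-\div(a\nabla\Pi)=\div F$. Since $\div u=0$ we have $F=\div(u\otimes u)$ and, more importantly, $\div F=\sum_{j,k}\d_ju^k\,\d_ku^j$, which is a genuine function carrying no further derivative and whose integrability will be the decisive point. The inputs I would record first are: $u\in L^\infty_T(L^2\cap L^\infty)$ and, from $\o\in L^\infty_T(L^{p_0}\cap L^\infty)$ together with Calder\'on--Zygmund theory, $\nabla u\in L^\infty_T(L^q)$ for every $q\in[p_0,+\infty[$. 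Consequently $F\in L^\infty_T(L^q)$ for every $q\in[q_0,+\infty[$, where $1/q_0=1/2+1/p_0$ (using $u\in L^2$ at the low end and $u\in L^\infty$ at the high end), and crucially $q_0<2$; likewise $\div F\in L^\infty_T(L^r)$ for every $r\in[p_0/2,+\infty[$.

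The second step is to upgrade $\nabla\Pi$ from the $L^2$ bound already known from \eqref{est:unif-press} to an $L^q$ range straddling the exponent $2$. As $a=1/\rho\in W^{1,\infty}$ is uniformly continuous and bounded above and below by $1/\rho^*$ and $1/\rho_*$, the operator $-\div(a\nabla\cdot)$ enjoys the full $L^q$ Calder\'on--Zygmund theory for $1<q<+\infty$; combined with $F\in L^\infty_T(L^{q_0}\cap L^\tau)$ this yields $\nabla\Pi\in L^\infty_T(L^q)$ for all $q\in[q_0,+\infty[$, and in particular $\nabla\Pi\in L^\infty_T(L^\sigma\cap L^\tau)$ for some $\sigma<2$ and all $\tau>2$.

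Next I would pass to constant coefficients: expanding $-\div(a\nabla\Pi)=-a\Delta\Pi-\nabla a\cdot\nabla\Pi$ and multiplying by $\rho$ gives the Poisson equation $-\Delta\Pi=g$ with $g:=\rho\,\div F-\rho^{-1}\,\nabla\rho\cdot\nabla\Pi$, whence $\nabla\Pi=\nabla(-\Delta)^{-1}g$. In two dimensions the kernel of $\nabla(-\Delta)^{-1}$ behaves like $|x|^{-1}$, so by splitting the convolution into $|x-y|<1$ and $|x-y|\ge1$ and applying H\"older's inequality, this operator maps $L^\sigma\cap L^\tau$ into $L^\infty$ for any $1\le\sigma<2<\tau\le+\infty$ (the condition $\sigma<2$ controls the far field, $\tau>2$ the near field). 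It therefore suffices to place $g$ in such an intersection, uniformly in $t$. For the linear piece $\rho^{-1}\nabla\rho\cdot\nabla\Pi$ this is immediate, since $\nabla\rho/\rho\in L^\infty$ and $\nabla\Pi$ was just placed in $L^\sigma\cap L^\tau$ with $\sigma<2<\tau$; the large-exponent part of the nonlinear piece $\rho\,\div F\in L^\tau$, $\tau>2$, is equally harmless.

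The one delicate point, and the only place where the hypothesis $p_0\le4$ is used, is the low-integrability ($\sigma<2$) control of the nonlinear source $\rho\,\div F=\rho\sum_{j,k}\d_ju^k\,\d_ku^j$. Since $\nabla u$ lies a priori only in $L^{p_0}$, this product belongs only to $L^{p_0/2}$, and the requirement $\sigma<2$ is exactly $p_0/2<2$, that is $p_0<4$; the borderline $p_0=4$ has to be closed by exploiting the extra decay carried by $u\in L^2$, which keeps $F=(u\cdot\nabla)u$ itself in $L^\infty_T(L^{q_0})$ with $q_0<2$ and allows the far-field contribution to be recovered through a refinement of the potential estimate rather than through the crude product bound. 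Gathering these facts places $g$ in $L^\sigma\cap L^\tau$ with $\sigma<2<\tau$, so that $\nabla\Pi=\nabla(-\Delta)^{-1}g\in L^\infty\big([0,T]\times\R^2\big)$, which is the assertion. I expect the main obstacle to be precisely the coupling introduced by the variable coefficient: the source $g$ contains $\nabla\Pi$ itself, so the Newtonian-potential representation cannot be used in isolation and must be preceded by the $L^q$-bootstrap of the second step; the accompanying borderline integrability of the quadratic term $\d u\,\d u$ is what pins down the technical restriction $p_0\le4$.
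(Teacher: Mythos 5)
Your reduction to the elliptic equation and the identification of the two source terms (the quadratic term in $\nabla u$ and the coupling term $\rho^{-1}\nabla\rho\cdot\nabla\Pi$) match the paper, but your route to $L^\infty$ has two genuine gaps. First, you need $\nabla\Pi\in L^{q}$ for some $q<2$ (to control the far field of the Newtonian potential), and you obtain it by asserting that $-\div(a\nabla\cdot)$ on all of $\R^2$ "enjoys the full $L^q$ Calder\'on--Zygmund theory for $1<q<+\infty$". This is not a standard fact in the form you use it: the paper is deliberately careful to invoke only the $L^2$ well-posedness of the variable-coefficient problem (Lemma 2 of \cite{D_2010}, via Lax--Milgram), precisely because size-independent $L^q$ estimates for $\div(a\nabla\cdot)$ with merely bounded (or even Lipschitz) $a$, on the whole space and for $q$ below $2$, require a separate argument that you do not supply. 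Second, and more seriously, your scheme needs the full source $g$ in $L^\sigma\cap L^\tau$ with $\sigma<2<\tau$, but the quadratic term $\rho\,\nabla u:\nabla u$ lies only in $L^{q}$ for $q\geq p_0/2$, so the requirement $\sigma<2$ forces $p_0<4$ strictly, whereas the statement allows $p_0=4$. Your proposed repair for the borderline case --- writing the term as $\div\big((u\cdot\nabla)u\big)$ with $(u\cdot\nabla)u\in L^{q_0}$, $q_0<2$, and "refining the potential estimate" --- does not work: the resulting operator $\nabla\nabla(-\Delta)^{-1}$ is a genuine singular integral whose near-field part is not controlled by any $L^\tau$ norm of the datum, $\tau\leq+\infty$; this is exactly the obstruction ($L^\infty$-unboundedness of zero-order Calder\'on--Zygmund operators) that the paper emphasises in the introduction.

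The paper's proof avoids both issues by never going below integrability $2$. From the rewritten Poisson equation $-\Delta\Pi=-\rho^{-1}\nabla\rho\cdot\nabla\Pi+\rho\,\nabla u:\nabla u$ it first places the right-hand side in $L^2$ (here $p_0\leq4$, i.e.\ $p_0/2\leq2$, is exactly what is needed, with equality allowed), deduces $\nabla\Pi\in H^1(\R^2)\hookrightarrow L^q(\R^2)$ for every finite $q\geq2$, feeds this back into the equation to get $\Delta\Pi\in L^p$ for some finite $p>2$, and concludes $\nabla\Pi\in W^{1,p}(\R^2)\hookrightarrow L^\infty(\R^2)$ by the Morrey embedding. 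If you replace your Newtonian-potential step by this $W^{1,p}\hookrightarrow L^\infty$ bootstrap, the need for any $L^\sigma$ bound with $\sigma<2$ disappears, the endpoint $p_0=4$ is covered, and the unjustified variable-coefficient $L^q$ theory is no longer required.
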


\begin{proof}
We will derive the claimed property in two steps. First of all, by applying the divergence operator to \eqref{eq:u_second}, we see
that $\nabla\Pi$ solves, in the weak sense, the equation
\begin{equation} \label{eq:pressure}
 -\,\div\left(\frac{1}{\rho}\,\nabla\Pi\right)\,=\,\div\Big((u\cdot\nabla) u\Big)\,.
\end{equation}
By developing the derivatives on the left and on the right (more rigorously, by testing the previous equation on test-functions of the form $\rho\,\phi$,
which is possible owing to the assumed regularity of the solution $\big(\rho,u,\nabla\Pi\big)$) and using the divergence-free condition
over $u$, we find an equation (satisfied in the weak form) for $\Delta\Pi$, namely
\begin{align}
\label{eq:Delta-Pi}
-\,\Delta\Pi\,&=\,-\,\frac{1}{\rho}\,\nabla\rho\cdot\nabla\Pi\,+\,\rho\,\nabla u:\nabla u\,. 
\end{align}
As all the functions are $L^\infty$ in time, let us drop the dependence on the time variable in the argument below.

We observe that, by assumption, $\nabla\Pi\in L^2$, while both $1/\rho$ and $\nabla\rho$ belong to $L^\infty$. So the first term on the right-hand side
of relation \eqref{eq:Delta-Pi} belongs to $L^2$. Next, because $\o\in L^{p_0}\cap L^\infty$, by use of the Calder\'on-Zygmund theory we know that
\begin{equation} \label{eq:integrab_Du}
 \nabla u \,\in\,\bigcap_{q\in [p_0,+\infty[\,}L^q\,,
\end{equation}
which in particular implies that
\[
 \nabla u:\nabla u\,\in\,\bigcap_{q\in[\frac{p_0}{2},+\infty[\,}L^q\,.
\]
In turn, owing to the assumption\footnote{This is the only place of our analysis where we need the condition $p_0\leq4$.} $p_0\leq4$,
this latter property yields that also the second term $\nabla u:\nabla u$ appearing in \eqref{eq:Delta-Pi} belongs to $L^2$.

Putting all those pieces of information together, we get the property $\Delta\Pi\in L^2$, which implies (by Sobolev embeddings) that
$\nabla\Pi\in L^q$ for all $q\in[2,+\infty[\,$.
Thanks to this fact, we can go back to equation \eqref{eq:Delta-Pi} and deduce that there exists some finite $p>d=2$ such that $\Delta\Pi\in L^p$,
whence $\nabla \Pi\in W^{1,p}\hookrightarrow L^\infty$, as claimed.
\end{proof}

Proposition \ref{p:press-inf} completes the proof of Theorem \ref{th:y-exist_2}.
Thus, what is left is to show the proof of Theorem \ref{th:yudovich} concerning uniqueness of Yudovich-type solutions which, in addition, satisfy
the geometric regularity property \eqref{hyp:geom_lim}.
For this, we need an auxiliary result, concerning the time regularity of the velocity fields.
It is worth noticing that this result does not rely on the property \eqref{eq:o-bounded}; in particular, it holds true for any solution satisfying
the properties listed in Theorem \ref{th:y-exist}.

\begin{lemma} \label{l:d_tu}
Let $\big(\rho,u,\nabla\Pi)$ be a solution to system \eqref{eq:dd-E} enjoying the properties listed in
items from {\rm(i)} to {\rm(iv)} of Theorem \ref{th:y-exist}. 

Then one has $u\,\in\, W^{1,\infty}\big([0,T];L^2(\R^2)\big)$.
\end{lemma}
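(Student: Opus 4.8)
The plan is to read off the time derivative directly from the momentum equation and to verify that it belongs to $L^\infty\big([0,T];L^2(\R^2)\big)$. Since item (ii) already provides $u\in L^\infty\big([0,T];L^2(\R^2)\big)$, establishing $\d_tu\in L^\infty\big([0,T];L^2(\R^2)\big)$ immediately yields the claim $u\in W^{1,\infty}\big([0,T];L^2(\R^2)\big)$. Concretely, I would start from the form \eqref{eq:u_second} of the momentum equation, which has been shown to hold in the weak sense, and rewrite it as
\[
\d_tu\,=\,-\,(u\cdot\nabla)u\,-\,\frac{1}{\rho}\,\nabla\Pi\,.
\]
It then suffices to bound the two terms on the right-hand side uniformly in time.

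The pressure contribution is immediate. By item (i) one has $1/\rho\leq 1/\rho_*$ pointwise, so $1/\rho\in L^\infty\big([0,T]\times\R^2\big)$, while item (iv) gives $\nabla\Pi\in L^\infty\big([0,T];L^2(\R^2)\big)$. Hence the product $\rho^{-1}\nabla\Pi$ belongs to $L^\infty\big([0,T];L^2(\R^2)\big)$, with norm controlled by $\rho_*^{-1}\,\|\nabla\Pi\|_{L^\infty_T(L^2)}$.

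For the convective term I would repeat the interpolation argument already employed at the level of the approximate solutions in \eqref{est:bilin-2}. By item (iii) and Calder\'on--Zygmund theory applied to the Biot--Savart law \eqref{eq:BS-law}, the divergence-free condition $\div u=0$ gives $\nabla u\in L^\infty\big([0,T];L^{p_0}(\R^2)\big)$, with $\|\nabla u\|_{L^{p_0}}\lesssim\|\o\|_{L^{p_0}}$. Combining this with the two bounds $u\in L^\infty_T(L^2)$ and $u\in L^\infty_T(L^\infty)$ from item (ii), H\"older's inequality shows that $(u\cdot\nabla)u$ lies in $L^\infty_T(L^{q_0})$, with $1/q_0=1/2+1/p_0$, as well as in $L^\infty_T(L^{p_0})$. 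Since $p_0\in\,]2,4]$ one has $q_0<2<p_0$, so the elementary embedding $L^{q_0}\cap L^{p_0}\hookrightarrow L^2$ yields $(u\cdot\nabla)u\in L^\infty\big([0,T];L^2(\R^2)\big)$.

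Putting the two estimates together gives $\d_tu\in L^\infty\big([0,T];L^2(\R^2)\big)$, and hence the asserted regularity. There is no genuine obstacle here; the only point requiring (mild) care is the convective term, where one must resist the temptation to bound $\nabla u$ in $L^\infty$ --- which is false in this low-regularity, non-Lipschitz setting --- and instead exploit the interpolation $q_0<2<p_0$ afforded by the finite-energy assumption \textbf{(A2)} together with the $L^{p_0}$ control of the vorticity. This is exactly the mechanism behind \eqref{est:bilin-2}, now applied to the limit solution rather than to the approximating sequence.
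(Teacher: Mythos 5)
Your proof is correct and coincides with the paper's own argument: the paper likewise rewrites the momentum equation in the form \eqref{eq:u_second} and invokes the same reasoning used for \eqref{eq:d_tu_n}, i.e.\ the bound \eqref{est:bilin-2} on the convective term (via Calder\'on--Zygmund, H\"older and the interpolation $q_0<2<p_0$) together with items (i) and (iv) for the pressure contribution. Nothing is missing.
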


\begin{proof}
Thanks to the fact that $\rho(t,x)\geq\rho_*>0$ for any $(t,x)\in[0,T]\times\R^2$, we can write the momentum equation as
in \eqref{eq:u_second}, namely
\[ 
 \d_tu\,=\,-\,u\cdot\nabla u\,-\,\frac{1}{\rho}\,\nabla\Pi\,.
\] 
By using the same argument employed for studying equation \eqref{eq:d_tu_n} above, we see that the right-hand side of the previous relation
belongs to $L^\infty\big([0,T];L^2(\R^2)\big)$. This concludes the proof.
\end{proof}


\subsection{Proof of uniqueness} \label{ss:unique}

In this section we prove that, under an additional geometric regularity condition contained in assumption \eqref{hyp:geom_lim},
the previously constructed solutions are in fact unique.
A fundamental assumption is that the two (supposed \tsl{a priori} distinct) solutions pertain to the same initial datum. Therefore,
our result is \emph{not} a stability result. Despite that, we will often speak about ``stability estimates'' just to mean that we will perform
estimates for the difference of the two solutions.

Roughly speaking, our proof mimicks the proof of uniqueness in the Yudovich theorem, as presented in Chapter 8 of
\cite{Maj-Bert}. More precisely, we will perform $L^2$ stability estimates for the velocity field. in the non-homogeneous setting, however,
this will invoke a $L^2$ stability estimate for the density as well, together with a suitable estimate on the pressure function.
Let us observe that previous $L^2$ stability results for system \eqref{eq:dd-E}
(see \tsl{e.g.} Proposition 8 of \cite{D_2010}) do not apply here, as they all rely on a $L^1_T(L^\infty)$ control on $\nabla u$, which is
out of reach within our theory.

\medbreak
Thus, let us assume to have an initial datum $\big(\rho_0,u_0\big)$ satisfying assumptions \tbf{(A1)}, \tbf{(A2)} and \tbf{(AU3)}.
Let $\big(\rho_1,u_1,\nabla\Pi_1\big)$ and
$\big(\rho_2,u_2,\nabla\Pi_2\big)$ two solutions related to it, defined on $[0,T]\times\R^2$ for some positive time $T>0$,
such that (as already proven) the properties listed in items from (a) to (d) of Theorem \ref{th:y-exist_2} hold true. Moreover, we
assume that they both verify \eqref{hyp:geom_lim}.
Notice that also Lemma \ref{l:d_tu} holds true for such solutions.

\subsubsection{\tsl{A priori} estimates} \label{est:a-priori}
First of all, we need to establish some \tsl{a priori} bounds for the two triplets $\big(\rho_j,u_j,\nabla\Pi_j\big)_{j=1,2}$.
For the sake of better readability, let us drop the index $j$ in the computations below.

As postulated by condition \eqref{hyp:geom_lim}, both solutions satisfy the additional geometric
property $\d_Xu\,\in\,L^1_T(L^\infty)$, where $X=\nabla^\perp\rho$, as usual in this paper.
We are now going to establish suitable quantitative \tsl{a priori} estimates on the time interval $[0,T]$, in terms of the quantity
\begin{equation} \label{def:K_0}
M_0\,:=\,\int^T_0\left\|\d_{X(t)}u(t)\right\|_{L^\infty}\,\dt\,.
\end{equation}
Most of the arguments are similar to the computations developed in Section \ref{s:uniform}. Therefore, we will essentially list the needed properties,
while being a little bit sketchy about the proofs.

Our estimates will be derived from transport equations satisfied by the different quantities. The properties established in Proposition \ref{p:u-Zyg} and Corollary
\ref{c:flow} are thus fundamental to rigorously justify our computations.

\subsubsection*{The density}

We start by considering the mass equation. As seen in Subsection \ref{ss:limit}, the density $\rho$ belongs to $W^{1,\infty}\big([0,T]\times\R^2\big)$
and (by Proposition \ref{p:u-Zyg}) the velocity field $u$ to $L^\infty_T(\mc Z)$. From this, we easily deduce that
\begin{equation} \label{apr_est:dens}
\forall\,(t,x)\in[0,T]\times\R^2\,,\qquad\qquad 0<\rho_*\leq\rho(t,x)\leq\rho^*\,,
\end{equation}
where the two constants $\rho_*$ and $\rho^*$ are the ones identified in assumption \tbf{(A1)}.

Next, by direct differentiation (which is well-justified in our setting), we see that the vector field $X\,:=\,\nabla^\perp\rho$ satisfies the analogue
of equation \eqref{eq:X}, namely
\[
 \d_tX\,+\,u\cdot\nabla X\,=\,\d_Xu\,.
\]
From this relation and transport estimates, we infer that
\begin{equation} \label{apr_est:X}
\sup_{t\in[0,T]}\left\|\nabla\rho(t)\right\|_{L^\infty}\,\leq\,\left\|\nabla\rho_0\right\|_{L^\infty}\,+\,M_0\,,
\end{equation}
where $M_0$ is the (finite) value of the integral quantity in \eqref{def:K_0}.

\subsubsection*{The velocity field}

Let us now focus on the bounds for the velocity field $u$. First of all, notice that, by the same token used for deriving \eqref{est:bilin-2}, one has
the property $\rho (u\cdot\nabla) u\,\in\,L^\infty_T(L^2)$. Similarly, we know that $\nabla\Pi\in L^\infty_T(L^2)$. Then, thanks also to Lemma \ref{l:d_tu},
we can take the $L^2$ scalar product of the momentum equation by $u$ and get that
\begin{equation} \label{eq:d_t-kinetic}
 \frac{1}{2}\,\frac{\dd}{\dt}\int_{\R^2}\rho\,|u|^2\,\dx\,=\,0\,.
\end{equation}
Here, we have used the divergence-free property on $u$ to get rid of the pressure gradient. In addition, key cancellations of the time derivative and the 
(non-linear) transport term are obtained by using the momentum equation for $\rho$, tested against $\phi=|u|^2$. Notice that this operation
is well-justified, since (similarly as for getting \eqref{est:bilin-2} and thanks to Lemma \ref{l:d_tu})
one easily checks that $\phi\in L^\infty\big([0,T];H^1(\R^2)\big)\cap W^{1,\infty}\big([0,T];L^1(\R^2)\big)$.
After an integration in time, and making use of \eqref{apr_est:dens}, we find
\begin{equation} \label{apr_est:u-2}
\sup_{t\in[0,T]}\left\|u(t)\right\|_{L^2}\,\lesssim\,\left\|u_0\right\|_{L^2}\,.
\end{equation}

We can establish further bounds for the velocity fields and its vorticity by arguing precisely as done in Section \ref{s:uniform}. By
the analogue of estimates \eqref{est:eta-L_first} and \eqref{est:u-inf_prelim}, we easily deduce that
\begin{equation} \label{apr_est:eta}
 \sup_{t\in[0,T]}\left\|\eta(t)\right\|_{L^{p_0}\cap L^\infty}\,\lesssim\,\left\|\eta_0\right\|_{L^{p_0}\cap L^\infty}\,\exp\Big(C\,T\,+\,C\,M_0\Big)\,,
\end{equation}
where, as usual, we have defined $\eta\,:=\,\curl\big(\rho\,u\big)$ and where $C$ is a universal constant, only depending on the constants $\rho_*$,
$\rho^*$ and $\|u_0\|_{L^2}$. As already observed in Section \ref{s:uniform}, our assumptions imply that the $L^{p_0}\cap L^\infty$ norm of
$\eta_0\,=\,\curl\big(\rho_0\,u_0\big)$ is finite.

Since the following bounds will be more intricate to get, from now on we will not keep track of the precise value of the constants occurring in our
computations. We will generically denote them by $C$, tacitly meaning that they may depend on suitable norms of the initial data, as well as
on the fixed time $T>0$ and on the quantity $M_0$ defined in \eqref{def:K_0}.

Let us resume with the derivation of \tsl{a priori} estimates.
Inequality \eqref{apr_est:eta}, together with \eqref{apr_est:dens}, \eqref{apr_est:X} and \eqref{apr_est:u-2}, implies the following additional bounds for $u$
and its vorticity $\o\,=\,\curl u$, for a suitable (implicit) constant $C$:
\begin{align}
 \label{apr_est:u-inf}
 \sup_{t\in[0,T]}\left\|u(t)\right\|_{L^\infty}\,&\lesssim\,1\,, \\
\label{apr_est:omega}
\sup_{t\in[0,T]}\left\|\o(t)\right\|_{L^{p_0}\cap L^\infty}\,&\lesssim\,1\,,
\end{align}
where the latter bound is a direct consequence of the analogue of relation \eqref{est:o_n-eta_n}.

\subsubsection*{The pressure gradient}

Next, we need to establish convenient bounds for the pressure gradient. Recall that $\nabla\Pi$ solves the elliptic equation
\eqref{eq:pressure}.
Applying Lemma 2 of \cite{D_2010} and arguing as for obtaining \eqref{est:bilin-2}, we find
\begin{align*}
\frac{1}{\rho^*}\,\left\|\nabla\Pi\right\|_{L^2}\,&\lesssim\,\left\|(u\cdot\nabla) u\right\|_{L^2}\,
\lesssim\,\left\|u\right\|_{L^2\cap L^\infty}\,\left\|\nabla u\right\|_{L^{p_0}} \\
&\lesssim\,\frac{p_0^2}{p_0-1}\,\left\|u\right\|_{L^2\cap L^\infty}\,\left\|\o\right\|_{L^{p_0}}\,.
\end{align*}
Notice that, for passing from the first to the second line, we have used the optimal bound provided by Calder\'on-Zygmund theory to estimate
$\nabla u$ by the corresponding Lebesgue norm of the vorticity $\o$. Since $p_0$ is fixed here, it can be treated as a constant. Then, thanks to
\eqref{apr_est:u-2}, \eqref{apr_est:u-inf} and \eqref{apr_est:omega}, the above estimate yields
\begin{equation} \label{apr_est:Pi-2}
 \sup_{t\in[0,T]}\left\|\nabla\Pi(t)\right\|_{L^2}\,\lesssim\,1\,.
\end{equation}

This bound is not enough for our scopes: we need a control on the $L^q$ norm of $\nabla\Pi$, for $q$ large. Owing to Proposition \ref{p:press-inf},
we could in fact estimate its $L^\infty$ norm, but this bound would be more involved (and so would be the argument of the proof) and not really necessary
for deriving uniqueness (see the next paragraph). Instead, we simply establish a $L^2$ estimate on $\Delta\Pi$. For this, we use
relation \eqref{eq:Delta-Pi} to infer that
\begin{align*}
 \left\|\Delta\Pi\right\|_{L^2}\,&\lesssim\,\left\|\nabla\rho\right\|_{L^\infty}\,\left\|\nabla\Pi\right\|_{L^2}\,+\,
\rho^*\,\left\|\nabla u:\nabla u\right\|_{L^2} \\
&\lesssim\,\left\|\nabla\rho\right\|_{L^\infty}\,\left\|\nabla\Pi\right\|_{L^2}\,+\,
\left\|\nabla u\right\|_{L^{p_1}}\,\left\|\nabla u\right\|_{L^{p_0}}\,,
\end{align*}
where we have defined the integrability index $p_1\in\,]2,+\infty[\,$ such that $1/p_0\,+\,1/p_1\,=\,1/2$.
Notice that, thanks to the assumption $p_0\leq 4$, we have that $p_1\geq p_0$.

For the estimate of the first term on the last line, we can argue as for establishing \eqref{apr_est:Pi-2}: using also \eqref{apr_est:X}, we find that this term
is bounded by a constant, uniformly on $[0,T]$. So, let us focus on the second term only, and more precisely on the bound of $\nabla u$ in $L^{p_1}$.
From Calder\'on-Zygmund theory and the analogue of the second bound in \eqref{est:o_n-eta_n}, thanks to the fact that $p_1\geq p_0$ we may estimate
\begin{align*}
 \left\|\nabla u\right\|_{L^{p_1}}\,\lesssim\,\left\|\o\right\|_{L^{p_1}}\,&\lesssim\,\left\|\eta\right\|_{L^{p_1}}\,+\,\left\|u\cdot\nabla\rho\right\|_{L^{p_1}} \\
&\lesssim\,\left\|\eta\right\|_{L^{p_0}\cap L^\infty}\,+\,\left\|u\right\|_{L^{2}\cap L^\infty}\,\left\|\nabla\rho\right\|_{L^\infty}\,\lesssim\,1\,.
\end{align*}
Notice that the above (implicit) multiplicative constant depends on $p_1$, whence on $p_0$, which however is fixed in this argument.
All in all, we have proved that
\begin{equation} \label{apr_est:Delta-Pi}
 \left\|\Delta\Pi\right\|_{L^2}\,\lesssim\,1\,.
\end{equation}

As already pointed out, we observe that $H^1(\R^2)$ fails to embed in $L^\infty(\R^2)$, but it embeds into $L^p(\R^2)$ for any finite $p\geq2$. In addition,
from inequality (1.41) of \cite{BCD}, we know that
\[
\forall\,p\geq2\,,\qquad\qquad \left\|\nabla\Pi\right\|_{L^p}\,\leq\,C\,\sqrt{p}\,\left\|\nabla\Pi\right\|_{H^1}\,.
\]
Thus, putting all these pieces of information together and using \eqref{apr_est:Pi-2} and \eqref{apr_est:Delta-Pi}, we infer the inequality
\begin{equation} \label{apr_est:Pi-p}
\forall\,p\geq2\,,\qquad\qquad
 \sup_{t\in[0,T]}\left\|\nabla\Pi(t)\right\|_{L^p}\,\lesssim\,\sqrt{p}\,.
\end{equation}

\subsubsection{Stability estimates and uniqueness} \label{est:stab}

We can now prove the uniqueness of solutions, completing in this way the proof of Theorem \ref{th:yudovich}. The argument will be similar to the original one employed
in the classical Yudovich theorem, see \tsl{e.g.} Chapter 8 of \cite{Maj-Bert}.

In order to prove uniqueness, let us define
\[
 \delta\rho\,:=\,\rho_1\,-\,\rho_2\,,\qquad\qquad \delta u\,:=\,u_1\,-\,u_2\,,\qquad\qquad \nabla\delta\Pi\,=\,\nabla\Pi_1\,-\,\nabla\Pi_2\,.
\]
Simple computations yield that those quantities solve the system
\begin{equation} \label{eq:delta-eq}
 \left\{\begin{array}{l}
         \d_t\de\rho\,+\,u_1\cdot\nabla\de\rho\,=\,-\,\de u\cdot\nabla \rho_2 \\[1ex]
         \rho_1\,\d_t\de u\,+\,\rho_1\,(u_1\cdot\nabla)\de u\,+\,\nabla\de\Pi\,=\,\dfrac{\de\rho}{\rho_2}\,\nabla\Pi_2\,-\,\rho_1\,(\de u\cdot\nabla) u_2 \\[2ex]
         \div\de u\,=\,0
        \end{array}
\right.
\end{equation}
with zero initial datum (by assumption).

Similarly to what done for establishing \eqref{est:bilin-2}, and owing to Lemma \ref{l:d_tu} and the property $\nabla\Pi\in L^\infty_T(L^2)$, we see that all the terms
entering into play in the second equation above belong to $L^\infty_T(L^2)$. So, we can take the $L^2$-scalar
product of that equation by $\de u$: by using the divergence-free condition on the velocity fields and their difference, we find
\begin{align}
\label{est:du-L^2}
\frac{1}{2}\,\frac{\dd}{\dt}\int_{\R^2}\rho_1\,\left|\de u\right|^2\,\dx\,=\,\int_{\R^2}\dfrac{\de\rho}{\rho_2}\,\nabla\Pi_2\cdot\de u\,\dx\,-\,
\int_{\R^2}\rho_1\,(\de u\cdot\nabla) u_2\cdot\de u\,\dx\,.
\end{align}
Observe that, in obtaining the previous relation, we have used the mass equation for $\rho_1$, namely the relation
\[
 \d_t\rho_1\,+\,\div\big(\rho_1\,u_1\big)\,=\,0\,,
\]
in the weak form. Testing it against $\phi\,=\,\big|\de u\big|^2$ is a well-justified operation, since, precisely as done for obtaining \eqref{eq:d_t-kinetic},
one has that $\phi\in L^\infty\big([0,T];H^1(\R^2)\big)\,\cap\,W^{1,\infty}\big([0,T];L^1(\R^2)\big)$, where the latter property
follows from Lemma \ref{l:d_tu}.

Relation \eqref{est:du-L^2} asks for a $L^2$ bound on $\de\rho$. Despite each $\rho_j$ only belongs to $L^\infty$ in space, it follows from
the first equation in \eqref{eq:delta-eq} that $\de\rho$ does belong to $L^2$. Indeed, this quantity is transported by the
vector field $u_1$, which is log-Lipschitz and, as such, admits a unique flow which, in addition, is measure-preserving (owing to the divergence-free
constraint over $u_1$); moreover, the initial datum belongs to $L^2$ (in fact, it is identically $0$)
and the forcing term $-\de u\cdot\nabla\rho_2$ is in $L^\infty_T(L^2)$. Hence, $\de\rho\in L^\infty_T(L^2)$, as claimed.
Furthermore, also $u_1\cdot\nabla\de\rho$ belongs to $L^\infty_T(L^2)$, as $u_1$ belongs to this space and $\nabla\de\rho$ is bounded.
Therefore, it makes sense to take the $L^2$ scalar product of that equation by $\de\rho$: we find
\begin{equation} \label{est:drho-L^2}
 \frac{1}{2}\,\frac{\dd}{\dt}\int_{\R^2}\left|\de\rho\right|^2\,\dx\,=\,-\,\int_{\R^2}\de u\cdot\nabla\rho_2\,\de\rho\,\dx\,.
\end{equation}
We point out that relies on the cancellation
\[
 \int^t_0\int_{\R^2}u_1\cdot\nabla\left|\de\rho\right|^2\,\dx\,=\,0\,,
\]
which holds true whenever this expression makes sense, as $\div u_1=0$. Observe that this expression is indeed well-defined, because from the above considerations
one sees that $\nabla\left|\de\rho\right|^2\in L^\infty_T(L^2)$.

At this point, let us define the energy functional $E(t)$, for $t\in[0,T]$, as the quantity
\[
 E(t)\,:=\,\left\|\sqrt{\rho_1(t)}\,\de u(t)\right\|_{L^2}^2\,+\,\left\|\de\rho(t)\right\|_{L^2}^2\,.
\]
Recall that $E(0)\,=\,0$ by assumption.
Summing up equalities \eqref{est:du-L^2} and \eqref{est:drho-L^2}, we infer that
\begin{equation} \label{est:E_prelim}
 \frac{\dd}{\dt}E(t)\,\lesssim\,\left|\int_{\R^2}\dfrac{\de\rho}{\rho_2}\,\nabla\Pi_2\cdot\de u\,\dx\right|\,+\,
\left|\int_{\R^2}\rho_1\,\de u\cdot\nabla u_2\cdot\de u\,\dx\right|\,+\,\left|\int_{\R^2}\de u\cdot\nabla\rho_2\,\de\rho\,\dx\right|\,.
\end{equation}
To conclude the proof, we are now going to estimate one by one all the terms on the right-hand side of the previous relation.

Using \eqref{apr_est:dens} and 
the fact that $\nabla\rho_2\in L^\infty\big([0,T]\times\R^2\big)$, 
we easily get the following inequality:
\begin{align} \label{est:T_3}
\left|\int_{\R^2}\de u\cdot\nabla\rho_2\,\de\rho\,\dx\right|\,&\lesssim\,\left\|\nabla\rho_2\right\| _{L^\infty}\,E(t)\,.
\end{align}
Controlling the first and second terms in \eqref{est:E_prelim}, instead, requires more care,
as, under our assumptions, we do not dispose of a $L^\infty$ bound for the gradient of the velocities, nor for the pressure gradient.
Therefore, we will resort to the original argument used by Yudovich to prove his theorem (see also Chapter 8 of \cite{Maj-Bert} for details).

To begin with, we focus on the first term on the right-hand side of \eqref{est:E_prelim}.
Thanks to estimate \eqref{apr_est:Pi-p}, we can write
\begin{align*}
\left|\int_{\R^2}\dfrac{\de\rho}{\rho_2}\,\nabla\Pi_2\cdot\de u\,\dx\right|\,&\lesssim\,\left\|\nabla\Pi_2\right\|_{L^p}\,\left\|\de\rho\right\|_{L^q}\,
\left\|\sqrt{\rho_1}\,\de u\right\|_{L^2}\,,
\end{align*}
where $p$ and $q$, both belonging to $[2,+\infty[\,$, have been taken, respectively, very large and very close to $2$, so that
$1/p\,+\,1/q\,=\,1/2$. An interpolation inequality, together with estimate \eqref{apr_est:dens} for both $\rho_1$ and $\rho_2$, then yields
\begin{align*}
\left|\int_{\R^2}\dfrac{\de\rho}{\rho_2}\,\nabla\Pi_2\cdot\de u\,\dx\right|\,&\lesssim\,\left\|\nabla\Pi_2\right\|_{L^p}\,\left\|\de\rho\right\|_{L^2}^{\theta}\,
\left\|\sqrt{\rho_1}\,\de u\right\|_{L^2}\,,
\end{align*}
where we have defined $\theta\, =\, 2/q\,=\,1\,-\,2/p$. Applying inequality \eqref{apr_est:Pi-p}, we finally deduce that
\begin{align} \label{est:T_1}
 \left|\int_{\R^2}\dfrac{\de\rho}{\rho_2}\,\nabla\Pi_2\cdot\de u\,\dx\right|\,&\lesssim\,\sqrt{p}\,\Big(E(t)\Big)^{1-\frac1p}\,.
\end{align}

Treating the second term appearing in the right-hand side of \eqref{est:E_prelim} relies on similar arguments.
We recall that $\nabla u_2$ satisfies \eqref{eq:integrab_Du}, namely $\nabla u_2\in L^\infty_T(L^p)$ for all $p_0\leq p<+\infty$.
On the other hand, $\de u$ belongs to $L^2\cap L^\infty$, as each $u_j$ does. Therefore, taken $p>p_0$ large and $q$ such that $1/p\, +\, 1/q\, =\, 1/2$ as before,
we can bound
\begin{align*}
\left|\int_{\R^2}\rho_1\,\de u\cdot\nabla u_2\cdot\de u\,\dx\right|\,\leq\,\left\|\nabla u_2\right\|_{L^p}\,\left\|\sqrt{\rho_1}\,\de u\right\|_{L^q}\,
\left\|\sqrt{\rho_1}\,\de u\right\|_{L^2}\,.
\end{align*}
Using the same interpolation argument as above, the bound \eqref{apr_est:u-inf} for $u_1$ and $u_2$ in $L^\infty$ and the Calder\'on-Zygmund estimate
\[
\forall\,p\in\,]1,+\infty[\,,\qquad\qquad \left\|\nabla u\right\|_{L^p}\,\lesssim\,\frac{p^2}{p-1}\,\left\|\o\right\|_{L^p}\,,
\]
in turn we find, thanks to \eqref{apr_est:omega}, the inequality
\begin{align} \label{est:T_2}
\left|\int_{\R^2}\rho_1\,\de u\cdot\nabla u_2\cdot\de u\,\dx\right|\,\leq\,\frac{p^2}{p-1}\,\Big(E(t)\Big)^{1-\frac1p}\,.
\end{align}

It is now time to gather estimates \eqref{est:T_3}, \eqref{est:T_1} and \eqref{est:T_2} together and put them into \eqref{est:E_prelim}: as $p>p_0$
is very large, we deduce that
\begin{align*}
 \frac{\dd}{\dt}E(t)\,\leq\,C_1\,E(t)\,+\,C_2\,p\,\Big(E(t)\Big)^{1-\frac1p}\,,
\end{align*}
for any $t\in[0,T]$, for two universal positive constants $C_1$ and $C_2$, possibly depending on the norms of the initial datum $\big(\rho_0,u_0\big)$
and on the quantity $M_0$ defined in \eqref{def:K_0}.

This bound having been obtained, the rest of the argument is very similar to \tsl{e.g.} the proof of Theorem 8.2 of \cite{Maj-Bert}:
the presence of the linear term $C_1\,E(t)$ on the right-hand
side of the previous estimate represents only a minor difference. Indeed, we can define the time $T_0$ as
\[
 T_0\,:=\,\sup\left\{t\in[0,T]\;\Big|\quad E(t)\,\leq\,\frac{1}{2}\right\}\,.
\]
Observe that $E(0)=0$; in addition, an inspection of the equation for $\de\rho$ in system \eqref{eq:delta-eq} and Lemma \ref{l:d_tu}
imply that $E$ is a continuous quantity in time. Therefore, $T_0$ is well-defined and one has $T_0>0$.
Then, on the (possibly smaller, as $T_0\leq T$ in principle) time interval $[0,T_0]$, we have
\begin{equation} \label{est:dE-dt}
 \frac{\dd}{\dt}E(t)\,\leq\,K\,p\,\Big(E(t)\Big)^{1-\frac1p}\,,
\end{equation}
where we have set $K\,:=\,C_1+C_2$. Despite the previous inequality has not a unique solution for $p$ fixed, it does possess a maximal solution
\[
 E_{\max}(t)\,=\,\Big(K\,t\Big)^p\,,
\]
so that any other solution of \eqref{est:dE-dt} satisfies $E(t)\,\leq\,E_{\max}(t)$ for all $t\geq0$. See also Lemma B.2 of \cite{A-B-C-DL-G-J-K}
for more details.
At this point, we define the time $T_1$ such that
\[
 T_1\,:=\,\min\left\{T_0\,,\,\frac{1}{2K}\right\}\,.
\]
Then, from \eqref{est:dE-dt} and the previous considerations we deduce the inequality 
\[
\sup_{t\in[0,T_1]} E(t)\,\leq\,\Big(K\,T_1\Big)^p\,\leq\,2^{-p}\,.
\]
At this point, sending $p\to+\infty$ we get $E\equiv0$ on $[0,T_1]$. From this, it is then a routine matter to deduce that
$E\equiv 0$ on the whole time interval $[0,T]$, which in turn implies the sought uniqueness of solutions.

This completes the proof of Theorem \ref{th:yudovich}.


%


\addcontentsline{toc}{section}{References}
{\small

}


\begin{thebibliography}{xxx}

\bibitem{A-B-C-DL-G-J-K} D. Albritton, E. Bru\'e, M. Colombo, C. De Lellis, V. Giri, M. Janisch, H. Kwon:
{\it ``Instability and non-uniqueness for the 2D Euler equations, after M. Vishik''}.
Ann. of Math. Stud., {\bf 219 }, Princeton University Press, Princeton, NJ, 2024.

\bibitem{A-T} P. Auscher, P. Tchamitchian:
{\it ``Square root problem for divergence operators and related topics''}.
Ast\'eriques, {\bf 249}, 1998.

\bibitem{BCD} H. Bahouri, J.-Y. Chemin, R. Danchin:
{\it ``Fourier analysis and nonlinear partial differential equations''}.
Grundlehren der mathematischen Wissenschaften [Fundamental Principles of Mathematical Sciences], {\bf 343}. Springer, Heidelberg, 2011.

\bibitem{Bern-K} F. Bernicot, S. Keraani:
{\it On the global well-posedness of the 2D Euler equations for a large class of Yudovich type data}.
Ann. Sci. \'Ec. Norm. Sup\'er. (4), {\bf 47} (2014), n. 3, 559-576.

\bibitem{Bern-Hm} F. Bernicot, T. Hmidi:
{\it On the global well-posedness for Euler equations with unbounded vorticity}.
Dyn. Partial Differ. Equ., {\bf 12} (2015), n. 2, 127-155.

\bibitem{Brav-F} M. Bravin, F. Fanelli:
{\it Global existence for non-homogeneous incompressible inviscid fluids in presence of Ekman pumping}.
Commun. Contemp. Math., accepted for publication (2025).

\bibitem{Chae} D. Chae:
{\it Weak solutions of 2-D Euler equations with initial vorticity in $L(\log L)$}.
J. Differential Equations, {\bf 103} (1993), n. 2, 323-337.

\bibitem{Ch_1991} J.-Y. Chemin:
{\it Sur le mouvement des particules d'un fluide parfait incompressible bidimensionel}.
Invent. Math., {\bf 103} (1991), n. 3, 599-629.

\bibitem{Ch_1993} J.-Y. Chemin:
{\it Persistance des structures g\'eom\'etriques li\'ees aux poches de tourbillon}.
Ann. Sci. \'Ec. Norm. Sup\'er., {\bf 26} (1993), n. 4, 1-26.

\bibitem{Ch_1995} J.-Y. Chemin:
{\it ``Fluides parfaits incompressibles''}.
Ast\'erisque, {\bf 230}, 1995.

\bibitem{C-M-Z} Q. Chen, C. Miao, X. Zheng:
{\it The two-dimensional Euler equations in Yudovich and bmo-type spaces}.
Rev. Mat. Iberoam., {\bf 35} (2019), n. 1, 195-240.

\bibitem{Co-D-E} P. Constantin, T. Drivas, T. M. Elgindi:
{\it Inviscid limit of vorticity distributions in the Yudovich class}.
Comm. Pure Appl. Math., {\bf 75} (2022), n. 1, 60-82.

\bibitem{Co-Wu_95} P. Constantin, J. Wu:
{\it Inviscid limit for vortex patches}.
Nonlinearity, {\bf 8} (1995), n. 5, 735-742.

\bibitem{Co-Wu_96} P. Constantin, J. Wu:
{\it The inviscid limit for non-smooth vorticity}.
Indiana Univ. Math. J., {\bf 45} (1996), n. 1, 67-81.

\bibitem{Crippa-Stef} G. Crippa, G. Stefani:
{\it An elementary proof of existence and uniqueness for the Euler flow in localized Yudovich spaces}.
Calc. Var. Partial Differential Equations, {\bf 63} (2024), n. 7, Paper n. 168.

\bibitem{D_2006} R. Danchin:
{\it The inviscid limit for density-dependent incompressible fluids}.
Ann. Fac. Sci. Toulouse Math. (6), {\bf 15} (2006), n. 4, 637-688.

\bibitem{D_2010} R. Danchin:
{\it On the well-posedness of the incompressible density-dependent Euler equations in the $L^p$ framework}.
J. Differential Equations, {\bf 248} (2010), n. 8, 2130-2170.

\bibitem{D:F} R. Danchin, F. Fanelli:
{\it The well-posedness issue for the density-dependent Euler equations in endpoint Besov spaces}.
J. Math. Pures Appl. (9), {\bf 96} (2011), n. 3, 253-278.

\bibitem{DiP-M} R. J. DiPerna, A. J. Majda:
{\it Concentrations in regularizations for 2-D incompressible flows}.
Comm. Pure Appl. Math., {\bf 40} (1987), n. 3, 301-345.

\bibitem{F_2012} F. Fanelli:
{\it Conservation of geometric structures for non-homogeneous inviscid incompressible fluids}.
Comm. Partial Differential Equations, {\bf 37} (2012), n. 9, 1553-1595.

\bibitem{F_2025} F. Fanelli:
{\it Geometric blow-up criteria for the non-homogeneous incompressible Euler equations in 2-D}.
Submitted (2025), arxiv preprint \texttt{2502.10024}.


\bibitem{F-N} E. Feireisl, A. Novotn\'y:
{\it ``Singular limits in thermodynamics of viscous fluids''}.
Adv. Math. Fluid Mech., Birkh\"auser Verlag, Basel, 2009.

\bibitem{L-Miot-W} C. Lacave, E. Miot, C. Wang:
{\it Uniqueness for the two-dimensional Euler equations on domains with corners}.
Indiana Univ. Math. J., {\bf 63} (2014), n. 6, 1725-1756.

\bibitem{Maj-Bert} A. J. Majda, A. L. Bertozzi:
{\it ``Vorticity and incompressible flow''}.
Cambridge Texts Appl. Math., {\bf 27}. Cambridge University Press, Cambridge, 2002.

\bibitem{Marc-Pulv} C. Marchioro, M. Pulvirenti:
{\it ``Mathematical theory of incompressible nonviscous fluids''}.
Appl. Math. Sci., {\bf 96}. Springer-Verlag, New-York, 1994.

\bibitem{Tan} Y. Taniuchi:
{\it Uniform local $L^p$ estimate for 2-D vorticity equation and its application to Euler equations with initial vorticity in \texttt{bmo}}.
Comm. Math. Phys., {\bf 248} (2004), n. 1, 169-186.

\bibitem{Vish} M. Vishik:
{\it Incompressible flows of an ideal fluid with vorticity in borderline spaces of Besov type}.
Ann. Sci. \'Ecole Norm. Sup\'er. (4), {\bf 32} (1999), n. 6, 769-812.

\bibitem{V-1} M. Vishik:
{\it Instability and non-uniqueness in the Cauchy problem for the Euler equations of an ideal incompressible fluid. Part I}.
Submitted (2018), arxiv preprint \texttt{1805.09426}.

\bibitem{V-2} M. Vishik:
{\it Instability and non-uniqueness in the Cauchy problem for the Euler equations of an ideal incompressible fluid. Part II}.
Submitted (2018), arxiv preprint \texttt{1805.09440}.

\bibitem{Yud_1963} V. I. Yudovich:
{\it Non-stationary flows of an ideal incompressible fluid}.
\c{Z}. Vy\v{c}isl. Mat i Mat. Fiz., {\tbf 3} (1963), 1032-1066.

\bibitem{Yud_1995} V. I. Yudovich:
{\it Uniqueness theorem for the basic nonstationary problem in the dynamics of an ideal incompressible fluid}.
Math. Res. Lett., {\bf 2} (1995), n. 1, 27-38.


\end{thebibliography}
\end{document}